\newtheorem*{theorem*}{Theorem}
\newtheorem{lemma}{Lemma}[section]
\numberwithin{equation}{section}
\newtheorem{proposition}[lemma]{Proposition}
\newtheorem{remark}[lemma]{Remark}
\newtheorem{theorem}[lemma]{Theorem}
\newtheorem{corollary}[lemma]{Corollary}
\sloppy \theoremstyle{plain}
\newcommand{\End}{\operatorname{End}}
\newcommand{\Hom}{\operatorname{Hom}}
\newcommand{\Bc}{\operatorname{Bc}}
\newcommand{\Irr}{\operatorname{Irr}}
\newcommand{\Ind}{\operatorname{Ind}}
\newcommand{\Rep}{\operatorname{Rep}}
\newcommand{\Res}{\operatorname{Res}}
\newcommand{\res}{\operatorname{res}}
\newcommand{\St}{\operatorname{St}}
\newcommand{\C}{\mathbb C}
\newcommand{\F}{\mathbb F}
\newcommand{\Gal}{\operatorname{Gal}}
\newcommand{\GL}{\operatorname{GL}}
\newcommand{\GSp}{\operatorname{GSp}}
\newcommand{\Mp}{\operatorname{Mp}}
\newcommand{\SL}{\operatorname{SL}}
\newcommand{\Sp}{\operatorname{Sp}}
\newcommand{\SU}{\operatorname{SU}}
\newcommand{\U}{\operatorname{U}}
\newcommand{\GO}{\operatorname{GO}}
\newcommand{\Ha}{\operatorname{H}}
\newcommand{\Orbit}{\operatorname{Orbit}}
\newcommand{\diag}{\operatorname{diag}}
\newcommand{\id}{\operatorname{Id}}
\newcommand{\nnn}{\operatorname{N}}
\newcommand{\stab}{\operatorname{Stab}}
\newcommand{\Stab}{\operatorname{Stab}}
\newcommand{\supp}{\operatorname{supp}}
\newcommand{\tr}{\operatorname{Tr}}
\newcommand{\Tr}{\operatorname{Tr}}
\begin{document}

\title{On a question of Drinfeld on the Weil representation: the finite field case}
\date{September  2013}
\keywords{Weil representation, base change, Shintani lift}

\author{Chun-Hui Wang }
\address{NCMIS, Academy of Mathematics and Systems Science\\
Chinese Academy of Sciences\\
Beijing, 100190, P.R. China}
\email{cwang@amss.ac.cn}

\subjclass[2000]{11F27, 20C33 (Primary)}

\maketitle
\setcounter{tocdepth}{5}
\begin{abstract}
Let $F$ be a finite field of odd cardinality, and let $G= \GL_2(F)$. The group $G \times G \times G$ acts on $F^2 \otimes F^2 \otimes F^2$ via symplectic similitudes, and has a natural Weil representation. Answering a question raised by V. Drinfeld, we decompose that representation into irreducibles. We also decompose the analogous representation of $\GL_2(A)$, where $A$ is a cubic algebra over $F$.
\end{abstract}
\section*{Introduction}
Let $F$ be a finite field of  \emph{odd} cardinality $q$, and let $W$, $\langle, \rangle$  be a symplectic vector space over $F$ of dimension $2n$. The Heisenberg group $\Ha(W)$, attached to $W$ and $F$, is a set $W\oplus F$ with the group law: $(w,t) (w',t')=(w+w', t+t' + \frac{\langle w, w'\rangle}{2}).$
Let $\Sp(W)$ be the isometry group of $(W, \langle, \rangle)$. Define a semi-direct product group $\Ha(W) \rtimes \Sp(W)$ by
$[(w,t), g ]\cdot [(w',t'), g' ]= [(w ,t)+ (g \cdot w',t'), gg'].$
Fix  a non-trivial character $\psi$ of $F$. According to the Stone-Von Neumann theorem, there is only one equivalence class of irreducible complex representation $\omega_{\psi}$ of $\Ha(W)$ with central character $\psi$. By Weil's celebrated paper \cite{Weil}, in fact $\omega_{\psi}$ is a representation of $\Ha(W)\rtimes\Sp(W)$ in the finite field case. The restriction of $\omega_{\psi}$ to $\Sp(W)$, now is well-known as the \emph{Weil representation}; in \cite{Gero}, G\'erardin investigated fully this representation. Following Shinoda \cite{Shin},   we extend it to the  symplectic similitude group $\GSp(W)$ by setting $\rho=\Ind_{\Sp(W)}^{\GSp(W)} \omega_{\psi}$, which  does not depend on the choice of $\psi$ (\cite[p. 270, Theorem]{Shin}). \\

The initial question raised by V.Drinfeld, in the finite field case, is understood roughly  in the following way.     Let $F^2$, $\langle, \rangle$  be  a   symplectic space over $F$ of dimension $2$. Consider now  $W=F^2 \otimes F^2 \otimes F^2$,  a symplectic vector space over $F$ of dimension $8$  endowed  with the symplectic form $\langle, \rangle_{F^2}\otimes \langle, \rangle_{F^2} \otimes \langle, \rangle_{F^2}$. So there is  a map from $\GL_2(F) \times \GL_2(F) \times \GL_2(F)$ to $\GSp(W)$. In this way we can define a Weil representation $\pi$ for $\GL_2(F)\times \GL_2(F) \times \GL_2(F)$ via the restriction of $\rho$, where   $\rho$ is  the Weil representation of $\GSp(W)$.  The question is asked about the set of  the quotients of $\pi$. Does it contain the representations of the form $\sigma\otimes \sigma \otimes \sigma$ for any irreducible representation $\sigma$ of $\GL_2(F)$? In this paper, we answer this question and also consider its variant version. To be  precise, suppose now that $E/F$ (resp. $K/F$) is  a  field extension of degree 2 (resp. 3). Take $A$ to be an \'etale algebra over $F$ of degree $3$, so $A$ is isomorphic to one of the algebras $F \times F \times F, F \times E, K$.   We shall construct a homomorphism from $\GL_2(A)$ to $\GSp_8(F)$.
 If $A=F \times F \times F$, then  $\GL_2(A)\simeq \GL_2(F) \times \GL_2(F) \times \GL_2(F)$. This goes back to Drinfeld's question.  If $A=F \times E$, then $\GL_2(A)\simeq \GL_2(F) \times \GL_2(E) $. By Weil's Galois descent, we construct a quadratic vector space $M=\{\begin{pmatrix}
x  &  \alpha \\
 \overline{\alpha} & y
\end{pmatrix} | x,y \in F, \alpha \in E\}$ over $F$ of dimension $4$, with the quadratic form $Q$ defined by the determinant of the  matrix. Clearly there is a map from $\GL_2(E)$ to $\GO(Q)$, which is defined by $h\cdot m=hm\overline{h}^t$, where   $h\in \GL_2(E), m\in M$ and $\overline{h}^t$ is the conjugate transpose  of $h$. So $F^2 \otimes M$ is a symplectic vector space over $F$ of dimension $8$, and there is a map from $\GL_2(F) \times \GL_2(E)$ to $\GSp_8(F)$. If $A=K$, in this situation, we also need to use  Weil's Galois descent to construct a map from $\GL_2(K)$ to  $\GSp_8(F)$.
 The map from $\GL_2(A)$ to $\GSp_8(F)$ leads us to define a representation $\pi_A$ of $\GL_2(A)$ via the restriction of $\rho$. The main purpose of this paper  is  to obtain the complete decomposition of  $\pi_A$ in each case.  \\

 For the group $G=\GL_2(F)$, we write   $1_G$ for the trivial representation of $G$, and $\St_G$ for the Steinberg representation of $G$. Let  $T=\{\begin{pmatrix}
  a& 0\\
  0&d
\end{pmatrix} \in G \}$,  $B=\{\begin{pmatrix}
  a& b\\
  0&d
\end{pmatrix} \in G \}$.   Let $\chi_1\otimes \chi_2$ be the character of  $T$ defined by $\begin{pmatrix}
  a& 0\\
  0&d
\end{pmatrix}  \longmapsto \chi_1(a) \chi_2(d)$ for  two characters $\chi_1, \chi_2$ of $F^{\times}$. We will denote  the principal series representation  $\Ind_{B}^G (\chi_1 \otimes \chi_2)$ of $G$ by $\pi_{\chi_1, \chi_2}$.
 If $(\sigma, V)$ is a representation of $G$ and $\psi$  a character of $F^{\times}$, we write  the $\psi\cdot \sigma$ for the representation
 $\psi\cdot \sigma(g)=\psi(\det\,g) \sigma(g).$
  Let $\Irr(G)$ denote  the class of all irreducible complex representations of the group $G$.  Let $L$ be a field extension of $F$. By  Shintani's work \cite{Shint}, one knows that  there exists the base-change map $\Bc_{L/F}: \Irr(\GL_2(F)) \longrightarrow  \Irr(\GL_2(L))$,  which is determined by  character equalities.  Our main results may be formulated as follows:

\begin{theorem*}[1]
If $A=F \times F \times F$, $\GL_2(A)\simeq \GL_2(F) \times \GL_2(F) \times \GL_2(F)$, then
$$ \pi_A\simeq \bigoplus_{\sigma \in \Irr(\GL_2(F))} \sigma \otimes \sigma\otimes \sigma
\oplus \bigoplus_{\psi\in \Irr(F^{\times})}\Bigg(
(\psi\!\cdot\!\St_{\GL_2(F)}\!\otimes\!\psi\!\cdot\!1_{\GL_2(F)}\!\otimes\!\psi\!\cdot\!1_{\GL_2(F)})
\,\oplus\, (\psi\!\cdot\!1_{\GL_2(F)}\!\otimes\!\psi\!\cdot\!\St_{\GL_2(F)}\!\otimes\!\psi\!\cdot\!1_{\GL_2(F)})
\,\oplus\, (\psi\!\cdot\!1_{\GL_2(F)} \!\otimes\!\psi\!\cdot\!1_{\GL_2(F)}\!\otimes\!\psi\!\cdot\!\St_{\GL_2(F)})\Bigg).$$
\end{theorem*}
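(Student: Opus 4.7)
The plan is to exploit the hidden dual-pair structure of $W=F^2\otimes F^2\otimes F^2$. Using the classical identification $F^2\otimes F^2\simeq M_2(F)$ under which the tensor of two symplectic forms becomes the quadratic form $\det$, one rewrites $W=F^2\otimes M_2(F)$, so that the embedding $\GL_2(F)^3\hookrightarrow\GSp(W)$ factors through the similitude dual pair $(\GL_2,\GO(M_2,\det))$. The last two copies of $\GL_2(F)$ embed in $\operatorname{GSO}(M_2,\det)\simeq(\GL_2(F)\times\GL_2(F))/\{(zI,z^{-1}I):z\in F^\times\}$ via $(g_2,g_3)\cdot X=g_2Xg_3^{t}$, reducing the problem to the theta correspondence from $\GL_2$ to split $\GO_4$.

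Next, invoking Howe duality for this similitude dual pair, the Weil representation $\rho$, restricted to $\GL_2(F)\times\GO(M_2,\det)$, decomposes as $\bigoplus_{\sigma\in\Irr(\GL_2(F))}\sigma\boxtimes\Theta(\sigma)$. For a generic $\sigma$ (an irreducible principal series or a cuspidal representation), central-character compatibility together with the identification $\operatorname{GSO}_4\sim\GL_2\times\GL_2$ forces $\Theta(\sigma)$ to pull back to $\sigma\boxtimes\sigma$ on the last two $\GL_2(F)$'s, contributing the diagonal summand $\sigma\otimes\sigma\otimes\sigma$. For a one-dimensional character $\sigma=\psi\cdot 1_G$, the analogous analysis runs into the reducibility of $\Ind_B^G(\psi\otimes\psi)=\psi\cdot 1_G\oplus\psi\cdot\St_G$, and the theta lift picks up the extra Steinberg constituent on one factor of $\operatorname{GSO}_4$; symmetry across the three positions in $W$ then produces the three summands $(\St,1,1)$, $(1,\St,1)$, $(1,1,\St)$ appearing in the statement.

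The identification is then confirmed character-theoretically. Using G\'erardin's explicit formula for $\chi_{\omega_\psi}$ on $\Sp(W)$ together with the induction formula for $\rho=\Ind_{\Sp(W)}^{\GSp(W)}\omega_\psi$, one computes $\chi_{\pi_A}(g_1,g_2,g_3)$, which vanishes unless $\det g_1\det g_2\det g_3=1$; comparison with the character of the right-hand side then proceeds conjugacy-class by conjugacy-class, the diagonal part being handled by the standard orthogonality identities for triple products of characters. I expect the main obstacle to lie in the rigorous extraction of the three Steinberg corrections: these do not arise from the naive theta lift, but from a careful analysis of how the parametrization of the principal series $\pi_{\chi_1,\chi_2}$ degenerates as $\chi_1\to\chi_2$. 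As a sanity check, $\sum_{\sigma}(\dim\sigma)^3+3q(q-1)=(q-1)q^4=\dim\pi_A$, so once multiplicity one is established for each summand the decomposition follows.
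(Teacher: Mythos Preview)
Your framework is correct and in fact coincides with the paper's: the identification $W\simeq F^2\otimes M_2(F)$ and the resulting map $\GL_2(F)\times\GL_2(F)\to\GO(M_2,\det)$ via $(g_2,g_3)\cdot m=g_2 m g_3^{t}$ is exactly how the paper sets up the problem (its Proposition~\ref{Mode2}). The dimension check $(q-1)q^4$ is also fine. But the proposal remains a plan rather than a proof, and it contains a genuine gap at the decisive step.

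The gap is the assertion that ``central-character compatibility together with the identification $\operatorname{GSO}_4\sim\GL_2\times\GL_2$ forces $\Theta(\sigma)$ to pull back to $\sigma\boxtimes\sigma$.'' Central-character compatibility only tells you that if $\tau_1\boxtimes\tau_2$ occurs then $\omega_{\tau_1}=\omega_{\tau_2}=\omega_\sigma$; it does not single out $\tau_1=\tau_2=\sigma$ among the many pairs with that central character, nor does it give multiplicity one. Over finite fields there is no off-the-shelf Howe-duality theorem for similitude pairs that you can cite here; the content of such a statement \emph{is} the theorem you are trying to prove. Likewise, the character-theoretic ``confirmation'' you sketch is not carried out, and you yourself flag the Steinberg corrections as the expected obstacle.

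What the paper actually does is make the theta-lift computation explicit. For each $\pi_1\otimes\pi_2\in\Irr_0(G\times G)$ it realizes the isotypic space $W[\pi_1,\pi_2]$ as functions on $M_2(F)\times X_F$ satisfying an equivariance condition, breaks $M_2(F)\times X_F$ into three $H$-orbits (invertible, rank one, zero), computes the stabilizers, and reads off $\dim W[\pi_1,\pi_2]$ (its Lemma~\ref{dimension} and Corollary~\ref{corollary1}). This already shows $W[\pi_1,\pi_2]\neq 0$ only for the pairs $(\sigma,\sigma)$ and $(\psi\cdot 1_G,\psi\cdot\St_G)$, $(\psi\cdot\St_G,\psi\cdot 1_G)$. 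The final identification of $W[\pi_1,\pi_2]$ as a $G$-module is then obtained not by further computation but by Clifford theory: since $\pi$ extends to $(G^3)\rtimes S_3$, the set of constituents is $S_3$-stable, and matching the known dimensions against the three possible $S_3$-orbit types forces the answer. If you want to salvage your approach, you should replace the appeal to ``central-character compatibility'' by this orbit/stabilizer computation, or else actually carry out the character comparison you describe; the $S_3$-symmetry argument is the cleanest way to finish.
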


\begin{theorem*}[2]
If $A=F \times E$, $\GL_2(A) \simeq \GL_2(F) \times \GL_2(E)$, then
$$ \pi_A\simeq\bigoplus_{\sigma \in \Irr(\GL_2(F))} \sigma \otimes \Bc_{E/F}(\sigma)\oplus \bigoplus_{\psi\in \Irr(F^{\times}), \Psi\in \Irr(E^{\times}),\Psi=\psi\circ\nnn_{E/F}} \Big( \psi\!\St_{\GL_2(F)}\!\otimes\Psi\!\cdot\!1_{\GL_2(E)}\Big).$$
\end{theorem*}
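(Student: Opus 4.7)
The plan is to realise $\pi_A$ as (part of) a theta lift for the reductive dual pair $(\GSp(F^2),\GO(M))$ inside $\GSp(F^2\otimes M)=\GSp_8(F)$, and then to translate the decomposition into a statement about $\GL_2(E)$ via the isogeny $h\mapsto (m\mapsto hm\overline{h}^t)$. As a preliminary, I verify that this map induces an isomorphism $\GL_2(E)/E^1\xrightarrow{\sim}\operatorname{GSO}(M)$, where $E^1=\ker\nnn_{E/F}$ is the norm-one subgroup, with similitude factor $\nnn_{E/F}\circ\det$, and that $\GO(M)=\operatorname{GSO}(M)\rtimes\Gal(E/F)$ with the non-trivial Galois element acting by entrywise conjugation. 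This transports every $\GO(M)$-representation to a $\GL_2(E)$-representation whose central character factors through $\nnn_{E/F}$, already matching the condition $\Psi=\psi\circ\nnn_{E/F}$ in the statement.

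Next, I apply the finite-field Howe/Shimizu-style decomposition
\begin{equation*}
\rho\big|_{(\GL_2(F)\,\times\,\GO(M))_0}\;\simeq\;\bigoplus_{\sigma\in\Irr(\GL_2(F))}\sigma\otimes\Theta(\sigma),
\end{equation*}
where the subscript $0$ denotes the subgroup of pairs with matched similitudes and $\Theta(\sigma)$ is the big theta lift. This decomposition is accessible via G\'erardin's explicit formulas for $\omega_\psi$ together with Shinoda's extension to $\GSp$, which reduces the problem to a direct character computation on the two sides of the pair. Pulled back along the isogeny above, $\Theta(\sigma)$ becomes a $\GL_2(E)$-representation; comparing its character on $\Gal(E/F)$-stable maximal tori against Shintani's characterization of base change by character identities forces $\Theta(\sigma)|_{\GL_2(E)}\simeq \Bc_{E/F}(\sigma)$ whenever $\sigma$ is cuspidal or an irreducible principal series, which accounts for the main sum $\bigoplus_\sigma\sigma\otimes\Bc_{E/F}(\sigma)$.

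The main obstacle, and the source of the exceptional summands, is the behaviour of $\Theta(\sigma)$ at the reducible points of principal series. For the two constituents $\sigma=\psi\cdot 1_{\GL_2(F)}$ and $\sigma=\psi\cdot\St_{\GL_2(F)}$ of $\Ind_B^{\GL_2(F)}(\psi\otimes\psi)$, a naive extrapolation of the identity $\Theta(\sigma)|_{\GL_2(E)}=\Bc_{E/F}(\sigma)$ delivers only $\psi\cdot 1\otimes\Psi\cdot 1$ and $\psi\cdot\St\otimes\Psi\cdot\St$; however, at these degenerate points one of the two lifts is genuinely reducible and absorbs an extra copy of the one-dimensional $\Psi\cdot 1_{\GL_2(E)}$. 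A dimension count (or Frobenius reciprocity against the induced representation on the orthogonal side) shows that the extra copy pairs with $\psi\cdot\St_{\GL_2(F)}$ rather than $\psi\cdot 1_{\GL_2(F)}$, yielding exactly the cross-term $\psi\cdot\St\otimes\Psi\cdot 1$ in the statement. Making this multiplicity count precise — most cleanly via a direct computation of $\dim\Hom_{\GL_2(F)\times\GL_2(E)}(\sigma\otimes\tau,\pi_A)$ using the Weil character formula together with the known characters of the principal-series constituents on both sides — is the technical heart of the argument.
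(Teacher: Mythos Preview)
Your proposal recasts the problem as a similitude theta lift for the pair $(\GL_2(F),\GO(M))$ and then identifies the lift with base change via character identities. This is a genuinely different route from the paper's. The paper never invokes Howe duality or Shintani's character relations as inputs; instead, following Andrade, it fixes $\pi_1\in\Irr(\GL_2(E))$, realises the multiplicity space $W[\pi_1]$ as a space of $H$-equivariant $V_1$-valued functions on $M\times X_F$, computes $\dim W[\pi_1]$ by an orbit--stabiliser analysis (three $H$-orbits, with stabilisers $U_2(E)$, a Borel-type subgroup, and a determinant-kernel subgroup), and then identifies each nonzero $W[\pi_1]$ as a $\GL_2(F)$-module by hand: via Borel traces and the action of $\omega$ for the twists of $1_H$ and $\St_H$, via a Borel-fixed line for the Galois-fixed principal series, and via an explicit intertwining operator into the cuspidal model for the case $\Lambda=\Sigma^q$. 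The base-change interpretation is read off only at the end, by inspection.

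Your framework is more conceptual, but two points need care before it becomes a proof. First, $\pi_A$ is the restriction of $\rho=\Ind_{\Sp}^{\GSp}\omega_\psi$ to the full image of $\GL_2(F)\times\GL_2(E)\to\GL_2(F)\times\operatorname{GSO}(M)$, with no similitude matching; your decomposition on $(\GL_2(F)\times\GO(M))_0$ must be translated (via Mackey, or by working directly with $\rho$) before it says anything about $\pi_A$, and you must also account for the fact that $\GL_2(E)$ sees only $\operatorname{GSO}(M)$, not $\GO(M)$. Second, and more substantively, the steps you label ``direct character computation'' and ``dimension count'' are precisely the content of the paper's \S\S3.3--3.6: identifying $\Theta(\sigma)|_{\GL_2(E)}$ with $\Bc_{E/F}(\sigma)$ and isolating the extra $\psi\cdot\St_G\otimes\Psi\cdot 1_H$ require real work (in the paper, several pages, including a delicate Gauss-sum manipulation for the cuspidal case). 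Your sketch defers rather than circumvents that labour.
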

\begin{theorem*}[3]
If $A=K$, $\GL_2(A)\simeq\GL_2(K)$, then
\begin{displaymath}
\pi_A\simeq\bigoplus_{\sigma \in \Irr(\GL_2(F))} \Bc_{K/F}(\sigma).
\end{displaymath}
\end{theorem*}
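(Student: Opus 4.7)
The plan is to prove the theorem by a character computation, showing that each $\Bc_{K/F}(\sigma)$ occurs in $\pi_A$ with multiplicity exactly one and that no other irreducibles of $\GL_2(K)$ appear. The first task is to make the map $\GL_2(K)\to\GSp_8(F)$ explicit via Weil's Galois descent. Let $\tilde K/F$ be the Galois closure of $K/F$ and $\tau_1,\tau_2,\tau_3\colon K\hookrightarrow\tilde K$ the three $F$-embeddings; identify $V=F^2\otimes F^2\otimes F^2$ with the $\Gal(\tilde K/F)$-invariants in $V_{\tilde K}=\tilde K^2\otimes\tilde K^2\otimes\tilde K^2$, where the Galois group acts jointly on scalars and by permuting the three tensor factors through its action on $\{\tau_1,\tau_2,\tau_3\}$. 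An element $g\in\GL_2(K)$ acts on $V_{\tilde K}$ via the twisted-diagonal operator $\tau_1(g)\otimes\tau_2(g)\otimes\tau_3(g)$; this commutes with the Galois action, descends to an $F$-linear action on $V$, and scales the symplectic form by $\nnn_{K/F}(\det g)$, so the image lies in $\GSp_8(F)$.

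Next, for regular semisimple $g\in\GL_2(K)$ I would evaluate $\chi_{\pi_A}(g)=\chi_\rho(g)$ using G\'erardin's explicit Weil-character formula for $\GSp$: the value depends on the similitude factor, a quadratic sign coming from a Witt-class invariant of $g$ acting on $V$, and a power of $q$ recording the dimension of the $g$-fixed locus on $V$. The fixed-point data on $V_{\tilde K}$ are controlled by the eigenvalues of $g$ on $K^2$ together with their Galois conjugates, which is precisely the information captured by the Shintani norm map $\nnn\colon\GL_2(K)\to\GL_2(F)$. The target identity is
$$\chi_{\pi_A}(g)\;=\;\sum_{\sigma\in\Irr(\GL_2(F))}\chi_\sigma\bigl(\nnn(g)\bigr)$$
for $g\in\GL_2(K)$ regular semisimple with $\nnn(g)$ regular semisimple in $\GL_2(F)$. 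Applying Shintani's defining relation $\chi_{\Bc_{K/F}(\sigma)}(g)=\chi_\sigma(\nnn(g))$ rewrites the right-hand side as $\sum_\sigma\chi_{\Bc_{K/F}(\sigma)}(g)$, giving the desired decomposition on the regular semisimple locus.

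Agreement on the remaining classes of $\GL_2(K)$ (central, unipotent, and irregular semisimple) is then verified by direct character evaluation, together with the global dimension check $\dim\pi_A=(q-1)q^4=\sum_\sigma\dim\Bc_{K/F}(\sigma)$, which forces equality of the full virtual characters. The main obstacle I anticipate is the careful evaluation of G\'erardin's sign/Witt-class data at the twisted-diagonal images of $\GL_2(K)$: one must show that it combines with $\nnn_{K/F}(\det g)$ to recover exactly the regular-representation character of $\GL_2(F)$ at $\nnn(g)$, with no spurious Steinberg-type terms. The absence of such corrections, in contrast with Theorems~1 and~2, should reflect the fact that $K$ is a field with no proper $F$-subalgebras, so the $\GL_2(K)$-action on $V$ supports no degenerate invariant strata that could produce extra components of the Steinberg $\otimes$ trivial type.
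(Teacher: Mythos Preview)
Your proposal contains a genuine gap at its core step. The identity you invoke, $\chi_{\Bc_{K/F}(\sigma)}(g)=\chi_\sigma(\nnn(g))$ for $g\in\GL_2(K)$, is \emph{not} Shintani's defining relation. Shintani's base change is characterized by an equality of \emph{twisted} characters: one extends $\Bc_{K/F}(\sigma)$ to a representation of $\GL_2(K)\rtimes\Gal(K/F)$ and matches its trace on the coset $\GL_2(K)\rtimes\sigma$ with $\chi_\sigma$ via the norm map (cf.\ Section~\ref{shintanilift} of the paper). On $\GL_2(K)$ itself there is no such formula. A single evaluation shows this: for $\sigma=\St_{\GL_2(F)}$ one has $\Bc_{K/F}(\sigma)=\St_{\GL_2(K)}$, so $\chi_{\Bc_{K/F}(\sigma)}(1)=q^3$, whereas $\chi_\sigma(\nnn(1))=\chi_{\St_{\GL_2(F)}}(1)=q$. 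Consequently your target identity $\chi_{\pi_A}(g)=\sum_\sigma\chi_\sigma(\nnn(g))$ cannot hold either: the right-hand side is the regular character of $\GL_2(F)$ at $\nnn(g)$, hence vanishes whenever $\nnn(g)\neq 1$, while $\chi_{\pi_A}$ certainly does not vanish on most regular semisimple classes. A character-theoretic proof along your lines would require computing $\sum_\sigma\chi_{\Bc_{K/F}(\sigma)}$ directly from the explicit character tables of $\GL_2(K)$, which is feasible but considerably more laborious than what you outline; the paper's author remarks in the introduction that such an approach ``sometimes involves to solve certain equations'' and is ``in some sense also complex.''

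The paper proceeds quite differently. After writing down the explicit Schr\"odinger model of $\pi_A$ (Proposition~\ref{demp3pro2}), it computes $V^N$ and the $T$-action on it, then uses Frobenius reciprocity to pin down exactly which principal series, trivial twists, and Steinberg twists occur (Propositions~\ref{demp3pro4} and Corollary~\ref{demp3cor2}). This handles the non-cuspidal part and a dimension count isolates the size of the cuspidal remainder. For the cuspidal constituents the paper does use Shintani lifting, but \emph{correctly and in a different direction}: it passes to the quadratic extension $K_1/K$, where each cuspidal $\Pi_\Lambda$ of $\GL_2(K)$ base-changes to a principal series $\Pi_{\Lambda,\Lambda^{q^3}}$ of $\GL_2(K_1)$, already known to occur in $\pi_{A,K_1}$ with multiplicity one. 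The compatibility of the Weil representation with Shintani descent (Proposition~\ref{liftingGSp}) then forces $\langle\pi_A,\Pi_\Lambda\rangle\geq 1$, and the dimension count makes it exactly one. This avoids any direct character computation for cuspidals.
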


Let us briefly review the whole story. Theorems (1) is obtained mainly by using the method in \cite{Andr} to decompose reducible representations. In \cite{Andr}, Andrade  considered   higher rank groups.   We  first  formulate   the representation $\pi_A$ of $\GL_2(F)\times \GL_2(F) \times \GL_2(F)$ concerned in this case. This can be done by following  works of  G\'erardin  and of  Shinoda on the Weil representations(cf. \cite{Gero}, \cite{Shin}).  Then we take two irreducible representations $\pi_1$, $\pi_2$ of $\GL_2(F)$, and determine the dimension of $\Hom_{1\times \GL_2(F)\times \GL_2(F)} \Big( \pi_A, \pi_1\otimes \pi_2\Big)$. One key ingredient is that   $\pi_A$ is  in fact  a representation of the group $\Bigg(\GL_2(F)\times \GL_2(F) \times \GL_2(F) \Bigg) \rtimes S_3$, where $S_3$ is the permutation group of $3$ variables. So if we put  $\mathcal{R}\Big( \pi_A\Big)=\{ \pi_1 \otimes \pi_2 \otimes \pi_3| \Hom_{\GL_2(F)\times \GL_2(F)\times \GL_2(F)}\Big( \pi_A, \pi_1 \otimes \pi_2 \otimes \pi_3\Big) \neq 0\}$,  by Clifford theory, $\mathcal{R}\Big( \pi_A\Big)$ is $S_3$-invariant. This together with the above calculations of dimension    derives  Theorem (1). For  Theorem (2),   following the  method in \cite{Andr},  we first write down the  Weil representation $\pi_A$ of $\GL_2(F)\times \GL_2(E)$ in this case,   and then   decompose  the canonical representation    \Big($\GL_2(F)$, $\Hom_{\GL_2(E)}\Big( \pi_A, \Pi\Big)$\Big) into irreducibles for  each $\Pi\in \Irr(\GL_2(E))$.  We did this by  checking the irreducible representations of $\GL_2(E)$ one by one. The main  difficulty is when $\Pi$ is cuspidal. For that case, we use the explicit models given by \cite{Andr}. The \'etale algebra $A=K$ is a new case.  We use Weil's Galois  descent to construct a map $i$ from $\GL_2(K)$ to $\GSp_8(F)$. Through this map, we shall define a new Weil representation $\pi_A$ for the group $\GL_2(K)$.  However the explicit realisation of this representation   is somehow complex, this causes the difficulty to study its irreducible components. One point is that the map $i$ sends the standard Borel subgroup of $\GL_2(K)$ to that of $\GSp_8(F)$. By  virtue of Frobenius reciprocity,  we obtain the results for the principal series representations.  For the cuspidal representations,  we use  a technique so-called  ``base change'' to reduce  to deal with  some  principal series representations. We should mention that this technique has  been used in  Gan's paper  \cite{Gan} to obtain Howe correspondences for exceptional groups.

Another approach to the  results of this paper maybe use  character theory in representations and  it  sometimes  involves to solve certain  equations. In practice,  giving  such equations in some sense is also complex.

The structure of this paper is as follows. The first section is devoted to giving  some notations and  recalling some known results. In the second section, we consider the \'etale algebra $A=F \times F \times F$. In the third section, we deal with the case   $A=F\times E$. In the fourth  section,   we consider the case $A=K$; there we  put some calculations in two appendices.

\emph{Acknowledgements.}
The paper is one part of the author's Ph.D. thesis and completes in the Academy  of Mathematics  and Systems Science. He would like to thank his advisor  Guy Henniart for   useful comments.
\section{Notation and Preliminaries}\label{Prel}
\subsection{} The following notations will be standard through the whole paragraph, and  used repeatedly without recalling their meanings:

\begin{itemize}
\item $F$ = a finite field with odd cardinality $q$;
\item $E$ = a fixed field extension of $F$ of degree 2;
\item $K$ = a fixed field extension of $F$ of degree 3;
\item $\phi$ = a fixed non-trivial character of the additive group  $F$;
\item $\phi^a$ = the character of $F$, defined by $\phi^a(b):=\phi(ab)$ for $b\in F$, $a\in F^{\times}$;
\item $X_A$ = the set of all non-trivial irreducible complex representations of an abelian group $A$;
\item $\Rep(G)$ = the category of complex representations of a finite group $G$;
\item $\Irr(G)$ = the class of all irreducible complex representations of a  finite group $G$, up to isomorphism;
\item $\check{\sigma}$ = the contragredient representation of $\sigma$, for $\sigma \in \Rep(G)$;
\item  If $(\sigma, V)$ is a representation of $G$, then  we will denote its $G$-invariant set by $ V^G$.
\end{itemize}

\subsection{} For later use,  we regroup  some results of the  Weil representation of $\GSp_{2n}(F)$ (cf. \cite{Gero},  \cite{MVW}, \cite{Shin}).

 Let $V$ be a 2n-dimensional $F$-vector space, endowed with a non-degenerate symplectic form $\langle \  \rangle $.
 To each non-trivial character $ \psi $ of the additive group $F$, one can associate the Weil representation $ (\omega_{\psi}, W_{\psi})$ of the metaplectic group
 $\Mp_{2n}(F)$ (cf. \cite[Chapter 2]{MVW}). The exact sequence $$  1 \longrightarrow \C^{\times }
 \longrightarrow \Mp_{2n}(F) \stackrel{p}{\longrightarrow} \Sp_{2n}(F) \longrightarrow 1$$ is splitting. Except $n=1$, $F=\F_3$, the group $\Sp_{2n}(F)$ is perfect, so there exists a unique section of morphism $i$ from $\Sp_{2n}(F)$ to $\Mp_{2n}(F)$, such that $p\circ i=\id_{\Sp_{2n}(F)}$.  In the case $n=1$, $F=\F_3$, we  choose a certain section $i$ in the sense of G\'erardin (cf. \cite[p. 63, Theorem 2.4 (a'')]{Gero}). Via the map $i$,  one  obtains a  representation $(\omega_{\psi},  W_{\psi})$   of $\Sp_{2n}(F)$ with respect to $\psi$, called the \emph{Weil representation}.
 One can extend it as a representation of $\GSp_{2n}(F)$ by setting $\rho_{\psi}=\Ind_{\Sp_{2n}(F)}^{\GSp_{2n}(F)} \omega_{\psi}$.
 It is observed that $\rho_{\psi}$ is independent of $\psi$ (see \cite[p. 270, Theorem]{Shin}). Hence we could omit $\psi$, and  only  write $\rho$  briefly.

 The study of the Weil representation often involves an explicit realized model.
  We recall one so-called `` the Schr\"odinger model'':  Let $V=V_+ \oplus V_-$ be a complete polarization. Let $\{ v_1, \ldots, v_n\}$ be a F-basis of $V_+$, and $\{ v'_1, \ldots , v'_n\}$  its dual basis with respect to $\langle, \rangle$.
 Every element $g \in \GSp(V)$  can be written in the following form:
 $ g= \begin{pmatrix}
 \alpha  & \beta \\
  \gamma & \delta
\end{pmatrix}$ where $\alpha\in \End_F(V_+),$ $ \beta \in \Hom_F(V_-, V_+), $ $\gamma \in \Hom_F(V_+, V_-),$ $\delta \in  \End_F(V_-).$
The group $\GSp(V)$ is generated by the set  $\{h(a), u(b), h'(t), \omega \}$ (see \cite{Andr}, p. 163),  where
$h(a)= \begin{pmatrix}
a  & 0 \\
 0 & a^{\vee }
\end{pmatrix},
$ \ $a^{\vee }$ is the contragredient of $a$;
$u(b)= \begin{pmatrix}
 1 & b  \\
 0 & 1
\end{pmatrix}
$ $\quad$ for a  symmetric morphism $b\in \Hom_F(V_-, V_+)$;
$h'(t)= \begin{pmatrix}
  1& 0 \\
  0&t
\end{pmatrix}, t\in F^{\times}$;
$\omega= \begin{pmatrix}
  0& I \\
 -I & 0
\end{pmatrix}$ with $\omega(v_i)=-v'_i,\, \omega(v_i')=v_i$.\footnote{In \cite{Shin}, the $\omega$ is defined as $\omega(v_i)=v_i'$, $\omega(v_i')=-v_i$, but it does not affect the following equation (\ref{weilGSp3}). To obtain (\ref{weilGSp3}), we use the equality: $\gamma\big( \psi^{\frac{1}{2}}\big)^{-n} \chi_q^+(-1)^n=\gamma(\psi^{-\frac{1}{2}})^{-n}$.}  The Weil representation $\rho$ of $\GSp(V)$ can be realized in the space $W_-=\C[V_-\times X_F]$ of complex functions on $V_- \times X_F$.
More precisely the action of $\GSp(V)$ on $W_-$ is determined by the following formulas (cf. \cite[ p. 270]{Shin}):\\
\begin{equation}\label{weilGSp1}
 \big(\rho(h(a))F\big)(y, \psi)= \chi_q^+(det_{V_+}a)F(a^{\vee -1}y, \psi ),
\end{equation}
\begin{equation}\label{weilGSp2}
 \big(\rho(u(b))F\big)(y, \psi)= \psi(\frac{1}{2}\langle by, y \rangle )F(y,\psi),
\end{equation}
\begin{equation}\label{weilGSp3}
 \big(\rho(\omega)F\big)(y,\psi)= \gamma(\psi^{-\frac{1}{2}})^{-n} \sum_{z\in V_-} F(z, \psi)\psi(\langle z, \omega^{-1}(y) \rangle),
\end{equation}
\begin{equation}\label{weilGSp4}
 \big(\rho(h'(t))F\big)(y,\psi)= F(y, \psi^{t^{-1}}),
\end{equation}
where $y\in V_-, \psi \in X_F, \gamma(\psi)= \sum_{x\in F}\psi(x^2), x_q^{+}= \textrm{ Legendre symbol } ( \frac{ }{\mathbb{F}_q})$.\\

\subsection{} We summarize some facts about the irreducible representations of $\GL_2(F)$ and  its  Borel subgroup (cf. \cite[Chapter 2]{BushH}  and \cite{Shap}).

We write $G=\GL_2(F), B=\{\begin{pmatrix}
  a& b\\
  0&d
\end{pmatrix} \in G \}
, \ N= \{
 \begin{pmatrix}
  1& b \\
  0&1
\end{pmatrix} \in G \}, \ T=\{\begin{pmatrix}
  a& 0 \\
  0&d
\end{pmatrix} \in G \}, \ M=\{\begin{pmatrix}
  a& b \\
  0&1  \end{pmatrix} \in G  \}, \ Z=\{\begin{pmatrix}
  a& 0 \\
  0&a  \end{pmatrix} \in G  \}$. Recall that $1_G$ is the trivial representation of $G$, and $\St_G$ is the Steinberg representation of $G$. Let $\theta$  be a regular character of $E^{\times}$;  the irreducible cuspidal representation of $G$  corresponding  to $\theta$ will be denoted by $\pi_{\theta}$.  If $(\sigma, V)$ is a representation of $G$ and $\psi$  a character of $F^{\times}$, we define the representation $(\psi\cdot\sigma, V)$ of $G$ by  $\psi\cdot \sigma(g)=\psi(\det\,g) \sigma(g)$.

\begin{theorem}[{\cite[Chapter 2]{BushH}}]
The following is a complete list of the isomorphism classes of the  irreducible representations of $G$:
\begin{itemize}
\item[(1)] $\pi_{\chi_1, \chi_2},\quad$ where $\chi_1\neq  \chi_2$  are characters of $F^{\times}$;
\item[(2)] $\psi\!\cdot\!1_G,\quad$ where $\psi$ ranges over the characters of $F^{\times}$;
\item[(3)] $\psi\!\cdot\!\St_G, \quad$ where $\psi$ ranges over the characters of $F^{\times}$;
\item[(4)] $\pi_{\theta}, \qquad$ where $\theta$ ranges over the regular characters of $E^{\times}$.
\end{itemize}
The classes in the list all are distinct except that in (1) $ \pi_{\chi_1,\chi_2} \simeq \pi_{\chi_2,\chi_1}$, and in (4) $\pi_{\theta} \simeq \pi_{\theta^q}$.
\end{theorem}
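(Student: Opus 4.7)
The plan is to verify the list by constructing each family, proving irreducibility and the stated equivalences, and finally counting to confirm exhaustion. Three routes meet at this theorem (parabolic induction, cuspidal construction, counting); I would take them in that order.

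First, I would analyze $\Ind_B^G(\chi_1\otimes\chi_2)$ for all pairs of characters of $F^\times$. The Bruhat decomposition gives $B\backslash G/B=\{B,BwB\}$, where $w$ is the non-trivial Weyl element; Mackey's intertwining number formula then yields
\[
\dim\Hom_G\bigl(\Ind_B^G(\chi_1\otimes\chi_2),\Ind_B^G(\chi_1'\otimes\chi_2')\bigr)
=\delta_{(\chi_1,\chi_2),(\chi_1',\chi_2')}+\delta_{(\chi_1,\chi_2),(\chi_2',\chi_1')}.
\]
When $\chi_1\neq\chi_2$ this proves $\pi_{\chi_1,\chi_2}$ is irreducible of dimension $q+1$ with $\pi_{\chi_1,\chi_2}\simeq\pi_{\chi_2,\chi_1}$, contributing $\binom{q-1}{2}$ distinct classes. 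When $\chi_1=\chi_2=\psi$, the endomorphism algebra has dimension $2$ and one identifies the decomposition as $\psi\!\cdot\!1_G\oplus\psi\!\cdot\!\St_G$, with $\psi\!\cdot\!\St_G$ irreducible of dimension $q$, yielding the $2(q-1)$ members in families (2) and (3). Inequivalences across families follow by comparing central characters and dimensions.

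Second, I would construct the cuspidal family. For each regular character $\theta$ of $E^\times$ (i.e.\ $\theta\neq\theta^q$) one needs an irreducible cuspidal representation $\pi_\theta$ of dimension $q-1$. Three equivalent constructions are available: the Deligne--Lusztig virtual character $R_{E^\times}^G(\theta)$, which is known to be irreducible up to sign for regular $\theta$; an explicit model on $\C[F^\times]$ as in \cite{Shap}, where one writes down the action of the generators $h(a),u(b),w$ of $G$ and checks the defining relations; or the theta correspondence for the dual pair $(\SL_2,\mathrm{O}_2^-)$ paired with induction to $G$. Character computations on the elliptic torus give $\pi_\theta\simeq\pi_{\theta^q}$, and distinct Galois orbits of regular $\theta$ produce inequivalent representations because their characters on elliptic classes differ. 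Cuspidality follows either from vanishing of the character on non-central unipotents, or equivalently from $(\pi_\theta)_N=0$.

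Third, I would check exhaustion by counting conjugacy classes: $G$ has $q-1$ central classes, $q-1$ classes of the form $\diag(a,a)$ times a non-trivial unipotent, $\binom{q-1}{2}$ split regular semisimple classes, and $\tfrac12 q(q-1)$ elliptic classes, totaling $q^2-1$. The proposed list contains
\[
\binom{q-1}{2}+2(q-1)+\tfrac{1}{2}q(q-1)=(q-1)(q+1)=q^2-1
\]
pairwise inequivalent irreducibles, matching the count, so it is complete. The main obstacle is the cuspidal construction, which cannot be bootstrapped from parabolic induction: either one imports Deligne--Lusztig theory wholesale, or one commits to the elementary Whittaker-model approach and verifies the representation relations on generators of $G$ by direct calculation — routine but genuinely lengthy, and the real technical heart of the proof.
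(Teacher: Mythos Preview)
The paper does not actually prove this theorem: it is stated as a known result with the citation \cite[Chapter 2]{BushH} and no argument is given. Your sketch is a correct outline of the standard classification proof (Mackey theory for the principal series, an explicit/Deligne--Lusztig construction for the cuspidals, and a conjugacy-class count to confirm exhaustion), and is essentially the argument one finds in the cited reference.
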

\begin{lemma}[{\cite[Chapter 2]{BushH}}]
 Notations being in above Theorem, we then have $(\pi_{\chi_1, \chi_2})^{\vee} \simeq \pi_{\chi_1^{-1}, \chi_2^{-1}}$, $(\psi \cdot 1_G)^{\vee}\simeq \psi^{-1} \cdot 1_G$, $(\psi \cdot \St_G)^{\vee} \simeq \psi^{-1} \cdot \St_G$ and $(\pi_{\theta})^{\vee} \simeq \pi_{\theta^{-1}}$.
 \end{lemma}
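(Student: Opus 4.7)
The plan is to reduce all four cases to one general identity: for any $\sigma\in\Irr(\GL_2(F))$ with central character $\omega_\sigma$, one has $\sigma^{\vee}\simeq \omega_\sigma^{-1}\cdot\sigma$. I would first establish this identity via the involution $\iota\colon g\mapsto g^{-t}$ of $\GL_n(F)$. On the one hand, for any $\sigma\in\Irr(\GL_n(F))$ the characters of $\sigma\circ\iota$ and $\sigma^{\vee}$ coincide, since $g$ and $g^{t}$ are conjugate for every $g\in\GL_n(F)$ and hence $\chi_{\sigma}(g^{-t})=\chi_{\sigma}(g^{-1})$. On the other hand, in $\GL_2$ one checks directly that $g^{-t}=\det(g)^{-1}\,wgw^{-1}$ with $w=\begin{pmatrix}0&1\\-1&0\end{pmatrix}$, so $\sigma\circ\iota\simeq\omega_\sigma^{-1}\cdot\sigma$ because conjugation by $w$ is inner.

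Granted this identity, each of the first three cases becomes a one-line substitution. For $\pi_{\chi_1,\chi_2}$ the central character is $\chi_1\chi_2$, and twisting the principal series by $(\chi_1\chi_2)^{-1}\circ\det$ yields $\Ind_B^G(\chi_2^{-1}\otimes\chi_1^{-1})=\pi_{\chi_2^{-1},\chi_1^{-1}}\simeq\pi_{\chi_1^{-1},\chi_2^{-1}}$, by the symmetry $\pi_{\chi_1,\chi_2}\simeq\pi_{\chi_2,\chi_1}$ recalled in the preceding theorem. For $\psi\cdot 1_G$ and $\psi\cdot\St_G$ the central character equals $\psi^{2}$ in both cases (since $1_G$ and $\St_G$ both have trivial central character), so twisting by $\psi^{-2}$ recovers $\psi^{-1}\cdot 1_G$ and $\psi^{-1}\cdot\St_G$ respectively.

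The cuspidal case is the main subtlety. Here $\omega_{\pi_\theta}=\theta|_{F^{\times}}$, so the identity gives $\pi_\theta^{\vee}\simeq(\theta|_{F^{\times}})^{-1}\cdot\pi_\theta$. I would then invoke the standard twisting rule $\psi\cdot\pi_\theta\simeq\pi_{\theta\cdot(\psi\circ\nnn_{E/F})}$ for characters $\psi$ of $F^{\times}$, and compute, for $x\in E^{\times}$, that $(\theta|_{F^{\times}})^{-1}(\nnn_{E/F}(x))=\theta^{-1}(x)\theta^{-q}(x)$; hence the new parameter is $\theta\cdot\theta^{-1}\theta^{-q}=\theta^{-q}$, and $\pi_{\theta^{-q}}\simeq\pi_{\theta^{-1}}$ by the Galois relation $\pi_\theta\simeq\pi_{\theta^{q}}$ from the theorem. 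As an alternative avoiding the twisting rule, one could argue directly from the explicit character formula for $\pi_\theta$, verifying class by class that $\chi_{\pi_\theta}(g^{-1})=\chi_{\pi_{\theta^{-1}}}(g)$ on each conjugacy class of $\GL_2(F)$ (split semisimple, unipotent, central, and elliptic coming from $E^{\times}$), which is where the bookkeeping is slightly heavier.
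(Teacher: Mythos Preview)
Your argument is correct. The key identity $\sigma^{\vee}\simeq\omega_\sigma^{-1}\cdot\sigma$ for $\sigma\in\Irr(\GL_2(F))$ is established cleanly via the matrix identity $g^{-t}=\det(g)^{-1}\,wgw^{-1}$, and the case-by-case deductions all go through; in particular, the cuspidal computation $\theta\cdot(\theta|_{F^\times})^{-1}\circ\nnn_{E/F}=\theta^{-q}$ together with $\pi_{\theta^{-q}}\simeq\pi_{\theta^{-1}}$ is correct.

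As for comparison: the paper gives no proof of this lemma at all---it simply cites \cite[Chapter~2]{BushH} and moves on. Your proposal therefore supplies strictly more than the paper does. The approach you chose (reducing everything to the single identity $\sigma^{\vee}\simeq\omega_\sigma^{-1}\cdot\sigma$, which holds uniformly for $\GL_2$ because of the cofactor formula) is the standard efficient route and is essentially what one finds in Bushnell--Henniart; the alternative character-table verification you sketch at the end would also work but, as you note, is clumsier for the cuspidal class.
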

Now we investigate  the representations of the group B.  Let $\sigma_{\chi_1, \chi_2}$ be the character of $B$, defined by $\sigma_{\chi_1, \chi_2} \Big(\begin{pmatrix}
  a& b \\
  0&d  \end{pmatrix} \Big)= \chi_1(a) \chi_2(d)$. Let $\sigma$ be the unique  irreducible representation of $M$ of the highest dimension and  $\psi$  a character of $F^{\times}$. Attached to $\sigma$ and $\psi$, there is an irreducible representation $\psi\!\otimes\!\sigma$ of $B$, defined by $(\psi\otimes \sigma)(zm):=\psi(z)\sigma(m)$ for $z\in Z, m\in M$.
\begin{theorem}[{\cite[Theorem 7.1]{Shap}}]\label{representationsofB}
The following is a complete list of the isomorphism classes of the irreducible representations of $B$:
\begin{itemize}
\item[(1)] $\sigma_{\chi_1, \chi_2} \quad$ for any pair $(\chi_1, \chi_2)$ of characters of $F^{\times}$;
\item[(2)] $\psi\!\otimes\!\sigma \quad$ for any character $\psi$ of $Z$.
\end{itemize}
\end{theorem}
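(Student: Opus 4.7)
The plan is to exploit the decomposition $B = Z \times M$ as an internal direct product. Indeed, $Z$ lies in the center of $B$ and meets $M$ trivially, while every element admits the factorization
\[
\begin{pmatrix} a & b \\ 0 & d \end{pmatrix} = \begin{pmatrix} d & 0 \\ 0 & d \end{pmatrix}\begin{pmatrix} a/d & b/d \\ 0 & 1 \end{pmatrix} \in Z\cdot M.
\]
Consequently $\Irr(B)$ is in natural bijection with $\Irr(Z)\times\Irr(M)$ via exterior tensor product, so the problem reduces to classifying $\Irr(M)$.

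For this, I write $M = N\rtimes (M/N)$, where $M/N\cong F^{\times}$ acts on $N\cong(F,+)$ by multiplication, and apply Clifford--Mackey theory to the abelian normal subgroup $N$. Since $F^{\times}$ acts transitively on $F\setminus\{0\}$ and hence (after fixing a non-trivial additive character) transitively on $X_N$, there are exactly two $F^{\times}$-orbits on $\widehat{N}$: the trivial character and a single orbit consisting of all of $X_N$. Over the trivial orbit we obtain the $q-1$ inflations of characters of $M/N\cong F^{\times}$; over the non-trivial orbit, the stabilizer of any $\phi\in X_N$ in $F^{\times}$ is trivial, so $\sigma:=\Ind_{N}^{M}\phi$ is a single irreducible representation of dimension $q-1$, and its isomorphism class is independent of the choice of $\phi$ since all such $\phi$ are $F^{\times}$-conjugate. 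This establishes both existence and uniqueness of the ``unique irreducible representation of $M$ of highest dimension'' alluded to in the statement.

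Translating back via $B\cong Z\times M$, the characters of $B$ are products of a character of $Z$ with a character of $M$ (inflated from $M/N$); a direct reparametrization identifies these with the family $\sigma_{\chi_1,\chi_2}$ for arbitrary pairs $(\chi_1,\chi_2)$ of characters of $F^{\times}$, giving (1). The remaining irreducibles are exactly the $\psi\otimes\sigma$ for $\psi\in\widehat{Z}$, giving (2). One final sanity check is the sum-of-squares identity $(q-1)^{2}\cdot 1 + (q-1)\cdot(q-1)^{2} = q(q-1)^{2} = |B|$, which confirms completeness. There is no substantive obstacle: the argument is a textbook application of Clifford--Mackey theory to the semidirect product $F\rtimes F^{\times}$, joined to the elementary observation that $Z$ splits off $B$ as a direct factor. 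The only step that requires mild care is the bookkeeping that matches $(\chi_1,\chi_2)$ with the pair (character of $Z$, character of $M/N$), which is a routine reindexing.
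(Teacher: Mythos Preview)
Your argument is correct. The paper itself does not supply a proof of this theorem: it is stated with a citation to \cite[Theorem 7.1]{Shap} and no further justification. Your proof---splitting $B\cong Z\times M$ and then running Clifford--Mackey on the affine group $M\cong F\rtimes F^{\times}$---is the standard route and is exactly what one finds in the cited reference, so there is nothing further to compare.
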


For convenience use, we describe the decomposition of the restriction to $B$ of any irreducible   representation of $G$.
\begin{proposition}\label{Rest1}
\begin{itemize}
\item[(1)] $\Res_B^G(\psi\!\cdot\!1_G) = \sigma_{\psi, \psi}$.
\item[(2)] $\Res_B^G(\psi \cdot \St_G) = \big(\sigma_{\psi, \psi}\big)\, \oplus \, \big( \psi^2\otimes \sigma\big)$.
\item[(3)] $\Res_B^G\pi_{\chi_1, \chi_2} = \big(\sigma_{\chi_1, \chi_2}\big)\,\oplus \,\big(\sigma_{\chi_2, \chi_1}\big) \,\oplus \,\big(\chi_1\chi_2 \otimes\sigma\big) $.
\item[(4)] $\Res_B^G\pi_{\theta}= (\theta|_{F^{\times}}) \otimes \sigma$.
\end{itemize}
\end{proposition}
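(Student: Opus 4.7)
The plan is to treat (1) directly from the definition, reduce (2)--(3) to Mackey's formula, and handle (4) via Frobenius reciprocity combined with a dimension count. Case (1) is immediate: the one-dimensional character $\psi\cdot 1_G$ restricted to $B$ sends $\begin{pmatrix} a & b \\ 0 & d\end{pmatrix}$ to $\psi(ad)=\psi(a)\psi(d)$, which is $\sigma_{\psi,\psi}$ by definition.

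For (2) and (3), the key input is the double coset decomposition $G=B\sqcup BwB$, where $w=\begin{pmatrix}0&1\\1&0\end{pmatrix}$ satisfies $B\cap wBw^{-1}=T$. Mackey's formula then gives, for any pair $(\chi_1,\chi_2)$ of characters of $F^{\times}$,
\[
\Res_B^G\Ind_B^G(\sigma_{\chi_1,\chi_2})\;\simeq\;\sigma_{\chi_1,\chi_2}\;\oplus\;\Ind_T^B(\chi_2\otimes\chi_1),
\]
since $w$-conjugation swaps the two diagonal characters. I would then decompose the induced summand: $\Ind_T^B(\chi_2\otimes\chi_1)$ has dimension $q$ and central character $\chi_1\chi_2$; by Frobenius reciprocity its unique one-dimensional constituent is $\sigma_{\chi_2,\chi_1}$, and by Theorem 1.4 the only representation of $B$ filling out the complementary $(q-1)$-dimensional piece with central character $\chi_1\chi_2$ is $\chi_1\chi_2\otimes\sigma$. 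Applied with $(\chi_1,\chi_2)=(1,1)$ to the decomposition $\Ind_B^G(1_B)=1_G\oplus\St_G$ and then twisted by $\psi\circ\det$, this yields (2); applied directly with $\chi_1\neq\chi_2$ it yields (3).

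For (4), I would use Frobenius reciprocity to kill all one-dimensional constituents: for any characters $\chi_1,\chi_2$ of $F^{\times}$,
\[
\Hom_B(\sigma_{\chi_1,\chi_2},\pi_\theta)\;\simeq\;\Hom_G(\Ind_B^G\sigma_{\chi_1,\chi_2},\pi_\theta)=0,
\]
because $\pi_\theta$ is cuspidal and hence shares no constituent with $\pi_{\chi_1,\chi_2}$ nor with $\chi\cdot1_G\oplus\chi\cdot\St_G$ when $\chi_1=\chi_2=\chi$. By Theorem 1.4, $\Res_B\pi_\theta$ is therefore a sum of representations of the form $\psi\otimes\sigma$, and the dimension $q-1$ together with the central character $\theta|_{F^{\times}}$ pin it down to $\theta|_{F^{\times}}\otimes\sigma$.

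The main technical point is the decomposition of $\Ind_T^B(\chi_2\otimes\chi_1)$ used in (2)--(3), for which one cannot avoid invoking Theorem 1.4 to identify the $(q-1)$-dimensional complement; the remaining manipulations are routine applications of Mackey and Frobenius reciprocity.
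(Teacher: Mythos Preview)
Your proof is correct. The paper, by contrast, gives no argument at all: it simply refers the reader to a table in Andrade's monograph \cite{Andr}. So your approach is genuinely different in that you supply a self-contained proof via Mackey's formula, Frobenius reciprocity, and a dimension/central-character count, whereas the paper treats the result as an external input. What your route buys is that the reader does not need access to \cite{Andr}; what the paper's route buys is brevity. One small correction: the classification of $\Irr(B)$ you invoke is Theorem~\ref{representationsofB} (numbered 1.3 in the paper), not 1.4---Proposition 1.4 is the very statement you are proving.
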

\begin{proof}[Proof]
See the table in \cite[p. 87]{Andr}.
\end{proof}

\subsection{} Let $L$ be the Galois field extension of $F$ of degree $n$. One knows that there exists the base-change map $\Bc_{L/F}: \Irr(\GL_2(F)) \longrightarrow  \Irr(\GL_2(L))$ (cf. \cite{Shint}). Now we describe the explicit behaviour of this map in terms of the classification of the irreducible representations of the group $\GL_2$ in the cases $n=2, 3$.
\begin{theorem}\label{basechangeGL2}
(1) If $[L:F]=2$,  then
\begin{itemize}
\item[(i)] $\Bc_{L/F} (\pi_{\xi_1, \xi_2})=\Pi_{\Xi_1, \Xi_2}  \, \textrm{ where } \, \Xi_i=\xi_i\circ \nnn_{L/F}$ as characters of $L^{\times}$, for $i=1,2$;
\item[(ii)] $ \Bc_{L/F} (\psi\cdot 1_{\GL_2(F)}) =\Psi\cdot 1_{\GL_2(L)}  \, \textrm{ where } \, \Psi=\psi\circ \nnn_{L/F}$ as  characters of $L^{\times}$;
\item[(iii)] $ \Bc_{L/F} (\psi\cdot \St_{\GL_2(F)}) =\Psi\cdot \St_{\GL_2(L)}  \, \textrm{ where } \, \Psi=\psi\circ \nnn_{L/F}$ as  characters of $L^{\times}$;
\item [(iv)] $ \Bc_{L/F} (\pi_{\theta}) =\Pi_{\theta, \theta^q} $.
\end{itemize}
(2)If $[L:F]=3$, then
\begin{itemize}
\item[(i)] $ \Bc_{L/F} (\pi_{\xi_1, \xi_2})=\Pi_{\Xi_1, \Xi_2}  \, \textrm{ where } \, \Xi_i=\xi_i\circ \nnn_{L/F}$ as  characters of $L^{\times}$,for $i=1,2$;
\item[ (ii)] $ \Bc_{L/F} (\psi\cdot 1_{\GL_2(F)}) =\Psi\cdot 1_{\GL_2(L)}  \, \textrm{ where } \, \Psi=\psi\circ \nnn_{L/F}$ as  characters of $L^{\times}$;
\item [ (iii)] $ \Bc_{L/F} (\psi\cdot \St_{\GL_2(F)}) =\Psi\cdot \St_{\GL_2(L)}  \, \textrm{ where } \, \Psi=\psi\circ \nnn_{L/F}$ as  characters of $L^{\times}$;
\item [(iv)] $ \Bc_{L/F} (\pi_{\theta}) =\Pi_{\Theta} $  where $[F_1:F]=2, [L_1:L]=2, L_1\supseteq F_1, \theta \in \Irr(F_1^{\times})-\Irr(F^{\times}), \Theta\in\Irr(L_1^{\times})-\Irr(L^{\times}),\,\textrm{ and } \Theta=\theta\circ \nnn_{L_1/F_1}$ as characters of $L_1^{\times}$.
\end{itemize}
\end{theorem}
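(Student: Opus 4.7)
The plan is to use the defining property of $\Bc_{L/F}$: for a generator $\tau$ of $\Gal(L/F)$, the lift $\Bc_{L/F}(\pi)$ is characterized as the unique irreducible representation of $\GL_2(L)$ satisfying Shintani's twisted character identity
\[
\tr \Bc_{L/F}(\pi)(g \cdot \tau) = \pm \tr \pi(\nnn_{L/F}(g))
\]
for all $\tau$-regular $g \in \GL_2(L)$. Two functorial consequences do almost all the work: base change commutes with parabolic induction, and with twisting by characters of the determinant, where on one-dimensional characters $\Bc_{L/F}$ is simply $\chi \mapsto \chi \circ \nnn_{L/F}$.

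With these in hand, items (i)--(iii) of both (1) and (2) follow uniformly. Item (i) is the compatibility with $\Ind_B^G$ applied to $\pi_{\xi_1,\xi_2} = \Ind_B^G(\xi_1 \otimes \xi_2)$. Item (ii) follows because $\psi \cdot 1_G$ is the one-dimensional character $g \mapsto \psi(\det g)$, whose Shintani lift is forced to be the character $g \mapsto \Psi(\det g) = \Psi \cdot 1_{\GL_2(L)}$. Item (iii) is extracted from item (i) applied to the reducible principal series $\pi_{\psi,\psi}$: its base change is $\Pi_{\Psi,\Psi}$, whose Jordan--H\"older constituents are $\Psi \cdot 1_{\GL_2(L)}$ and $\Psi \cdot \St_{\GL_2(L)}$; matching the two constituents on each side, combined with (ii), pins down (iii).

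The main obstacle is the cuspidal case (iv) in both (1) and (2). Here I would exploit the explicit character formula for $\pi_\theta$: on the elliptic torus $E^\times \hookrightarrow \GL_2(F)$ one has $\tr \pi_\theta(t) = -(\theta(t) + \theta^q(t))$, while on the split torus the character is expressible in terms of $\theta|_{F^\times}$. For (1)(iv), with $L = E$, the norm $\nnn_{L/F}$ carries the split torus of $\GL_2(L)$ onto the elliptic torus of $\GL_2(F)$, and the Shintani identity forces the twisted trace of $\Bc_{L/F}(\pi_\theta)$ to match that of the principal series $\Pi_{\theta, \theta^q}$; a central-character check rules out the other candidate with the same constituents. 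For (2)(iv), since $\gcd(3,2) = 1$ the compositum $L_1 = L \cdot F_1$ is the quadratic extension of $L$ that parametrizes the cuspidal representations of $\GL_2(L)$; the character $\Theta = \theta \circ \nnn_{L_1/F_1}$ is the unique lift compatible with the norm, and checking Shintani's identity on elliptic $\tau$-regular elements with eigenvalues in $L_1^\times$ yields $\Bc_{L/F}(\pi_\theta) = \Pi_\Theta$. This last step is essentially Shintani's original computation in \cite{Shint}, and is the only place where one really has to engage with Deligne--Lusztig type character values rather than purely formal properties of the lift.
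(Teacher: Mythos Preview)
Your sketch is reasonable and essentially correct, but you should know that the paper does not prove this theorem at all: its entire proof is the single line ``See \cite[Section 4, p.~410--414]{Shint}.'' This is a background result quoted from Shintani's original paper, not something the present paper establishes.

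What you have outlined is in fact a fair summary of Shintani's own argument in that reference: compatibility of the lift with parabolic induction and central twists handles the non-cuspidal cases uniformly, and the cuspidal cases are settled by the explicit character formulas for $\pi_\theta$ together with the behaviour of elliptic tori under extension of scalars (splitting when $[L:F]=2$, remaining anisotropic when $[L:F]=3$ since $\gcd(2,3)=1$). So your proposal and the paper's ``proof'' point to the same source; you have simply unpacked what the citation contains, which is more than the paper itself does.
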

\begin{proof}
See \cite[Section 4, p. 410---414]{Shint}.
\end{proof}
\subsection{}\label{shintanilift} As is known that one can generalize the above base-change map  defined for  other groups, called \emph{Shintani lifting} or \emph{Shintani descent} (e.g. \cite{DignM2}).  In the article \cite{Gyoja}, Gyoja studied   systematically   Shintani lifting for connected linear algebraic groups.   We will recall his certain results below. In addition, we also present one main result in \cite{HW} about the behaviour of   the Weil representations with respect to Shintani lift.

Let $\overline{F}$ be a fixed algebraic closure of $F$ with Frobenius map $\sigma$. Let $\textbf{G}$ be a connected linear algebraic group over $F$. Denote by $F_i$ the $\sigma^i$-fixed points of $\overline{F}$. Let $Y$ be a set on which there exists a $\sigma$-action; we denote  the set of $\sigma$-fixed points by $Y_{\sigma}$. Denote by
 $\textbf{G}(F_i)$ the $F_i$-geometric points of $\textbf{G}$ and $\mathcal {C}(\textbf{G}(F_i))$
 the set of complex valued class functions of $\textbf{G}(F_i)$. Fix a positive integer $m$.  Via the map $\Gal(\overline{F}/F) \twoheadrightarrow  \Gal(F_m/F)$, we view the Frobenius element $\sigma$ as one generator for the group $\Gal(F_m/F)$.  For $0 \leq i \leq m-1$, let us denote by $\textbf{G}(F_m)\rtimes \sigma^i $,  the subset of the semi-direct product $\textbf{G}(F_m)\rtimes \Gal(F_m/F)$  consisting of  $(g, \sigma^i)$'s for $g\in \textbf{G}(F_m)$. In the article \cite{Gyoja}, following Kawanaka \cite{Kawa}, Gyoja defined the norm maps $\nnn_i$ as follows:
$$N_i: \textbf{G}(F_m)\rtimes \sigma^i \longrightarrow \textbf{G}(\overline{F});$$
$$[ x, \sigma^i] \longmapsto  \alpha(x)\Big(x  \sigma^i(x)\cdots \sigma^{i\big( \frac{m}{d}-1\big)}(x)\Big) \alpha(x)^{-1}, $$
where $\alpha(x)$ is an element in $\textbf{G}(\overline{F})$ such that $ \alpha(x)^{-1}\sigma^d \Big(\alpha(x)\Big)=x\sigma^i(x)\cdots \sigma^{i(t-1)}(x)$ and $d, \, t$ are the integers given by $d=(m,i)$  and $ti\equiv d (\!\mod \, m)$. Here $(m,i)$  denotes the greatest common divisor of $m$ and $i$.

Each  norm map  $\nnn_i$ induces a bijection from the set of $\textbf{G}(F_m)$-conjugacy
classes of $\textbf{G}(F_m)\rtimes\sigma^i$ onto the set of conjugacy classes of $\textbf{G}(F_m)_{\sigma^i}=\textbf{G}(F_{(m,i)})$.
Through $\nnn_i$, one  defines the $i$-restriction map from $\mathcal {C}(\textbf{G}(F_m)\rtimes\Gal(F_m/F))$ to
$\mathcal{C}(\textbf{G}(F_{(m,i)}))_{\sigma}$ such that $\big(i\!-\!res (f)\big)\circ \nnn_i= f|_{ \textbf{G}(F_m)\rtimes\sigma^i}$ for any
$f\in \mathcal {C}(\textbf{G}(F_m)\rtimes\Gal(F_m/F))$.

\begin{lemma}\label{basenorm}
(i) For any $f, g\in \mathcal {C}\big(\textbf{G}(F_m)_{\sigma^i}\big)$, we have $\langle f, g\rangle_{\textbf{G}(F_m)_{\sigma^i}}= \langle f\circ N_i, g\circ N_i\rangle_{\textbf{G}(F_m)\rtimes\sigma^i}$, where $\langle f, g\rangle_{\textbf{G}(F_{(m,i)})}:= \frac{1}{|\textbf{G}(F_{(m,i)})|} \sum_{x\in \textbf{G}(F_{(m,i)})} f(x) \overline{g(x)}$ and $\langle f\circ N_i, g\circ N_i\rangle_{\textbf{G}(F_m)\rtimes\sigma^i}:= \frac{1}{|\textbf{G}(F_m)\rtimes\sigma^i |} \sum_{t\in \textbf{G}(F_{m})} f\big( N_i(\sigma^i, t)\big) \overline{g\big(N_i(\sigma^i, t)\big)}$. \\
(ii) The $i$-restrictions define an isomorphism: $\mathcal {C}\big(\textbf{G}(F_m)\rtimes\Gal(F_m/F)\big) \simeq \oplus_{i=0}^{m-1} \mathcal{C}\big(\textbf{G}(F_m)_{\sigma^i}\big)_{\sigma}$.
\end{lemma}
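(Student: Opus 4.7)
The plan is to prove (i) by reinterpreting each inner product as a sum over the relevant conjugacy classes and comparing orbit sizes via a centralizer identity, and then to deduce (ii) by a coset-by-coset analysis of class functions on the semidirect product.

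For (i), observe that conjugation by $(h,1)$ inside $\textbf{G}(F_m)\rtimes\Gal(F_m/F)$ acts on the coset $\textbf{G}(F_m)\rtimes\sigma^i$ by $(x,\sigma^i)\mapsto(hx\sigma^i(h^{-1}),\sigma^i)$, so the $\textbf{G}(F_m)$-orbits on this coset are precisely the $\sigma^i$-twisted $\textbf{G}(F_m)$-conjugacy classes. Hence
$$\langle f\circ N_i,\,g\circ N_i\rangle_{\textbf{G}(F_m)\rtimes\sigma^i}=\frac{1}{|\textbf{G}(F_m)|}\sum_{C}|C|\,f(N_i(C))\overline{g(N_i(C))},$$
where $C$ ranges over those twisted classes; analogously the left-hand side equals $\frac{1}{|\textbf{G}(F_{(m,i)})|}\sum_{\bar C}|\bar C|\,f(\bar C)\overline{g(\bar C)}$ with $\bar C$ ranging over ordinary conjugacy classes of $\textbf{G}(F_{(m,i)})$. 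Since $N_i$ is asserted to induce a bijection $C\leftrightarrow\bar C$, it suffices to establish the orbit-size identity $|C|/|\textbf{G}(F_m)|=|\bar C|/|\textbf{G}(F_{(m,i)})|$, or equivalently the centralizer equality
$$|Z^{\sigma^i}(x)|=|Z_{\textbf{G}(F_{(m,i)})}(N_i(x,\sigma^i))|$$
at a representative $x$, where $Z^{\sigma^i}(x):=\{h\in\textbf{G}(F_m):hx\sigma^i(h^{-1})=x\}$.

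This centralizer identity is the main technical step I anticipate. I would derive it directly from the element $\alpha(x)\in\textbf{G}(\overline F)$ built into the definition of $N_i$: setting $y:=x\sigma^i(x)\cdots\sigma^{i(m/d-1)}(x)$ so that $N_i(x,\sigma^i)=\alpha(x)\,y\,\alpha(x)^{-1}$, a short computation using the defining relation $\alpha(x)^{-1}\sigma^d(\alpha(x))=x\sigma^i(x)\cdots\sigma^{i(t-1)}(x)$ shows that conjugation by $\alpha(x)$ sends $Z^{\sigma^i}(x)$ injectively into the centralizer of $N_i(x,\sigma^i)$ in $\textbf{G}(\overline F)$, and that the image lies in the $\sigma^d$-fixed points, i.e.\ inside $\textbf{G}(F_{(m,i)})$. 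Surjectivity onto $Z_{\textbf{G}(F_{(m,i)})}(N_i(x,\sigma^i))$ follows by inverting the construction. This is the Lang--Steinberg observation underlying Gyoja's norm bijection, and once it is in place (i) is immediate.

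For (ii), class functions on $\textbf{G}(F_m)\rtimes\Gal(F_m/F)$ split as a direct sum over the $m$ cosets $\textbf{G}(F_m)\rtimes\sigma^i$. On each such coset, invariance under conjugation by $\textbf{G}(F_m)\times\{1\}$ forces constancy along $\sigma^i$-twisted classes, while conjugation by $(1,\sigma^j)$ sends $(g,\sigma^i)\mapsto(\sigma^j(g),\sigma^i)$ and imposes an additional $\Gal(F_m/F)$-invariance. Under the bijection $N_i$, twisted-class functions on the coset descend to class functions on $\textbf{G}(F_{(m,i)})$; because $N_i$ is $\Gal(F_m/F)$-equivariant (the Galois action on twisted classes matches the Galois action on $\textbf{G}(F_{(m,i)})$-conjugacy classes via the norm), the extra Galois invariance translates precisely to $\sigma$-invariance of the descended function. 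This identifies the $i$-th summand with $\mathcal{C}(\textbf{G}(F_m)_{\sigma^i})_\sigma$ and the full map with the $i$-restrictions, yielding the claimed isomorphism.
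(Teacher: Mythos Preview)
Your outline is correct and follows the standard route. Note, however, that the paper does not actually prove this lemma: its proof consists entirely of a citation to Gyoja \cite{Gyoja} (Corollary~3.3 and the Introduction). The argument you sketch---reducing (i) to the centralizer identity $|Z^{\sigma^i}(x)|=|Z_{\textbf{G}(F_{(m,i)})}(N_i(x,\sigma^i))|$ and establishing that identity by showing conjugation by $\alpha(x)$ carries the $\sigma^i$-twisted centralizer bijectively onto the ordinary centralizer in $\textbf{G}(F_d)$ via the defining relation for $\alpha(x)$ and Lang--Steinberg, then deducing (ii) from the coset decomposition together with the Galois equivariance of $N_i$---is precisely the method underlying the cited results of Gyoja (and is also the treatment one finds, for instance, in Digne--Michel \cite{DignM}). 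So in substance you are reconstructing what the paper simply imports by reference; there is nothing to compare beyond that.
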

\begin{proof}
See \cite[ p.11, Corollary 3.3 and p.1, Introduction]{Gyoja}.
\end{proof}
\begin{lemma}\label{Indresnorm}
Let $\textbf{H}$ be a connected closed subgroup of $\textbf{G}$ defined over $F$. Then the following diagram is commutative
\[
\begin{CD}
\mathcal{C}\big(\textbf{G}(F_m)\rtimes\Gal(F_m/F)\big) @>\Res>> \mathcal{C}\big(\textbf{H}(F_m)\rtimes\Gal(F_m/F)\big)\\
@VVi-resV   @VVi-resV\\
\mathcal{C}\big( \textbf{G}(F_{(m,i)})\big)_{\sigma} @>\Res>> \mathcal{C}\big( \textbf{H}(F_{(m,i)})\big)_{\sigma}
\end{CD}
\]
\end{lemma}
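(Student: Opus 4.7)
The plan is to show that the two composites in the diagram, evaluated on any class function $f \in \mathcal{C}\big(\textbf{G}(F_m)\rtimes\Gal(F_m/F)\big)$ and tested at an arbitrary element $h \in \textbf{H}(F_{(m,i)})$, yield the same value. Both $i\text{-}\res$'s are characterized by the defining identity $\big(i\text{-}\res(f)\big)\circ N_i = f|_{\textbf{G}(F_m)\rtimes\sigma^i}$ (and its analogue for $\textbf{H}$), combined with the fact that $N_i$ induces a bijection between $\textbf{G}(F_m)$-conjugacy classes in $\textbf{G}(F_m)\rtimes\sigma^i$ and conjugacy classes in $\textbf{G}(F_{(m,i)})$. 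So the task reduces to comparing the norm maps $N_i^{\textbf{G}}$ and $N_i^{\textbf{H}}$ on $\textbf{H}(F_m)\rtimes\sigma^i$.

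The key observation is that for $x\in\textbf{H}(F_m)$, the product $x\sigma^i(x)\cdots\sigma^{i(t-1)}(x)$ appearing in the definition of $N_i$ lies in $\textbf{H}(\overline{F})$. Since $\textbf{H}$ is connected and defined over $F$, the Lang--Steinberg theorem applied to the Frobenius $\sigma^d$ on $\textbf{H}(\overline{F})$ lets us choose $\alpha(x)\in\textbf{H}(\overline{F})$ (and not merely in $\textbf{G}(\overline{F})$) satisfying $\alpha(x)^{-1}\sigma^d(\alpha(x))=x\sigma^i(x)\cdots\sigma^{i(t-1)}(x)$. With this choice, the formula
\[
N_i(x,\sigma^i)=\alpha(x)\big(x\sigma^i(x)\cdots\sigma^{i(\frac{m}{d}-1)}(x)\big)\alpha(x)^{-1}
\]
computes the same element of $\textbf{H}(F_{(m,i)})\subseteq\textbf{G}(F_{(m,i)})$ whether we read it as $N_i^{\textbf{H}}(x,\sigma^i)$ or as $N_i^{\textbf{G}}(x,\sigma^i)$; in particular the two norms agree, as elements, on $\textbf{H}(F_m)\rtimes\sigma^i$.

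Given $h\in\textbf{H}(F_{(m,i)})$, by the bijectivity of $N_i^{\textbf{H}}$ on conjugacy classes we pick $x\in\textbf{H}(F_m)$ with $N_i^{\textbf{H}}(x,\sigma^i)$ $\textbf{H}(F_{(m,i)})$-conjugate to $h$. Then
\[
\big(i\text{-}\res^{\textbf{H}}\circ\Res\big)(f)(h)=\big(\Res f\big)(x,\sigma^i)=f(x,\sigma^i),
\]
while by the previous paragraph $N_i^{\textbf{G}}(x,\sigma^i)$ represents the same element $h$, so
\[
\big(\Res\circ i\text{-}\res^{\textbf{G}}\big)(f)(h)=\big(i\text{-}\res^{\textbf{G}}f\big)(h)=f(x,\sigma^i).
\]
The two composites therefore coincide, which proves commutativity.

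There is no real obstacle: the argument is essentially bookkeeping once one notes that the only subtlety in the norm construction, namely the auxiliary element $\alpha(x)$, can be chosen inside $\textbf{H}$ thanks to Lang--Steinberg. The mildly delicate point to flag is that Lang--Steinberg requires $\textbf{H}$ to be connected, which is exactly the hypothesis of the lemma.
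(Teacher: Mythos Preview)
Your argument is correct and is precisely the standard unpacking of why the diagram commutes: the crucial point is that for $x\in\textbf{H}(F_m)$ the Lang--Steinberg theorem (applied to the connected group $\textbf{H}$ with Frobenius $\sigma^d$) lets one choose the auxiliary element $\alpha(x)$ inside $\textbf{H}(\overline{F})$, so that the very same formula computes both $N_i^{\textbf{H}}$ and $N_i^{\textbf{G}}$ on $\textbf{H}(F_m)\rtimes\sigma^i$, and the rest is bookkeeping with class functions. The paper itself does not give a proof at all but simply refers to Gyoja \cite[p.~12, Lemma~3.6]{Gyoja}; your write-up is essentially what that reference contains.
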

\begin{proof}
See \cite[p. 12, Lemma 3.6]{Gyoja} .
\end{proof}
Now let $V, \langle, \rangle$ be a symplectic space over $F$ and let $\textbf{G}=\textbf{GSp}_V$  be the algebraic group of symplectic  similitudes of  $(V, \langle, \rangle)$.  For $0 \leq i \leq m-1$,  write  $\Xi_{F_{(m,i)}}$ for the Weil representation of $\textbf{G}(F_{(m,i)})$
\begin{proposition}\label{liftingGSp}
 There exists a unique representation $\widetilde{\Xi_{F_m}}$ of  $ \textbf{G}(F_m)\rtimes\Gal(F_m/F)$ such that $i$-res($\widetilde{\Xi_{F_m}}$)$=\Xi_{F_{(m,i)}}$ for $0 \leq i \leq m-1$.
\end{proposition}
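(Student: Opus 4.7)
The argument splits naturally into uniqueness, construction of a candidate lift, and a norm-compatibility check.

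\emph{Uniqueness.} By Lemma~\ref{basenorm}(ii), the map $\bigoplus_{i=0}^{m-1}(i\text{-}\mathrm{res})$ is an isomorphism
\[
\mathcal{C}\bigl(\textbf{G}(F_m)\rtimes\Gal(F_m/F)\bigr)\;\simeq\;\bigoplus_{i=0}^{m-1}\mathcal{C}\bigl(\textbf{G}(F_{(m,i)})\bigr)_{\sigma}.
\]
Prescribing the $i$-restrictions to be $\Xi_{F_{(m,i)}}$ therefore determines the character of any lift uniquely, and as a representation of a finite group is determined up to isomorphism by its character, the lift itself is unique.

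\emph{Construction of a candidate.} I would realise $\Xi_{F_m}$ on the Schr\"odinger model $W_-=\C[(V_-)_{F_m}\times X_{F_m}]$ given by (\ref{weilGSp1})--(\ref{weilGSp4}), and define an operator
\[
(T_\sigma F)(y,\psi)\;:=\;F\bigl(\sigma^{-1}(y),\,\psi\circ\sigma\bigr).
\]
Since $\sigma^m=\id$ one has $T_\sigma^{\,m}=\id$, so $T_\sigma$ generates an action of $\Gal(F_m/F)$ on $W_-$. A short check on each of the generators $h(a),u(b),h'(t),\omega$ of $\textbf{G}(F_m)$, using that the factors $\chi_q^+(\det_{V_+}a)$ and $\gamma(\psi^{-1/2})^{-n}$ appearing in (\ref{weilGSp1})--(\ref{weilGSp3}) behave naturally under pullback of $\psi$ by $\sigma$, yields $T_\sigma\,\rho(g)\,T_\sigma^{-1}=\rho(\sigma(g))$. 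Setting $\widetilde{\Xi_{F_m}}(x,\sigma^i):=\rho(x)\,T_\sigma^{\,i}$ therefore extends $\rho$ to a representation of $\textbf{G}(F_m)\rtimes\Gal(F_m/F)$.

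\emph{Norm compatibility --- the main obstacle.} The remaining and decisive step is to verify that this candidate actually has the prescribed $i$-restrictions, i.e.\ that for each $0\le i\le m-1$ and each $x\in\textbf{G}(F_m)$,
\[
\mathrm{tr}\bigl(\rho(x)\,T_\sigma^{\,i}\bigr)\;=\;\Xi_{F_{(m,i)}}\bigl(N_i([x,\sigma^i])\bigr).
\]
The case $i=0$ is tautological. For $i\ge 1$, I would use Lemma~\ref{Indresnorm} to transport the problem along the Bruhat decomposition to the Borel and to its Levi and unipotent parts, where the explicit character formula of G\'erardin for the Weil representation of $\textbf{G}(F_{(m,i)})$ is available. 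Matching the two sides then reduces to (i) the classical Gauss-sum twist identity $\prod_{j=0}^{t-1}\gamma(\psi\circ\sigma^{ij})=\gamma(\psi\circ\nnn_{F_m/F_{(m,i)}})$ and (ii) the analogous multiplicativity of the Legendre symbol $\chi_q^{+}$. This twisted trace/Gauss-sum bookkeeping along Bruhat cells is the technical heart of the argument, and is precisely the content of \cite{HW}.
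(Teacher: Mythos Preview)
The paper's own ``proof'' is a one-line citation to \cite[Theorem 4.2]{HW}, with no argument given. Your proposal is therefore not competing against an in-paper proof but is instead a sketch of what that reference presumably contains. As such it is correct in outline: uniqueness via Lemma~\ref{basenorm}(ii) is immediate, your Galois operator $T_\sigma$ on the Schr\"odinger model is the natural candidate (and the invariance of $\gamma(\psi)$ and $\chi_q^+$ under $\sigma$ makes the intertwining check $T_\sigma\rho(g)T_\sigma^{-1}=\rho(\sigma(g))$ go through), and you correctly identify the twisted-trace/norm identity as the substantive step and defer it to \cite{HW}. Since both you and the paper ultimately rest on the same external reference for the key computation, your proposal is essentially aligned with the paper, only more explicit about the surrounding scaffolding.
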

\begin{proof}
See  \cite[Theorem 4.2]{HW}.
\end{proof}
\section{The decomposition of the Weil representation of $\GL_2(F)\!\times\!\GL_2(F)\!\times\!\GL_2(F)$ }\label{demp1}

\subsection{} We give some notations and formulate  the mainly studied  representation in this section.

 In this section, we use the following notations: $G=\GL_2(F)$, $H=G \times G$, $ B=\{\begin{pmatrix}
  a& b\\
  0&d
\end{pmatrix} \in G \}
, \ N= \{
 \begin{pmatrix}
  1& b \\
  0&1
\end{pmatrix} \in G \}, \ T=\{\begin{pmatrix}
  a& 0 \\
  0&d
\end{pmatrix} \in G\},  \  Z=\{\begin{pmatrix}
  a& 0 \\
  0&a  \end{pmatrix} \in G \}$; $h(r)=\begin{pmatrix}
  r& 0 \\
  0&r^{-1}
\end{pmatrix}$, $u(b)=\begin{pmatrix}
  1& b \\
  0&1
\end{pmatrix}$, $h'(t)=\begin{pmatrix}
  1& 0 \\
  0&t
\end{pmatrix}$, $\omega'=\begin{pmatrix}
  0& 1 \\
  1&0
\end{pmatrix}$, $\omega=\begin{pmatrix}
  0& 1 \\
  -1&0
\end{pmatrix}$  in $G$, $S_3=$ the permutation group of $3$ variables.\\

  Let $V$ be a vector space over $F$ of dimension $2$, endowed with a non-degenerate symplectic form $\langle,\, \rangle$. Let $\{e_1,e_2\}$ be a symplectic basis of $V$\, i.e. $\langle e_1, e_2 \rangle=1, \, \langle e_2, e_1\rangle =-1 $. We attach  the vector space $V^{\otimes 3}= V\otimes_F V\otimes_F V$ with the natural  symplectic
form $\langle, \rangle \otimes \langle, \rangle \otimes \langle, \rangle $, so there exists a homomorphism  $p$ from $\Big(\GSp(V) \times \GSp(V) \times \GSp(V)\Big)\rtimes S_3$  to $\GSp(V^{\otimes 3})$. The above group $S_3$ acts on $V\otimes V\otimes V$ by permutations.  By  the fixed basis $\{ e_1, e_2\}$ of V and $\{e_i\otimes e_j \otimes e_k| 1\leq i,j,k \leq 2\}$ of $V^{\otimes 3}$, we could identify the group $G$ with $\GL(V)$, and  the group $\GSp_8(F)$ with $\GSp(V^{\otimes 3})$.

   Let $\rho$ be the Weil representation of $\GSp(V^{\otimes 3})$.  Through the above morphism $p$, we get a representation $\pi'$ of the group $\bigg(\GSp(V)\times \GSp(V) \times \GSp(V)\bigg)\rtimes S_3$. Let $\pi$ denote  the restriction  of $\pi'$ to $\GSp(V)\times \GSp(V) \times \GSp(V)$. Write $_+V^{\otimes 3}=\{x\in V^{\otimes 3} | x\in Fe_1\otimes V \otimes V\}$ and  $_-V^{\otimes 3}=\{y\in V^{\otimes 3} | y\in Fe_2\otimes V \otimes V\}$. Every element $y\in {}_-V^{\otimes 3}$ has the form $y=\sum_{j,k=1}^2 a_{j,k}e_2\otimes e_j \otimes e_k$, which corresponds to a matrix
$m=\begin{pmatrix}
  a_{11}& a_{12}\\
  a_{21}&a_{22} \end{pmatrix} $. So we could identify $ _-V^{\otimes 3}$ with the matrix ring $M_2(F)$ as  vector space over $F$. The representation $\pi$ of $G \times G \times G$ can be realized in the  vector space $W=\C[M_2(F) \times X_F]$ of complex functions on $M_2(F)  \times X_F$.

\begin{proposition}\label{Mode2}
The representation $(\pi, G\times G\times G, W)$ is given by the following formulas:\\
\begin{equation}\label{weilGGG1}
 \big(\pi[h(a) , 1 , 1]f\big)(m,\psi)=f(am, \psi),
\end{equation}
\begin{equation}\label{weilGGG2}
  \big(\pi[u(b) , 1 , 1]f\big)(m,\psi)= \psi(b\, det (m)) f(m,\psi),
\end{equation}
\begin{equation}\label{weilGGG3}
\big(\pi[h'(t) , 1 , 1]f\big)(m,\psi)=f(m, \psi^{t^{-1}}),
\end{equation}
\begin{equation}\label{weilGGG4}
\big(\pi[\omega , 1 , 1]f\big)(m,\psi)= q^{-2} \sum_{n\in M_2(F)} \psi(B(m,n)) \,f(n,\psi),
\end{equation}
\begin{equation}\label{weilGGG5}
\big(\pi[1 , g_2 , g_3]f\big)(m,\psi)= f(\det (g_2g_3)g_2^{-1}m(g_3^{-1})^t, \psi^{det(g_2g_3)^{-1}}),
\end{equation}
where $g_2,g_3 \in G, m\in M_2(F)$, $g_3^t$= the transpose  of $g_3$,
  $B(m,n)=m_{11}n_{22}+m_{22}n_{11}-m_{12}n_{21}-m_{21}n_{12}$ for $m=\begin{pmatrix}
  m_{11}& m_{12}\\
  m_{21}&m_{22} \end{pmatrix} $, $n =\begin{pmatrix}
  n_{11}& n_{12}\\
  n_{21}& n_{22} \end{pmatrix} \in M_2(F)$.
\end{proposition}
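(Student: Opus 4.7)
The plan is to derive each of (2.1)--(2.5) by specializing the Schr\"odinger-model formulas (1.1)--(1.4) to the images of the chosen generators of $G\times G\times G$ under the embedding $p$. First I would fix the polarization $V^{\otimes 3} = {}_+V^{\otimes 3} \oplus {}_-V^{\otimes 3}$, identify both halves with $V\otimes V \simeq M_2(F)$ via $e_i\otimes e_j\otimes e_k \leftrightarrow E_{jk}$, and verify by expanding $\langle e_j, e_{j'}\rangle \langle e_k, e_{k'}\rangle$ that the induced pairing between ${}_+V^{\otimes 3}$ and ${}_-V^{\otimes 3}$ agrees with the bilinear form $B(m,n)$ in the statement.

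For each of $[h(a),1,1]$, $[u(b),1,1]$, $[h'(t),1,1]$, $[\omega,1,1]$, I would write the image in $\GSp(V^{\otimes 3})$ in block form and identify it with the corresponding generator from Section~1.2. The element $[h(a), 1, 1]$ becomes $h(a\cdot\id)$ with $\det_{V_+} = a^4$ a square, so the quadratic-character factor in (1.1) is trivial and the action reduces to $f(am,\psi)$. The element $[u(b), 1, 1]$ equals $u(B)$ for the symmetric map $v\otimes w \mapsto b\cdot v\otimes w$; enumerating the four nonzero terms of $\langle By, y\rangle$ gives $2b\det m$, and (1.2) yields $\psi(b\det m) f(m,\psi)$. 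The element $[h'(t), 1, 1]$ is exactly the $\GSp_8$-generator $h'(t)$ (similitude $t$), and (1.4) gives $f(m, \psi^{t^{-1}})$. The element $[\omega, 1, 1]$ is the standard $\omega$ on $V^{\otimes 3}$; applying (1.3) with $n = 4$, the factor $\gamma(\psi^{-1/2})^{-4}$ simplifies to $q^{-2}$ using $\gamma(\psi)^2 = \chi_q^+(-1)q$, while $\langle z, \omega^{-1}(y)\rangle$ reduces to $B(m, n)$ under our identification.

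The substantive step is (2.5). The key ingredient is the identity $g^{\vee} = \det(g)^{-1}\cdot g$ for $g\in \GL_2(F)$ with respect to the symplectic form on $V$, which follows from the defining relation $\langle gv, g^\vee w\rangle = \langle v, w\rangle$ combined with $\langle gv, gw\rangle = \det(g)\langle v, w\rangle$. Thus for $a = g_2\otimes g_3$ acting on $V\otimes V \simeq {}_+V^{\otimes 3}$, one has $a^\vee = g_2^\vee \otimes g_3^\vee = \lambda^{-1} a$ where $\lambda = \det(g_2 g_3)$. The image of $[1, g_2, g_3]$ in $\GSp(V^{\otimes 3})$ is block-diagonal with both blocks equal to $a$ and has similitude $\lambda$; since $\lambda\cdot a^\vee = a$, it factors cleanly as $h(a)\cdot h'(\lambda)$. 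Composing (1.4) and (1.1)---with $\chi_q^+(\det_{V_+} a) = \chi_q^+(\lambda^2) = 1$ dropping out---yields $f(a^{-\vee} m, \psi^{\lambda^{-1}}) = f(\det(g_2 g_3) g_2^{-1} m (g_3^{-1})^t, \psi^{\det(g_2 g_3)^{-1}})$, which is exactly (2.5).

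The main obstacle is recognising the clean block factorisation $[1, g_2, g_3] = h(g_2 \otimes g_3)\cdot h'(\det(g_2 g_3))$: a naive Bruhat decomposition of $g_2$ and $g_3$ would force one to track several Gauss-sum contributions, but the contragredient identity $g^\vee = \det(g)^{-1} g$ collapses the whole computation to a single composition of (1.1) and (1.4).
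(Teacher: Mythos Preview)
Your proposal is correct and follows essentially the same route as the paper. The paper likewise dismisses (\ref{weilGGG1})--(\ref{weilGGG4}) as direct specializations of (\ref{weilGSp1})--(\ref{weilGSp4}), and for (\ref{weilGGG5}) it uses exactly your factorization: writing the image of $[1,g_2,g_3]$ as $h'(\det(g_2g_3))\,h(g_2\otimes g_3)$ (the two factors commute, so your order $h(a)\,h'(\lambda)$ is equivalent), invoking the identity $(g_2\otimes g_3)^{\vee}=\det(g_2g_3)^{-1}(g_2\otimes g_3)$, and noting $\chi_q^+(\det(g_2\otimes g_3))=\chi_q^+(\det(g_2g_3)^2)=1$.
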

\begin{proof}[Proof]
 (\ref{weilGGG1})---(\ref{weilGGG4}) come directly from the formulas (\ref{weilGSp1})---(\ref{weilGSp4}) in Section \ref{Prel}. Consider now the formula (\ref{weilGGG5}). Recall, for $g\in G$, $g\cdot e_1:= (e_1,e_2) g \begin{pmatrix}
  1\\
 0 \end{pmatrix}$ and $g\cdot e_2:= (e_1,e_2) g \begin{pmatrix}
  0\\
 1 \end{pmatrix}$. By the fixed basis $\{ e_2 \otimes e_j\otimes e_k| 1 \leq j, k \leq  2\}$, we obtain $g_2 \otimes g_3 \cdot m:= g_2 m g_3^t$ for $g_2, g_3 \in G, m \in M_2(F)$. Then, by (\ref{weilGSp1}), (\ref{weilGSp4}) in Section \ref{Prel}, we have
 $$\Big( \pi[ 1, g_2, g_3]f\Big) (m, \psi)=\rho \bigg( \begin{pmatrix}
  1& 0\\
  0& \det(g_2g_3) \end{pmatrix} \begin{pmatrix}
  g_2 \otimes g_3& 0\\
  0& \det(g_2g_3)^{-1} g_2 \otimes g_3 \end{pmatrix}\bigg) f(m, \psi)$$
 $$= \rho \bigg( \begin{pmatrix}
  g_2 \otimes g_3& 0\\
  0& \det(g_2g_3)^{-1} g_2 \otimes g_3 \end{pmatrix}\bigg) f (m, \psi^{\det(g_2g_3)^{-1}})$$
$$= \chi_q^+\Big( \det(g_2 \otimes g_3)^2\Big) f(g_2^{-1} \otimes g_3^{-1} \det(g_2g_3) \cdot m, \psi^{\det(g_2g_3)^{-1}})
= f( \det(g_2g_3)g_2^{-1} m (g_3^{-1})^t, \psi ^{\det(g_2g_3)^{-1}}).$$
\end{proof}

\subsection{} To decompose the representation $\pi$, it involves to describe the $1\times G \times G$-invariant  part of the vector space $W$.

 We consider the set $\mathcal{S}=\{ (\pi_1, \pi_2)| \textrm{ for } i=1,2, (\pi_i,V_i) \in \Rep(G)$ such that $(\pi_1 \otimes \pi_2)|_Z=\id_{V_1 \times V_2}\}$. For each pair $(\pi_1, \pi_2) \in \mathcal{S}$, it determines a representation $(\pi_1 \otimes \pi_2, V_1 \otimes V_2)$ of the group $H$. We  write $\Irr_0(H)$ for  the set of the isomorphism classes of all irreducible representations $(\pi_1\otimes \pi_2, V_1 \otimes V_2)$ of $H$  for which $(\pi_1, \pi_2) \in \mathcal{S}$.

Now we concentrate on the decomposition of the representation $(\pi, G\times G \times G, W)$. Following the method  in \cite{Andr},  we first associate a representation $(\pi_0', G, W[\pi_1 \otimes \pi_2])$ to any representation $\pi_1 \otimes \pi_2 \in \Irr_0(H)$, where the vector space $W[\pi_1 \otimes \pi_2]$  consists of  all functions $f: M_2(F) \times X_F \longrightarrow V_1 \otimes V_2$ such that
\begin{equation}
 f(\det(g_1^{-1}g_2^{-1})g_1mg_2^t, \psi^{\det(g_1g_2)})= \big(\pi_1(g_1)\otimes \pi_2(g_2)\big)f(m,\psi),
\end{equation}
for $(g_1, g_2) \in H, m\in M_2(F), \psi \in X_F$, and the action of $G$ on $W[\pi_1 \otimes \pi_2]$  is given by the formulas (\ref{weilGGG1})--- (\ref{weilGGG4}) in Proposition \ref{Mode2} .

\begin{proposition}
For the representation $(\pi,  G\times G \times G, W)$, we have
$$ W=\bigoplus_{\pi_1\otimes \pi_2 \in Irr_0(H)}W[\pi_1\otimes \pi_2] \otimes \check{V}_1 \otimes \check{V}_2.$$
\end{proposition}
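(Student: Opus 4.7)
The plan is to realize $W=\C[M_2(F)\times X_F]$ as an $H$-module via the restriction of $\pi$ to $1\times H$ (formula (\ref{weilGGG5})), and then to apply the standard isotypic decomposition for representations of the finite group $H$. Since $\Rep(H)$ is semisimple, one has
\begin{equation*}
W \;\cong\; \bigoplus_{\tau \in \Irr(H)} V_\tau \otimes \Hom_H(V_\tau, W),
\end{equation*}
and the content of the proposition is to identify the multiplicity spaces $\Hom_H(V_\tau, W)$ with the spaces $W[\tau]$ (up to contragredient) and to see that only $\tau \in \Irr_0(H)$ contribute.

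First, I would set up the identification of multiplicity spaces. Given $\tau = \pi_1 \otimes \pi_2 \in \Irr(H)$ and $\phi \in \Hom_H(V_1\otimes V_2, W)$, define a function $f_\phi : M_2(F)\times X_F \to \check V_1 \otimes \check V_2$ by $f_\phi(m,\psi)(v) := \phi(v)(m,\psi)$. The map $\phi \mapsto f_\phi$ is clearly linear with linear inverse $f \mapsto \bigl(v \mapsto ((m,\psi)\mapsto f(m,\psi)(v))\bigr)$. The key verification is that $\phi$ being $H$-equivariant translates, via (\ref{weilGGG5}), exactly to the transformation law defining $W[\check\pi_1 \otimes \check\pi_2]$: starting from $\phi(\tau(g_1,g_2)v) = \pi[1,g_1,g_2]\phi(v)$, evaluating at $(m,\psi)$ and using the explicit action yields $(\check\pi_1(g_1^{-1}) \otimes \check\pi_2(g_2^{-1})) f_\phi(m,\psi) = f_\phi(\det(g_1g_2) g_1^{-1} m (g_2^{-1})^t, \psi^{\det(g_1g_2)^{-1}})$, which after the substitution $g_i \mapsto g_i^{-1}$ becomes precisely the defining equation of $W[\check\pi_1\otimes \check\pi_2]$.

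Next, I would identify the indexing set. A representation $\tau=\pi_1\otimes\pi_2$ occurs in $W$ only if the corresponding central constraint is compatible with the action (\ref{weilGGG5}); checking the action of the subgroup of $1\times Z\times Z$ that fixes $W$ pointwise gives exactly the condition $(\pi_1\otimes\pi_2)|_Z = \id_{V_1\otimes V_2}$, i.e.\ $\tau \in \Irr_0(H)$. Combining the two steps, the isotypic decomposition reads
\begin{equation*}
W \;\cong\; \bigoplus_{\pi_1\otimes\pi_2\in \Irr_0(H)} (V_1 \otimes V_2) \otimes W[\check\pi_1 \otimes \check\pi_2].
\end{equation*}
Since $\Irr_0(H)$ is stable under taking contragredients, reindexing $\pi_i \leadsto \check\pi_i$ yields exactly the stated formula.

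The only nontrivial step is the bookkeeping translating $H$-equivariance of $\phi$ into the transformation law of $W[\pi_1\otimes\pi_2]$: one has to track carefully the factors $\det(g_1g_2)$ in front of $m$ and the character twist $\psi\mapsto\psi^{\det(g_1g_2)^{-1}}$ coming from (\ref{weilGGG5}), and to observe that the contragredient on $V_1\otimes V_2$ becomes the direct representation on $W[\pi_1\otimes\pi_2]$ under the reindexing. Once this is checked, everything is a formal consequence of the semisimplicity of $H$-modules together with Schur's lemma.
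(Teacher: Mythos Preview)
Your approach is correct and essentially the same as the paper's. Both arguments use semisimplicity of the $H$-action on $W$ to write the isotypic decomposition, identify the multiplicity space for $\check\pi_1\otimes\check\pi_2$ with a space of $V_1\otimes V_2$-valued functions satisfying the transformation law, and then reindex by the contragredient. The paper phrases the multiplicity space as the invariants $[W\otimes V_1\otimes V_2]^{G\times G}$ (citing \cite{MVW}), while you write it as $\Hom_H(V_1\otimes V_2,W)$ and give the explicit map $\phi\mapsto f_\phi$; these are standard equivalent formulations, and your version makes the bookkeeping with the contragredient and the $\Irr_0(H)$ constraint slightly more explicit.
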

\begin{proof}[Proof]
Since the representation $(\pi, W)$ of $G \times G \times G$ is semi-simple and arises from the restriction of $\rho$, we have
$$W=\oplus_{\check{\pi}_1 \otimes \check{\pi}_2\in Irr_0(H)} \, W_{\check{\pi}_1\otimes \check{\pi}_2} \otimes  \check{V}_1 \otimes \check{V}_2.$$
Here $W_{\check{\pi}_1 \otimes \check{\pi}_2}$ is the greatest $\check{\pi}_1 \otimes\check{\pi}_2$-isotypic quotient of $W$ (cf. \cite[p. 46, III.4]{MVW}). Note that
\begin{equation}\label{decompositionnnn}
 W_{\check{\pi}_1 \otimes \check{\pi}_2} \simeq [ W\otimes (V_1 \otimes V_2)]^{ G\times G}
 \simeq W[V_1 \otimes V_2]
\end{equation}
as $G$-module.  In (\ref{decompositionnnn}), we treat $W$ as $G\times G$-module via the embedding $G \times G \simeq 1 \times G \times G \hookrightarrow G \times G \times G$.
\end{proof}

 Recall the Cartan involution:
 $\theta: G\longrightarrow G; g\longmapsto (g^t)^{-1}.$
 It is well-known that
 $\sigma  \simeq {(\sigma\circ \theta)}^{\vee}$ for  $\sigma \in \Irr(G)$.
 Let $\Big([\pi_1, \pi_2\circ t], \Hom_{\C}(V_2, V_1)\Big)$ be a representation of $G \times G$, defined by
 $$[\pi_1, \pi_2\circ t](g_1, g_2)(\varphi )= \pi_1(g_1) \circ \varphi \circ \pi_2(g_2^t),  \quad g_1, g_2 \in G, \varphi  \in \Hom_{\C}(V_2, V_1).$$
 Define an isomorphism of vector spaces:
  $$\lambda: V_1 \otimes V_2^{\star} \longrightarrow \Hom_{\C} (V_2, V_1);$$
$$ v_1\otimes v_2^{\star} \longrightarrow ( \varphi_{v_1\otimes v_2^{\star}}: v_2\longmapsto \langle v_2^{\star}, v_2 \rangle v_1).$$
It can be checked that $\lambda$ defines an intertwining operator between $(\pi_1 \otimes (\pi_2 \circ \theta)^{\vee}, V_1 \otimes V_2^{\star})$ and $\big([\pi_1, \pi_2 \circ t], \Hom_{\C}(V_2, V_1)\big)$.   To simplify calculation, we replace $(\pi_1 \otimes \pi_2, V_1 \otimes V_2)$ with $( [\pi_1, \pi_2 \circ t], \Hom_{\C}(V_2, V_1))$ in (\ref{decompositionnnn}), and get an isomorphic representation of $(\pi_0', G, W[\pi_1 \otimes \pi_2])$, say $(\pi_0, G, W[\pi_1, \pi_2])$, where $W[\pi_1, \pi_2]$ is a vector space over $\C$ consisting of all functions $f: M_2(F) \times X_F \longrightarrow \Hom_{\C}(V_2, V_1)$ such that
 \begin{equation}\label{star}
 f(\det(g_1^{-1}g_2^{-1})g_1 m g_2^t, \psi^{det(g_1g_2)})= \pi_1(g_1) \circ f(m,\psi) \circ \pi_2(g_2^t), \qquad g_1, g_2\in G,
\end{equation}
and the action of $G$ on $W[\pi_1,\pi_2]$ arises naturally from the above formulas (\ref{weilGGG1})---(\ref{weilGGG4}) in Proposition \ref{Mode2}.

\subsection{}We continue the above discussion, and determine the dimension of  the vector space $W[\pi_1, \pi_2]$.

For $\pi_1 \otimes \pi_2 \in \Irr_0(H),$ we write $W[\pi_1, \pi_2](\xi )= \{f(\xi)| f \in W[\pi_1, \pi_2], \xi\in M_2(F)\times X_F\}$. Now we define an $H$-action on the set $M_2(F) \times X_F$ as follows:
\begin{equation}
(g_1, g_2) (m, \psi):= \big(\det(g_1^{-1}g_2^{-1})g_1mg_2^t, \psi^{\det(g_1g_2)}\big),
\end{equation}
where $ (g_1, g_2) \in H, \psi\in X_F, m\in M_2(F)$. It is observed that $W[\pi_1, \pi_2](\xi)
=\text{Fix}_{\Hom_{\C}(V_2, V_1)}(\stab_H(\xi))$  for $\xi \in M_2(F)\times X_F$,
more precisely
\begin{equation}\label{equationsimple}
 W[\pi_1, \pi_2](\xi)= \{\varphi: V_2 \longrightarrow V_1|
\pi_1(g_1)\circ \varphi =\varphi \circ \pi_2 ( (g_2^{t})^{-1}), \quad  (g_1, g_2) \in \text{Stab}_H(\xi)\}.
\end{equation}
Let us determine the $H$-orbits in  $M_2(F)\times X_F$. They are of the following three kinds:
\begin{itemize}
\item[(i)] Orbit $\{ \xi_a\}$, where $\xi_a= \big(\begin{pmatrix}
  1& 0 \\
  0&1
\end{pmatrix}, \phi^a) \quad $ for any $a\in F^{\times}$;
\item[(ii)] Orbit $\{\eta \}$, where $\eta=(\begin{pmatrix}
  1& 0\\
  0&0
\end{pmatrix}, \phi\big)$;
\item[(iii)] Orbit $\{ \delta \}$, where $ \delta= \big(\begin{pmatrix}
  0& 0 \\
  0&0
\end{pmatrix}, \phi\big)$.
\end{itemize}
By straightforward calculation, the corresponding stabilizer of the given representative element in each orbit has the following form:
\begin{itemize}
\item[(i)] $ \Stab_H(\xi_a)= \{(g, (g^{-1})^t)| g\in G\}$;
\item[(ii)]$\Stab_H(\eta)= \{ (sn_1, s^{-1}n_2)| s\in T, n_1, n_2 \in N \}$;
\item[(iii)] $\Stab_H(\delta)=\{ (g_1, g_2)| $  $g_1, g_2 \in G$ and $\det (g_1g_2)=1 \}$.
\end{itemize}
To obtain the dimension of the vector space $W[\pi_1, \pi_2]$, we state the lemma:
\begin{lemma}\label{dimension}
(1) $W[\pi_1, \pi_2](\xi_a) \neq 0$ if and only if $\pi_1 \simeq \pi_2$, in which case $\dim_{\C}W[\pi_1, \pi_2](\xi_a)=1$;\\
(2) $W[\pi_1, \pi_2](\eta)=0$ except the following cases:
   \begin{itemize}
   \item[(a)] $\dim_{\C} W[\pi_{\chi_1, \chi_2}, \pi_{\chi_1, \chi_2}](\eta) =2$,
   \item[(b)] $\dim_{\C} W[\psi\cdot 1_G, \psi\!\cdot\!1_G](\eta)=1$,
   \item[(c)] $\dim_{\C} W[\psi\!\cdot\!\St_G, \psi\!\cdot\!\St_G](\eta)=1$,
   \item[(d)] $\dim_{\C} W[\psi\!\cdot\!1_G, \psi\!\cdot\!\St_G](\eta)=1$,
   \item[(e)] $\dim_{\C} W[\psi\!\cdot\!\St_G, \psi\!\cdot\!1_G](\eta)=1$,
\end{itemize}
for the characters $\chi_1 \neq \chi_2$, $ \psi$ of $F^{\times}$;\\
(3) $W[\pi_1, \pi_2](\delta)=0$ except $\pi_1=\pi_2= \psi\!\cdot\!1_G$, in that case $\dim_{\C}W[\psi\cdot 1_G, \psi\!\cdot\!1_G](\delta)=1$
for any character $ \psi$ of $F^{\times}$.
\end{lemma}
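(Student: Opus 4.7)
The plan is to reduce, for each orbit, equation (\ref{equationsimple}) to a standard representation-theoretic $\Hom$ computation that can be evaluated using Proposition \ref{Rest1} and Schur's lemma.

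For part $(1)$, substituting $(g_1,g_2)=(g,(g^{-1})^t)$ into (\ref{equationsimple}) and using $(g_2^t)^{-1}=g$, the condition on $\varphi$ becomes $\pi_1(g)\circ\varphi=\varphi\circ\pi_2(g)$ for every $g\in G$. Hence $W[\pi_1,\pi_2](\xi_a)=\Hom_G(V_2,V_1)$, and the claim follows from Schur's lemma applied to the irreducibles $\pi_1,\pi_2$.

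For part $(2)$, I would decompose an element $(sn_1,s^{-1}n_2)\in\Stab_H(\eta)$ and impose the resulting conditions one family at a time. Setting $s=n_2=1$ and $n_1\in N$ forces $\Im(\varphi)\subseteq V_1^N$; setting $s=n_1=1$ and $n_2\in N$ (so that $(g_2^t)^{-1}$ ranges over the opposite unipotent $N^-$) forces $\varphi$ to be right-invariant under $\pi_2(N^-)$ and hence to factor through $V_2/(\pi_2(N^-)-1)V_2\simeq V_2^{N^-}$; and taking $n_1=n_2=1$, $s\in T$ gives $\pi_1(s)\circ\varphi=\varphi\circ\pi_2(s)$ since $s^t=s$. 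Consequently $W[\pi_1,\pi_2](\eta)\cong\Hom_T(V_2^{N^-},V_1^N)$. Proposition \ref{Rest1} describes $V^N$ as a $T$-module for every $\pi\in\Irr(G)$, and conjugation by $\omega$ identifies $V^{N^-}$ with the Weyl-translate of $V^N$, which is Weyl-invariant in each of the cases that arise. A cuspidal $\pi_\theta$ is excluded immediately because $(\pi_\theta)^N=0$; for the remaining irreducibles, the central-character constraint defining $\Irr_0(H)$ together with a character-by-character comparison of the semisimple $T$-modules produces exactly the five listed cases with the stated dimensions. In particular two distinct principal series with equal central character cannot share a $T$-character in their Jacquet modules, which is what kills all other mixed-type or cross-principal-series combinations.

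For part $(3)$, writing $h=(g_2^t)^{-1}$ the stabilizer condition becomes $\pi_1(g_1)\circ\varphi=\varphi\circ\pi_2(h)$ for all $g_1,h\in G$ with $\det g_1=\det h$. The diagonal specialization $g_1=h$ puts $\varphi\in\Hom_G(V_2,V_1)$, so Schur's lemma forces $\pi_1\simeq\pi_2$ and $\dim W[\pi_1,\pi_2](\delta)\le 1$. Specializing further to $g_1=I$, $h\in\SL_2(F)$ and then to $h=I$, $g_1\in\SL_2(F)$ forces $\varphi$ to vanish on $(\pi_2(\SL_2(F))-1)V_2$ and to have image in $V_1^{\SL_2(F)}$. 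Since $\SL_2(F)$ is normal in $G$, for an irreducible $\pi$ the subspace $V^{\SL_2(F)}$ is $G$-stable, hence either $0$ or all of $V$; it equals $V$ precisely when $\pi$ factors through the determinant, i.e.\ $\pi=\psi\cdot 1_G$ for some character $\psi$ of $F^{\times}$. Thus only $\pi_1=\pi_2=\psi\cdot 1_G$ survives, with the $\Hom_G$ space of dimension one. The main obstacle throughout is the bookkeeping of transpose and inverse conventions when unpacking $\Stab_H(\xi)$, and in part $(2)$ being meticulous enough with the $T$-characters of the Jacquet modules to exclude pairs of distinct principal series that happen to share a central character.
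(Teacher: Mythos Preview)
Your argument is correct and follows essentially the same route as the paper: reduce $W[\pi_1,\pi_2](\xi)$ to a $\Hom$-space via the explicit stabilizer, then evaluate using Schur's lemma and the $T$-module structure of Jacquet modules. You are in fact slightly more careful than the paper in part~(2): the condition on the $\pi_2$ side genuinely involves $N^-$ rather than $N$ (the paper writes $\Hom_T(V_2^N,V_1^N)$ directly), and your Weyl-conjugation remark is exactly what justifies the identification $V_2^{N^-}\simeq V_2^{N}$ as $T$-modules that makes the two formulations agree.
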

\begin{proof}
1) By the formula (\ref{equationsimple}), the vector space  $W[\pi_1, \pi_2](\xi_a)$ consists of  the functions $\varphi: V_2 \longrightarrow V_1$
such that
$ \pi_1(g_1)\circ \varphi = \varphi\circ  \pi_2((g_2^{-1})^t)$ for $(g_1, g_2) \in\Stab_H(\xi_a)$.
Hence  $W[\pi_1, \pi_2](\xi_a)$  is isomorphic to $\Hom_G(V_2, V_1)$.\\
2) Note that
$W[\pi_1, \pi_2](\eta) \simeq \Hom_T( V_2 ^N, V_1^N).$
Therefore $W[\pi_1, \pi_2](\eta)=0$ unless $\pi_1, \pi_2$ both are induced representations. Consider the induced representation
 $(\pi_{\chi_1, \chi_2}, V)=\Ind_B^G\, (\chi_1 \otimes \chi_2)$ for $ \chi_1, \chi_2 \in \Irr(F^{\times})$.  The vector space $V^N$ is generated by the following two functions $f_{\chi_1, \chi_2}$ and $g_{\chi_1, \chi_2}$, where
   \begin{itemize}
   \item[1.] the support of $f_{\chi_1, \chi_2}$ belongs to $B$, and $f_{\chi_1, \chi_2}(tn)=\chi_1 \otimes \chi_2(t)$ for $t\in T, n\in N$;
   \item[2.] the support of $g_{\chi_1, \chi_2}$ belongs to $B \omega' N$, and $g_{\chi_1, \chi_2}(tn_1 \omega' n_2)=\chi_1 \otimes \chi_2(t)$ for $t\in T,  n_1, n_2 \in N$.
\end{itemize}
  The action of $T$ on $V^N=\{ f_{\chi_1, \chi_2}, g_{\chi_1, \chi_2}\}$ is simply given by the formulas:
$$t \cdot f_{\chi_1, \chi_2}= \chi_1 \otimes \chi_2(t) f_{\chi_1, \chi_2} \quad \textrm{ and } \quad t\cdot g_{\chi_1, \chi_2}= \chi_2 \otimes \chi_1 (t) g_{\chi_1, \chi_2}, \quad  t\in T.$$
Thus, $\dim_{\C}\Hom_T(\pi_{\chi_1, \chi_2}^N, \pi_{\chi_1, \chi_2}^N)=2$ for $\chi_1 \neq \chi_2 \in \Irr(F^{\times})$,
 and  (a) follows. On the other hand $\dim_{\C}\Hom_T(\pi_{\psi, \psi}^N, \pi_{\psi, \psi}^N)=4$ for $\psi\in \Irr(F^{\times})$.
Clearly $\big(\St_G\big)^{N}=\C\big( q  f_{1_G, 1_G}- g_{1_G, 1_G}\big)$ and $\big(1_G\big)^N= \C \big( f_{1_G, 1_G} + g_{1_G, 1_G}\big)$. So  (b) and (c) hold and $\dim_{\C} W[\psi\!\cdot\!1_G, \psi\!\cdot\!\St_G](\eta) =\dim_{\C} W[\psi\!\cdot\!\St_G, \psi\!\cdot\!1_G](\eta)=1.$\\
3) By the formula (\ref{equationsimple}),
$W[\pi_1, \pi_2](\delta)$ consists of the functions
 $\varphi: V_2 \longrightarrow V_1$
such that
 $ \pi_1(g_1) \circ \varphi = \varphi \circ \pi_2((g_2^{-1})^t)$ for $  (g_1, g_2)\in \Stab_H(\delta)$.
 Since  $\Stab_H(\delta)=\{ (g_1, g_2)| $  $g_1, g_2 \in G$ and $\det (g_1g_2)=1 \}$, we know $\varphi=0$ except that
$\pi_1=\psi_1 \circ \det,  \quad \pi_2=\psi_2 \circ \det, $
 in which case,  $W[\pi_1, \pi_2](\delta)\simeq \Hom_{F^{\times}} (\psi_2, \psi_1)$,  and we get the result.
\end{proof}

\begin{corollary}\label{corollary1}
For any irreducible representation $\pi_1 \otimes \pi_2 \in \Irr_0(H)$, the dimension of the representation $(\pi_0, G, W[\pi_1, \pi_2])$ has the following form:
\begin{itemize}
\item[(i)] $\dim_{\C} W[\pi_{\chi_1, \chi_2}, \pi_{\chi_1, \chi_2}]=q+1$,
\item[(ii)]$\dim_{\C} W[\psi \cdot \St_G, \psi\cdot \St_G]=q$,
\item[(iii)] $\dim_{\C} W[\psi \cdot 1_G, \psi \cdot 1_G]=q+1$,
\item[(iv)] $\dim_{\C} W[ \pi_{\theta }, \pi_{\theta}]= q-1$,
\item[(v)] $\dim_{\C} W[\psi \cdot \St_G, \psi \cdot 1_G]= 1$,
\item[(vi)] $ \dim_{\C} W[\psi \cdot 1_G, \psi \cdot \St_G]=1$,
\end{itemize}
for the characters $\chi_1\neq \chi_2, \psi$ of $F^{\times}$, the regular character $\theta$ of $E^{\times}$. And the above lists are
all the representations $\pi_1\otimes \pi_2 \in \Irr_0(H)$, such that $W[\pi_1,\pi_2] \neq 0$.
\end{corollary}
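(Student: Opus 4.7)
The plan is to exploit the orbit decomposition of $M_2(F)\times X_F$ under $H=G\times G$. A function $f\in W[\pi_1,\pi_2]$ satisfying the equivariance rule (\ref{star}) is completely determined by its value at one chosen representative in each $H$-orbit, and that value must lie in the stabilizer-fixed subspace $W[\pi_1,\pi_2](\xi)$ described by (\ref{equationsimple}). Conversely, any choice of such value extends uniquely to a well-defined equivariant function on the orbit. This immediately gives the identity
\[\dim_{\C}W[\pi_1,\pi_2]\;=\;\sum_{\text{orbits }\mathcal{O}}\dim_{\C}W[\pi_1,\pi_2](\xi_{\mathcal{O}}),\]
and Lemma \ref{dimension} already computes each summand; so the real task is to count orbits correctly.

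The only mildly nontrivial input is the observation that the $q-1$ elements $\xi_a=(I,\phi^a)$ represent $q-1$ \emph{distinct} orbits. Indeed, applying $(g_1,g_2)$ to $\xi_a$ produces $(\det(g_1g_2)^{-1}g_1g_2^t,\phi^{a\det(g_1g_2)})$, on which the quantity $(\det m)\cdot b$ (where the pair is $(m,\phi^b)$) equals $a$ identically. Thus $a$ is an $H$-invariant and two $\xi_a,\xi_{a'}$ lie in the same orbit iff $a=a'$. The orbits of types (ii) and (iii) are single orbits. Moreover the stabilizer of $\xi_a$ is $\{(g,(g^{-1})^t):g\in G\}$ independently of $a$, so the fixed-space dimension supplied by Lemma \ref{dimension}(1) is the same constant for every one of the $q-1$ type-(i) orbits. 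The identity above therefore specializes to
\[\dim_{\C}W[\pi_1,\pi_2]\;=\;(q-1)\,\dim_{\C}W[\pi_1,\pi_2](\xi_1)\;+\;\dim_{\C}W[\pi_1,\pi_2](\eta)\;+\;\dim_{\C}W[\pi_1,\pi_2](\delta).\]

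Substituting the values from Lemma \ref{dimension} proves each item of the corollary. For (i) with $\pi_1=\pi_2=\pi_{\chi_1,\chi_2}$ the sum is $(q-1)\cdot 1+2+0=q+1$; for (ii) with $\pi_1=\pi_2=\psi\!\cdot\!\St_G$ it is $(q-1)\cdot 1+1+0=q$; for (iii) with $\pi_1=\pi_2=\psi\!\cdot\!1_G$ it is $(q-1)\cdot 1+1+1=q+1$; for (iv) with $\pi_\theta$ cuspidal both the $\eta$- and $\delta$-terms vanish, giving $(q-1)\cdot 1+0+0=q-1$; and in the mixed cases (v) and (vi) the type-(i) term vanishes because $\psi\!\cdot\!\St_G\not\simeq\psi\!\cdot\!1_G$, leaving only the $\eta$-contribution $1$ from (d) or (e) of Lemma \ref{dimension}(2).

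The completeness assertion — that every other pair $\pi_1\otimes\pi_2\in\Irr_0(H)$ has $W[\pi_1,\pi_2]=0$ — is then immediate: each of the three summands vanishes unless $(\pi_1,\pi_2)$ matches one of the specific configurations enumerated in Lemma \ref{dimension}, and the union of those configurations is exactly the list (i)--(vi). The principal obstacle, such as it is, is not conceptual but combinatorial: recognizing that the type-(i) orbits form a $(q-1)$-parameter family (with constant stabilizer) is what produces the $q\pm 1$'s appearing in the statement; once that is in hand the entire corollary is a mechanical substitution into Lemma \ref{dimension}.
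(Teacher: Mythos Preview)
Your proof is correct and follows essentially the same route as the paper: both reduce the dimension count to the orbit sum $\dim_{\C}W[\pi_1,\pi_2]=\sum_{a\in F^{\times}}\dim_{\C}W[\pi_1,\pi_2](\xi_a)+\dim_{\C}W[\pi_1,\pi_2](\eta)+\dim_{\C}W[\pi_1,\pi_2](\delta)$ and then substitute the values from Lemma~\ref{dimension}. Your version is simply more explicit---spelling out why the $\xi_a$ lie in distinct orbits and carrying out each arithmetic substitution---whereas the paper states the orbit-sum formula and declares the result immediate.
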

\begin{proof}
Note that $\dim_{\C}W[ \pi_1, \pi_2]= \sum_{a\in F^{\times}} \dim_{\C} W[\pi_1, \pi_2](\xi_a) + \dim_{\C}W[\pi_1, \pi_2](\eta)+ \dim_{\C}W[\pi_1, \pi_2](\delta)$, so the corollary  results immediately from above Lemma \ref{dimension}.
\end{proof}
\subsection{} We have already calculated  the dimension of the vector space $W[\pi_1, \pi_2]$, and it suffices to prove  the main theorem in this section.
\begin{theorem}\label{mainth1}
For $(\pi, G\times G\times G, W)$, we have:
$$ \pi\simeq   \bigoplus_{\sigma\in \Irr(G)} \Bigg( \sigma\otimes \sigma\otimes \sigma  \Bigg) \oplus \bigoplus_{\psi\in \Irr(F^{\times})}\Bigg(
(\psi\!\cdot\!\St_G \!\otimes\!\psi\!\cdot\!1_G\!\otimes\!\psi\!\cdot\!1_G)
\,\oplus\, (\psi\!\cdot\!1_G \!\otimes\!\psi\!\cdot\!\St_G\!\otimes\!\psi\!\cdot\!1_G)
\,\oplus\, (\psi\!\cdot\!1_G \!\otimes\!\psi\!\cdot\!1_G\!\otimes\!\psi\!\cdot\!\St_G)\Bigg).$$
\end{theorem}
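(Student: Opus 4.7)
The plan is to combine three ingredients: (a) the decomposition
$W=\bigoplus_{\pi_1\otimes\pi_2\in\Irr_0(H)} W[\pi_1,\pi_2]\otimes\check V_1\otimes\check V_2$
established above, which identifies the multiplicity
$m(\sigma_1,\sigma_2,\sigma_3):=[\pi:\sigma_1\otimes\sigma_2\otimes\sigma_3]$
with $[W[\check\sigma_2,\check\sigma_3]:\sigma_1]$ as $G$-modules; (b) the list of nonzero $W[\pi_1,\pi_2]$ with their dimensions given by Corollary~\ref{corollary1}; and (c) the $S_3$-symmetry coming from the fact that $\pi$ is the restriction to $G\times G\times G$ of a representation of $\big(G\times G\times G\big)\rtimes S_3$ via the natural permutation action on $V^{\otimes 3}$, which by Clifford theory makes $m(\sigma_1,\sigma_2,\sigma_3)$ invariant under permutations of its arguments.

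A necessary condition for $m(\sigma_1,\sigma_2,\sigma_3)\neq 0$ is then that each of the three pairs $(\check\sigma_i,\check\sigma_j)$ lies in the (contragredient-stable) ``nonzero list'' of Corollary~\ref{corollary1}. Inspecting that list immediately confines the triple to two types: either all three entries coincide with a common $\sigma\in\Irr(G)$, or all three entries lie in $\{\psi\!\cdot\!1_G,\psi\!\cdot\!\St_G\}$ for a common character $\psi$ of $F^\times$. For the diagonal triples with $\sigma\in\{\pi_{\chi_1,\chi_2},\pi_\theta\}$ the selection forces $\sigma_1$ uniquely, and comparing $\dim W[\check\sigma,\check\sigma]$ with $\dim\sigma$ in items (i), (iv) of Corollary~\ref{corollary1} gives multiplicity~$1$.

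The remaining cases all involve only $\psi\!\cdot\!1_G$ and $\psi\!\cdot\!\St_G$, and here the ambiguity is resolved by the dimension-one spaces of items (v), (vi) of Corollary~\ref{corollary1} together with $S_3$-symmetry. First, from
$$m(\psi\!\cdot\!1_G,\psi\!\cdot\!\St_G,\psi\!\cdot\!\St_G)=[W[\psi^{-1}\St_G,\psi^{-1}\St_G]:\psi\!\cdot\!1_G]=[W[\psi^{-1}1_G,\psi^{-1}\St_G]:\psi\!\cdot\!\St_G]$$
the right-hand side vanishes, since $\dim W[\psi^{-1}1_G,\psi^{-1}\St_G]=1<q=\dim\psi\!\cdot\!\St_G$; a dimension count in $W[\psi^{-1}\St_G,\psi^{-1}\St_G]$ then forces $W[\psi^{-1}\St_G,\psi^{-1}\St_G]\cong\psi\!\cdot\!\St_G$ and hence $m(\psi\!\cdot\!\St_G,\psi\!\cdot\!\St_G,\psi\!\cdot\!\St_G)=1$. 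A completely analogous argument gives $W[\psi^{-1}\St_G,\psi^{-1}1_G]\cong W[\psi^{-1}1_G,\psi^{-1}\St_G]\cong\psi\!\cdot\!1_G$, and then the dimension $q+1$ of $W[\psi^{-1}1_G,\psi^{-1}1_G]$ together with $S_3$-symmetry yields $W[\psi^{-1}1_G,\psi^{-1}1_G]\cong\psi\!\cdot\!1_G\oplus\psi\!\cdot\!\St_G$. Assembling these contributions, summed over $\chi_1\neq\chi_2$, regular $\theta$ and all characters $\psi$, recovers exactly the decomposition in the statement.

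The main obstacle is the ambiguity inside $W[\psi^{-1}1_G,\psi^{-1}1_G]$ and $W[\psi^{-1}\St_G,\psi^{-1}\St_G]$: each of these could a priori decompose into several irreducible $G$-summands, and a naive dimension count is not enough to choose between them. The dimension-one ``mixed'' spaces of items (v), (vi) of Corollary~\ref{corollary1} are exactly what make the $S_3$-symmetry constraint tight and force a unique decomposition.
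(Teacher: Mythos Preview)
Your proposal is correct and follows essentially the same approach as the paper: both combine the $S_3$-symmetry from Clifford theory with the list and dimensions in Corollary~\ref{corollary1} to pin down every multiplicity. The paper's proof simply writes ``Comparing with the results in Corollary~\ref{corollary1} gives the theorem'' after stating the Clifford-theoretic trichotomy, whereas you have carefully spelled out that comparison --- in particular, the use of the one-dimensional spaces $W[\psi^{-1}1_G,\psi^{-1}\St_G]$ and $W[\psi^{-1}\St_G,\psi^{-1}1_G]$ together with $S_3$-symmetry to resolve the ambiguity in the $\{\psi\!\cdot\!1_G,\psi\!\cdot\!\St_G\}$ block, which is exactly the step the paper leaves implicit.
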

\begin{proof}[Proof]
Since $\pi= \Res_{G\times G\times G}^{(G\times G\times G)\rtimes S_3}\pi'$, by  Clifford theory, the representation $\pi$ is the direct sum of the following three kinds of  representations:
 \begin{itemize}
\item[(1)]  $\tau_0 \otimes \tau_0 \otimes \tau_0$ for $\tau_0\in \Irr(G)$;
\item[(2)]  $\tau_1 \otimes \tau_1 \otimes \tau_2 + \tau_1 \otimes \tau_2\otimes \tau_1 + \tau_2\otimes \tau_1 \otimes \tau_1$ for $\tau_1 \neq \tau_2 \in \Irr(G)$;
\item[(3)]$\tau_0' \otimes \tau_1 '\otimes \tau_2'+\tau_0'\otimes \tau_2'\otimes \tau_1'+\tau_1' \otimes \tau_0'\otimes \tau_2'+\tau_1' \otimes \tau_2'\otimes \tau_0'+\tau_2' \otimes \tau_0'\otimes \tau_1'+\tau_2' \otimes \tau_1'\otimes \tau_0'$ for three different representations $\tau_0',\tau_1',\tau_2'$ in $\Irr(G)$.
\end{itemize}
Comparing  with the results in Corollary \ref{corollary1}  gives the theorem.
\end{proof}
\section{The decomposition of the Weil representation of $\GL_2(F)\times \GL_2(E)$ }\label{demp2}
\subsection{} We first give some notations and formulate the representation concerned in this section.

In this section, we use the following notations: $G=\GL_2(F)$,  $ B=\{\begin{pmatrix}
  a& b\\
  0&d
\end{pmatrix} \in G \}
, \ N= \{
 \begin{pmatrix}
  1& b \\
  0&1
\end{pmatrix} \in G \}, \ T=\{\begin{pmatrix}
  a& 0 \\
  0&d
\end{pmatrix} \in G\},  \  Z=\{\begin{pmatrix}
  a& 0 \\
  0&a  \end{pmatrix} \in G \}$;
  $H=\GL_2(E)$,  $ B'=\{\begin{pmatrix}
  a& b\\
  0&d
\end{pmatrix} \in H \}
, \ N'= \{
 \begin{pmatrix}
  1& b \\
  0&1
\end{pmatrix} \in H \}, \ T'=\{\begin{pmatrix}
  a& 0 \\
  0&d
\end{pmatrix} \in H\},  \  Z'=\{\begin{pmatrix}
  a& 0 \\
  0&a  \end{pmatrix} \in H \}$; $h(r)=\begin{pmatrix}
  r& 0 \\
  0&r^{-1}
\end{pmatrix}$, $u(b)=\begin{pmatrix}
  1& b \\
  0&1
\end{pmatrix}$, $h'(t)=\begin{pmatrix}
  1& 0 \\
  0&t
\end{pmatrix}$, $\omega'=\begin{pmatrix}
  0& 1 \\
  1&0
\end{pmatrix}$, $\omega=\begin{pmatrix}
  0& 1 \\
  -1&0
\end{pmatrix}$  in $G$ or $H$; $\Gal(E/F)=\langle \sigma \rangle.$ \\
If $h \in H$ or $M_2(E)$, we will denote its conjugate by $h^{\sigma}$ or $\overline{h}$, its transpose by $h^t$, and let $h^{\star}:= \overline{h}^{t}$.\\

 Let $V$ be a vector space over $F$ of dimension 2, endowed with a symplectic form $\langle,\, \rangle$. Let $\{ e_1, e_2\}$ be a symplectic basis of $V$. Consider the $E$-vector space $V_E=E\otimes_F V$, endowed with the symplectic form $\langle, \rangle_{V_E}$ induced from $V$. Define a $\Gal(E/F)$-action on $V_E$  by
$$\Gal(E/F)\times E\otimes_F V \longrightarrow E\otimes_F V; (\sigma, \sum_i t_i \otimes e_i) \longmapsto \sum_i t_i^{\sigma}\otimes e_i.$$
Now let $\mathbb{W}= V_E\otimes_E V_E$ endowed with the symmetric form $\langle, \rangle_{\mathbb{W}}=\langle, \rangle_{V_E}\otimes \langle, \rangle_{V_E} $. On $\mathbb{W}$, we consider the twisted Galois action defined by
 $$\Gal(E/F)  \times \mathbb{W} \longrightarrow \mathbb{W}; (\sigma, w=\sum_i u_i\otimes v_i  ) \longmapsto {}^{\sigma}w=\sum_i  v_i^{\sigma} \otimes u_i^{\sigma}.$$
 We will let $\mathbb{W}_0$ denote the set $\{ w\in \mathbb{W}|{}^{\sigma}w=w\}$. It can be checked that the restriction of  $\langle, \rangle_{\mathbb{W}}$ to $\mathbb{W}_0$  defines an  $F$-symmetric form, denoted by $(, )_{\mathbb{W}_0}$.  Let $q$ be its associative quadratic form  given by
 $$(w_0, w'_0)_{\mathbb{W}_0}= q(w_0+w'_0)-q(w_0)-q(w_0'), \quad \quad  w_0, w_0' \in \mathbb{W}_0.$$
 By calculation, each $w_0\in \mathbb{W}_0$ may be expressed in the form
 $$w_0= x e_1\otimes e_1  + \alpha  e_1 \otimes e_2 + \overline{\alpha} e_2 \otimes e_1  + y e_2 \otimes e_2 \textrm{ for } x, y \in F, \alpha \in E.$$
   Every element $w_0$ corresponds to a matrix $\begin{pmatrix}
  x& \alpha  \\
  \overline{\alpha} &y \end{pmatrix}$. So we can  identify $\mathbb{W}_0$ with $M=\{ \begin{pmatrix}
  x& \alpha  \\
  \overline{\alpha} &y \end{pmatrix}| x, y \in F, \alpha \in E\}$. The symmetric form $q$ is transferred as $q(m)=\det(m)$ for $m\in M$.\\

Let $\GO(\mathbb{W})$ denote the  group of symmetric similitudes  of  $(\mathbb{W}, \langle, \rangle_{\mathbb{W}})$. By the definition of $\mathbb{W}$, there exists a morphism of groups: $ \GL(V_E) \times \GL(V_E)  \longrightarrow \GO(\mathbb{W})$. We define a twisted Galois action of $\Gal(E/F)$ on $ \GL(V_E) \times \GL(V_E)$  by
$$\Gal(E/F) \times \bigg(\GL(V_E) \times \GL(V_E)\bigg) \longrightarrow \GL(V_E) \times \GL(V_E) ; h=(g_1, g_2) \longmapsto {}^{\sigma} h:=( g_2^{\sigma}, g_1^{\sigma}).$$
Write $\overline{\GL(V_E)}=\{ h \in \GL(V_E) \times \GL(V_E) | {}^{\sigma}h=h\}.$
 Then there exists an isomorphism of groups $\GL(V_E) \longrightarrow \overline{\GL(V_E)}; g \longmapsto (g, g^{\sigma}).$
  If given $h\in \GL(V_E) \times \GL(V_E),  w\in \mathbb{W}= V_E\otimes_E V_E,$ one can verify that
   ${}^{\sigma}h\cdot {}^{\sigma}w={}^{\sigma}(h\cdot w)$.
  So it induces a morphism from $\GL(V_E)\simeq \overline{\GL(V_E)}$ to $\GO(M)$. By the fixed basis $\{ e_1,e_2\}$,  we obtain a morphism $i: H=\GL_2(E) \longrightarrow \GO(M)$.
\begin{lemma}
 The morphism $i: H=\GL_2(E) \longrightarrow \GO(M)$ is defined by  $H \times M \longrightarrow M; (h, m) \longmapsto h m \overline{h}^t,$ where $\overline{h}^t$ is the transpose conjugate of $h$.
\end{lemma}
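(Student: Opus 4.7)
The plan is to chase definitions. I would first make the natural action of $\GL(V_E) \times \GL(V_E)$ on $\mathbb{W} = V_E \otimes_E V_E$ completely explicit in the fixed basis $\{e_i \otimes e_j\}_{1\le i,j\le 2}$, then restrict to the embedded subgroup $\overline{\GL(V_E)}$ via $g \mapsto (g, g^{\sigma})$, and finally translate the resulting action on $\mathbb{W}_0$ back to a matrix action on $M$ through the identification $w_0 = x e_1\otimes e_1 + \alpha e_1\otimes e_2 + \overline{\alpha}e_2\otimes e_1 + y e_2\otimes e_2 \longleftrightarrow m = \begin{pmatrix} x & \alpha \\ \overline{\alpha} & y\end{pmatrix}$. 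That the subgroup really does act on $\mathbb{W}_0$ is already guaranteed by the Galois-equivariance identity ${}^{\sigma}h \cdot {}^{\sigma}w = {}^{\sigma}(h\cdot w)$ quoted in the paragraph preceding the statement, applied to $h = (g,g^\sigma)$ (which is fixed by the twisted Galois action) and $w \in \mathbb{W}_0$.

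The core computation is a one-line matrix calculation. Fix the convention $g\cdot e_j = \sum_i g_{ij} e_i$ so that $g$ acts on column vectors in the usual way. The natural $\GL(V_E)\times\GL(V_E)$-action is $(g_1,g_2)\cdot(u\otimes v)=g_1u\otimes g_2 v$. Applied to a basis element with $g_1 = g$ and $g_2 = g^{\sigma}=\overline{g}$, this reads
\begin{equation*}
(g, g^{\sigma})\cdot(e_i\otimes e_j) \;=\; \sum_{k,l} g_{ki}\,\overline{g}_{lj}\,(e_k\otimes e_l).
\end{equation*}
Summing against the coefficients of $w_0$ gives
\begin{equation*}
(g, g^{\sigma})\cdot w_0 \;=\; \sum_{k,l}\Bigl(\sum_{i,j} g_{ki}\,m_{ij}\,\overline{g}_{lj}\Bigr) e_k\otimes e_l \;=\; \sum_{k,l} (g\,m\,\overline{g}^{\,t})_{kl}\,e_k\otimes e_l,
\end{equation*}
so under the identification $\mathbb{W}_0\simeq M$ the action of $h=g$ sends $m$ to $h\,m\,\overline{h}^{\,t}$, as claimed.

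To close the argument I would record the two sanity checks that are required for the formula to be well-posed. First, $h m\overline{h}^{\,t}$ must still lie in $M$: this is the identity $\bigl(h m \overline{h}^{\,t}\bigr)^{\star} = h\,\overline{m}^{\,t}\,\overline{h}^{\,t} = h m\overline{h}^{\,t}$, which uses only $m^{\star}=m$. Second, $h$ must act as a symmetric similitude: $\det(h m\overline{h}^{\,t}) = \nnn_{E/F}(\det h)\cdot\det m$, so the similitude factor is $\nnn_{E/F}(\det h)\in F^{\times}$. I do not expect any serious obstacle; the argument is entirely a bookkeeping exercise. The one point that requires attention is that the convention for how $g\in\GL_2(E)$ acts on the basis $\{e_1,e_2\}$ must match the convention used to encode $w_0$ as the matrix $m$; a different convention would produce an isomorphic but cosmetically different formula such as $\overline{h}^{\,t} m h$, so consistency here is what delivers exactly $h m \overline{h}^{\,t}$.
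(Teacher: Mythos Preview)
Your proof is correct and follows essentially the same approach as the paper: both chase the definition of the $\GL(V_E)\times\GL(V_E)$-action on $\mathbb{W}$ in the basis $\{e_i\otimes e_j\}$ using the column-vector convention $g\cdot e_j=\sum_i g_{ij}e_i$, and read off the matrix formula $m\mapsto h m\overline{h}^{\,t}$. The paper differs only cosmetically, splitting the computation into the two factors $(h,1)$ and $(1,\overline{h})$ rather than handling $(g,g^\sigma)$ in one index calculation as you do.
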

 \begin{proof}
 Let  $h=\begin{pmatrix}
  a& b \\
  c&d
\end{pmatrix} \in H$. By definition,
$h\cdot (e_1, e_2):= (e_1,e_2) \begin{pmatrix}
  a& b \\
  c&d
\end{pmatrix}= (ae_1 + ce_2; be_1+ de_2).$
So
$(h, 1)\cdot(\alpha e_1\otimes e_1 + \beta e_1\otimes e_2 +\gamma e_2\otimes e_1 + \delta e_2 \otimes e_2)$
$= (a\alpha + b \gamma )e_1\otimes e_1 + (a\beta + b\delta) e_1\otimes e_2 + (c\alpha + d\gamma ) e_2 \otimes e_1 + (c\beta + d\delta) e_2 \otimes e_2.$
 Forgetting the basis we obtain
 $ (h, 1)\cdot\begin{pmatrix}
  \alpha& \beta \\
  \gamma&\delta
\end{pmatrix} = \begin{pmatrix}
  a& b \\
  c&d
\end{pmatrix}\begin{pmatrix}
  \alpha& \beta \\
  \gamma&\delta
\end{pmatrix}.$
 Similarly,
 $(1 , \overline{h}) \cdot(\alpha e_1 \otimes e_1 + \beta e_1 \otimes e_2 + \gamma e_2 \otimes e_1 + \delta e_2 \otimes e_2)$
  $=(\alpha \overline{a} + \beta \overline{b} ) e_1 \otimes e_1 + (\beta \overline{d} + \alpha \overline{c}) e_1 \otimes e_2 + (\gamma \overline{a} + \delta \overline{b} ) e_2 \otimes e_1 + (\delta \overline{d} + \gamma \overline{c}) e_2 \otimes e_2,$
  i.e.
  $(1 ,\overline{h})\cdot \begin{pmatrix}
  \alpha& \beta \\
  \gamma &\delta
\end{pmatrix} = \begin{pmatrix}
  \alpha \overline{a}+ \beta\overline{b} & \beta \overline{d} + \alpha \overline{c} \\
  \gamma \overline{a} + \delta \overline{b}  &\delta \overline{d} + \gamma \overline{c}
\end{pmatrix}=  \begin{pmatrix}
  \alpha& \beta \\
  \gamma &\delta
\end{pmatrix}  \begin{pmatrix}
  \overline{a}& \overline{c} \\
  \overline{b} &\overline{d}
\end{pmatrix}  = \begin{pmatrix}
  \alpha& \beta \\
  \gamma &\delta
\end{pmatrix} \overline{h}^t.$
\end{proof}

Now  we consider the symplectic vector space $V\otimes M$ over $F$ of dimension $8$. By the above discussion, there is a map from $G\times H$ to  $\GSp(V\otimes_F M) \simeq \GSp_8(F)$. Similarly as in Section \ref{demp1}, we consider the restriction of the  Weil representation $(\rho, \GSp_8(F), W)$ to the group $G\times H$,  denoted  by $(\pi, G\times H, W)$.\\

\begin{proposition}
The Weil representation $(\pi, G\times H, W)$ can be realized in the space $W=\C[M\times X_F]$, and the action of $G\times H$ on $W$ is given by the following formulas:
\begin{equation}\label{formula1}
 \big(\pi([h(a), 1])F\big)(m, \psi)= F(am, \psi ),
\end{equation}
\begin{equation}\label{formula2}
 \big(\pi([u(b), 1])F\big)(m, \psi)= \psi(b\det(m))F(m,\psi),
\end{equation}
\begin{equation}\label{formula3}
 \big(\pi([\omega, 1])F\big)(m,\psi)=-q^{-2} \sum_{n\in M} F(n, \psi)\psi\big(B( m, n)\big),
\end{equation}
\begin{equation}\label{formula4}
 \big(\pi([h'(t), 1])F\big)(m,\psi)= F(m, \psi^{t^{-1}}),
\end{equation}
\begin{equation}\label{formula5}
 \big(\pi([1,h])F\big)(m,\psi)= F(h^{-1}mh^{\star -1}\, \nnn_{E/F}(\det(h)), \psi^{\nnn_{E/F}(\det(h)^{-1})}),
\end{equation}
where $h(a), u(b), h'(t)\in G$; $h\in H, m\in M, \psi \in X_F$; $B(m,n):=q(m+n)-q(m)-q(n)$ for $m,n\in M$.
\end{proposition}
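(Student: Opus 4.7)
The plan is to realize the Weil representation $(\pi, W)$ in the Schr\"odinger model attached to a complete polarization of $V \otimes_F M$ and then compute the action of each of the listed generators by pushing it through the general formulas (\ref{weilGSp1})--(\ref{weilGSp4}) of Section \ref{Prel}. Set $V \otimes M = V_+ \oplus V_-$ with $V_\pm = Fe_\pm \otimes M$, and identify $V_-$ with $M$ via $e_2 \otimes m \mapsto m$; the symplectic form $\langle,\rangle_V \otimes (,)_{\mathbb{W}_0}$ then pairs $V_+$ with $V_-$ through the symmetric form $(,)_{\mathbb{W}_0}$ on $M$, so the representation is realized on $\C[M \times X_F]$.

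For $(g_1, 1)$ with $g_1 \in G$, the image in $\GSp(V \otimes M)$ is $g_1 \otimes \id_M$. Writing $g_1$ in each of the forms $h(a), u(b), h'(t), \omega$ and tensoring with $\id_M$ produces an explicit block-matrix expression with respect to $V_+ \oplus V_-$; feeding these into (\ref{weilGSp1})--(\ref{weilGSp4}) yields (\ref{formula1})--(\ref{formula4}) in close parallel with the proof of Proposition \ref{Mode2}. The one step requiring a small additional calculation is the constant in (\ref{formula3}): for $n = 4$ the Gauss-sum prefactor simplifies as $\gamma(\psi^{-1/2})^{-4} = q^{-2}$, and the minus sign emerges when one compares $\langle z, \omega^{-1}(y)\rangle$ on $V \otimes M$ with $B(m,n)$ on $M$, via the discriminant of $(,)_{\mathbb{W}_0}$ which is a nonsquare in $F^\times$ because the extension $E/F$ is nontrivial.

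For $(1, h)$ with $h \in H$, the image in $\GSp(V \otimes M)$ is $\id_V \otimes \alpha$, where $\alpha : m \mapsto hm\overline{h}^t$ is an orthogonal similitude of $(M, (,)_{\mathbb{W}_0})$ of multiplier $\mu := \nnn_{E/F}(\det h)$ (since $\det(hm\overline{h}^t) = \mu \det m$). With respect to $V_+ \oplus V_-$ this is block-diagonal $\diag(\alpha, \alpha)$; to match the generators of $\GSp(V \otimes M)$ I factor it as $h(\alpha) \cdot h'(\mu)$, where $h(\alpha) = \diag(\alpha, \alpha^\vee) \in \Sp$ and $h'(\mu) = \diag(\id_M, \mu\,\id_M)$. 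The $B$-adjoint of $\alpha$ is $\alpha^* = \mu \alpha^{-1}$, so the symplectic contragredient on $V_-$ is $\alpha^\vee = (\alpha^*)^{-1} = \mu^{-1}\alpha$, and indeed $\mu\,\alpha^\vee = \alpha$ as required. Combining (\ref{weilGSp1}) and (\ref{weilGSp4}) then gives
\begin{equation*}
\bigl(\pi([1,h])F\bigr)(m,\psi) \;=\; \chi_q^+(\det\alpha)\, F\bigl(\alpha^{\vee -1}m,\ \psi^{\mu^{-1}}\bigr) \;=\; \chi_q^+(\det\alpha)\, F\bigl(\mu\,h^{-1}m\,h^{\star -1},\ \psi^{\mu^{-1}}\bigr),
\end{equation*}
so (\ref{formula5}) will follow once we verify $\chi_q^+(\det\alpha) = 1$.

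The main obstacle is exactly this last step. Since $\alpha \in \GO(M)$ has multiplier $\mu$ on a $4$-dimensional space, $(\det\alpha)^2 = \mu^4$, so $\det\alpha = \pm\mu^2$; because $h \mapsto \alpha$ is a morphism of connected algebraic groups into $\GO(M)$, its image lies in the identity component, forcing $\det\alpha = +\mu^2$, a square in $F^\times$. Hence $\chi_q^+(\det\alpha) = 1$, and formula (\ref{formula5}) drops out.
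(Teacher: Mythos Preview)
Your approach is essentially the paper's: realize $\pi$ in the Schr\"odinger model for the polarization $V\otimes M=(Fe_1\otimes M)\oplus(Fe_2\otimes M)$ and push each generator through formulas (\ref{weilGSp1})--(\ref{weilGSp4}). The only differences are in presentation: for (\ref{formula3}) the paper fixes an explicit $F$-basis $\{f_1,\dots,f_4\}$ of $M$, writes $\omega\otimes 1=\bar\omega\cdot h(A)$ in the resulting symplectic basis, and reads off the sign as $\chi_q^+(\det A)=\chi_q^+(4\xi^2)=-1$, whereas you phrase this as a discriminant statement; and for (\ref{formula5}) you justify $\chi_q^+(\det\alpha)=1$ by connectedness of $\Res_{E/F}\GL_2$, a point the paper leaves implicit.
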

\begin{proof}[Proof]
(\ref{formula1}), (\ref{formula2}) and (\ref{formula4}) follow directly from (\ref{weilGSp1})---(\ref{weilGSp4}) in Section \ref{Prel}.\\
 For (\ref{formula5}): $$ \Big(\pi([1,h])F\Big)(m, \psi)=\rho\Bigg( \begin{pmatrix}
  h& 0 \\
  0&h
\end{pmatrix}\Bigg) F(m, \psi)
= \rho \Bigg( \begin{pmatrix}
  1& 0 \\
  0&\nnn_{E/F}(\det(h))
\end{pmatrix}\Bigg) \rho\Bigg( \begin{pmatrix}
  h& 0 \\
  0&h
\end{pmatrix} \cdot  \begin{pmatrix}
 1& 0 \\
  0& \nnn_{E/F}(\det(h))^{-1}
\end{pmatrix} \Bigg)
 F(m, \psi)$$
$$= F \Big( h^{-1}\cdot \nnn_{E/F}(\det(h))m, \psi^{\nnn_{E/F}(\det(h))^{-1}}\Big)= F\Big( h^{-1} mh^{\star -1}\nnn_{E/F}(\det(h)), \psi^{\nnn_{E/F}(\det(h)^{-1})}\Big).$$
For (\ref{formula3}): Let $E=F(\xi)$ with $\tr(\xi)=0$. Take $f_1= \begin{pmatrix}
 1& 0 \\
  0& \frac{1}{2} \end{pmatrix}, f_2=\begin{pmatrix}
  1& 0 \\
  0& -\frac{1}{2} \end{pmatrix}$, $f_3=\begin{pmatrix}
  0& 1 \\
  1& 0 \end{pmatrix}$, $f_4=\begin{pmatrix}
  0& \xi \\
  \overline{\xi}& 0 \end{pmatrix}$.
Suppose $\nnn_{E/F}(\xi)= \overline{\xi} \xi= -\xi^2=a$.  Then the set $\{ e_1\otimes f_1, e_1\otimes f_2, e_1 \otimes f_3, e_1 \otimes f_4; e_2\otimes f_1, -e_2\otimes f_2, -\frac{1}{2}e_2\otimes f_3, -\frac{1}{2} a^{-1} e_2\otimes f_4\}$ is a symplectic basis of $V\otimes_F M$. By such basis, the image of $\begin{pmatrix}
  0& 1 \\
  -1& 0 \end{pmatrix} \otimes 1$ in $\GSp(V\otimes_F M)$,  under the map $\GL(V) \times \GO(M) \longrightarrow \GSp(V\otimes_FM)$,  is $\overline{\omega} \begin{pmatrix}
  A& 0 \\
  0& A^{-1} \end{pmatrix}$, where  $\overline{\omega}=\begin{pmatrix}
  0 & I\\
  -I & 0 \end{pmatrix}$ and $A=\begin{pmatrix}
  1& 0 &  0& 0\\
  0 & -1 & 0 & 0\\
  0 & 0 & -2 & 0\\
  0&  0 & 0 &-2a \end{pmatrix}$.    Applying the formulas (\ref{weilGSp1}), (\ref{weilGSp3}) in Section \ref{Prel}, we get
  $$\rho( \overline{\omega} \begin{pmatrix}
  A& 0 \\
  0& A^{-1} \end{pmatrix}) F (e_2 \otimes m, \psi)
  = q^{-2} \sum_{n\in M} \Bigg( \rho\begin{pmatrix}
  A& 0 \\
  0& A^{-1} \end{pmatrix}F\Bigg) (e_2 \otimes n, \psi) \psi(\langle e_2\otimes n, \overline{\omega}^{-1}(e_2 \otimes m)\rangle)$$
  $$= q^{-2} \chi_q^+(2^2\xi^2) \sum_{n\in M} F \Big( \begin{pmatrix}
 A^{-1}& 0 \\
  0&A  \end{pmatrix} \cdot e_2\otimes n, \psi\Big) \psi (\langle e_2 \otimes n, \overline{\omega}^{-1}(e_2 \otimes m)\rangle)
  = -q^{-2} \sum_{n\in M} F(e_2 \otimes n, \psi) \psi( \langle \begin{pmatrix}
 A& 0 \\
  0&A^{-1}  \end{pmatrix} e_2 \otimes n, \overline{\omega}^{-1}(e_2\otimes m)\rangle)$$
  $$=-q^{-2} \sum_{n\in M} F(e_2\otimes n, \psi) \psi(\langle e_2\otimes n, \big( \overline{\omega} \begin{pmatrix}
 A& 0 \\
  0&A^{-1}  \end{pmatrix} \big)^{-1} (e_2\otimes m)\rangle)
  = -q^{-2} \sum_{n\in M} F(e_2\otimes n, \psi) \psi(\langle e_2\otimes n, \begin{pmatrix}
 0& 1 \\
  -1&0  \end{pmatrix}^{-1}(e_2) \otimes m\rangle )$$
  $$=-q^{-2} \sum_{n\in M} F(e_2 \otimes n, \psi) \psi(\langle e_2 \otimes n, -e_1\otimes m\rangle)
  =-q^{-2} \sum_{n\in M} F( e_2\otimes n, \psi) \psi(B(m,n)).$$
  Forgetting the basis, we get the formula (\ref{formula3}).
  \end{proof}
  \begin{remark}
  The above representation $\pi$ of $G \times H$ is compatible with $\rho_{Q}$ constructed by Andrade in \cite[p.35, Theorem 5]{Andr} (There the $\rho_Q$ is defined more directly).
  \end{remark}
\subsection{} To decompose the representation $\pi$ of $G \times H$, let us  calculate the dimension of the vector space $W[\pi_1]$( see below  for its definition).

Let $U=\{ x\in E^{\times}| N_{E/F}(x)=1\}$.  We regard $U$ as a subgroup of $H$. Let $\Irr_0(H)$ be the set of the isomorphism  classes of the irreducible representations $ \pi_1$ of $H$, such that $\pi_1$ is  trivial over $U$. For each representation $(\pi_1, V_1) \in \Irr_0(H)$, we associate a  representation $(\pi_0, W[\pi_1])$ of $G$, where the vector space $W[\pi_1]$ consists of   functions: $ f: M \times X_F \longrightarrow V_1$ such that
\begin{equation}\label{star2}
 f\big(hmh^{\star} \nnn_{E/F}(\det(h)^{-1}), \psi^{\nnn_{E/F}(\det(h))}\big)= \pi_1(h) \circ f(m, \psi)
\end{equation}
for $h\in H, (m, \psi) \in M\times X_F$ and the action of $G$ on $W[\pi_1]$ is given by the formulas (\ref{formula1})---(\ref{formula4}).
\begin{proposition}
For the representation $(\pi, G\times H, W)$, we have the following decomposition:\\
\begin{displaymath}
\pi\simeq \bigoplus_{(\pi_1, V_1)\in \Irr_0(H)} W[\pi_1] \otimes \check{V}_1.
\end{displaymath}
\end{proposition}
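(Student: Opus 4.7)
The plan is to mimic the strategy used for the analogous proposition in Section \ref{demp1}. Since $(\pi, G \times H, W)$ is the restriction of the Weil representation $\rho$ of $\GSp_8(F)$, it is semisimple as a $G \times H$-module, hence also semisimple when restricted to $H$. Consequently the standard $H$-isotypic decomposition gives
\begin{equation*}
W \;\simeq\; \bigoplus_{\pi_1\in\Irr(H)} W_{\check{\pi}_1}\otimes\check{V}_1,
\end{equation*}
where $W_{\check{\pi}_1}\simeq[W\otimes V_1]^H$ is the multiplicity space of $\check\pi_1$ in $W|_H$, equipped with the residual $G$-action inherited from $W$.

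The first main step is to identify $[W\otimes V_1]^H$ with $W[\pi_1]$ as $G$-modules. An element of $W\otimes V_1=\C[M\times X_F]\otimes V_1$ is simply a function $f\colon M\times X_F\to V_1$, and writing out the diagonal $H$-action via formula (\ref{formula5}) together with $\pi_1$, the condition of $H$-invariance becomes
\begin{equation*}
f(m,\psi) \;=\; \pi_1(h)\,f\!\left(h^{-1}mh^{\star -1}\nnn_{E/F}(\det h),\,\psi^{\nnn_{E/F}(\det h)^{-1}}\right) \quad \text{for all } h \in H.
\end{equation*}
Substituting $(m,\psi)\mapsto(hmh^{\star}\nnn_{E/F}(\det h)^{-1},\psi^{\nnn_{E/F}(\det h)})$ then yields precisely the equivariance (\ref{star2}) defining $W[\pi_1]$; the $G$-action given by (\ref{formula1})--(\ref{formula4}) manifestly matches on the two sides.

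The second step is to restrict the sum to $\Irr_0(H)$. Embedding $U\hookrightarrow H$ as scalar matrices $u\mapsto uI$, I would compute directly from (\ref{formula5}): for $u\in U$, set $h=uI$, so $h^{\star}=\overline{u}I$, $\det h=u^2$, and $\nnn_{E/F}(\det h)=(u\overline{u})^2=1$; hence
\begin{equation*}
\pi([1,uI])\,F(m,\psi)\;=\;F(u^{-1}\overline{u}^{-1}m,\,\psi)\;=\;F((u\overline{u})^{-1}m,\,\psi)\;=\;F(m,\psi).
\end{equation*}
Thus $U$ acts trivially on $W$, and consequently $W[\pi_1]=0$ whenever $\pi_1|_U$ is nontrivial, so the sum collapses to $\pi_1\in\Irr_0(H)$.

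There is no serious obstacle here: semisimplicity is immediate from the construction of $\rho$, the identification $W_{\check{\pi}_1}\simeq W[\pi_1]$ is a routine unraveling of the definitions, and the triviality of $U$ on $W$ is a one-line computation. The only point requiring bookkeeping care is the consistent distinction between $h$ and $h^{-1}$ in the change of variables used to match (\ref{star2}).
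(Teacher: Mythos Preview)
Your proposal is correct and follows essentially the same approach as the paper: the paper's proof simply records the isotypic decomposition $\pi = \bigoplus W_{\check{\pi}_1}\otimes\check{V}_1$ and the identification $W_{\check{\pi}_1}\simeq (W\otimes V_1)^H\simeq W[\pi_1]$, leaving the restriction to $\Irr_0(H)$ implicit. Your explicit verification that $U$ acts trivially on $W$ via formula (\ref{formula5}) is a welcome addition that the paper omits.
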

\begin{proof}[Proof]
The representation $W$ has the decomposition $\pi = \oplus_{(\pi_1, V_1)} W_{\check{\pi}_1} \otimes \check{V}_1$ and then  $W_{\check{\pi}_1} \simeq (W\otimes V_1)^H \simeq W[\pi_1]$.
The action of $G$ on $W[\pi_1]$ arises from the definition of $\pi$ and  above isomorphisms.
\end{proof}
For $(\pi_1, V_1)\in \Irr_0(H),$  we define $W[\pi_1](\xi )= \{f(\xi)| f \in W[\pi_1]\}$ and an $H$-action on the set $M \times X_F$ as follows:
\begin{equation}
h\cdot (m, \psi):= \big(hmh^{\star}\nnn_{E/F}(\det(h))^{-1}, \psi^{\nnn_{E/F}(\det(h))}\big)
\qquad\qquad  h \in H, \psi\in X_F, m\in M.
\end{equation}
It is observed that $W[\pi_1](\xi)
=V_1^{\stab_H(\xi)}$  for any  $\xi \in M\times X_F$.
 \begin{proposition}\label{theobitssection2}
Consider the action of $H$ on $M\times X_F$.\\
(1) The distinct orbit of this action can be described as follows:
  \begin{itemize}
   \item[(i)]  $\Orbit \{ \xi_a\}$, where $\xi_a= \big(\begin{pmatrix}
  1& 0 \\
  0&1
\end{pmatrix}, \phi^a) \quad $ for any $a\in F^{\times}$;
   \item[(ii)]  $\Orbit \{\eta \}$, where $\eta=(\begin{pmatrix}
  1& 0\\
  0&0
\end{pmatrix}, \phi\big)$;
\item[(iii)]  $\Orbit \{ \delta \}$, where $ \delta= \big(\begin{pmatrix}
  0& 0 \\
  0&0
\end{pmatrix}, \phi\big)$.
   \end{itemize}
(2) The corresponding stabilizer of the canonical element in each orbit is presented as following:
\begin{itemize}
   \item[(i)]  $ \Stab_H(\xi_a)= U_2(E)$, where $U_2(E)=\{h\in H| hh^{\star}=1\}$; \\
   \item[(ii)]  $\Stab_H(\eta)=H_1$, where $H_1=\{ h=\begin{pmatrix}
  u& b\\
  0&v
\end{pmatrix}| u,v \in U, b\in E \}$;
\item[(iii)]   $\Stab_H(\delta)=H_2$, where $H_2=\{ h\in H| \det (h)\in U \}$.
   \end{itemize}
\end{proposition}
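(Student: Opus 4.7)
The plan is to exploit two natural invariants of the $H$-action on $M \times X_F$: the rank of $m \in M$, and, after writing $\psi = \phi^t$ with $t \in F^{\times}$, the scalar $t\det(m) \in F$. For any $h \in H$, taking determinants gives $\det(hmh^{\star}) = \nnn_{E/F}(\det h)\,\det(m)$, so the matrix component of $h\cdot(m,\psi)$ has determinant $\nnn_{E/F}(\det h)^{-1}\det(m)$, while the character component is $\phi^{t\,\nnn_{E/F}(\det h)}$; hence the product $t\det(m)$ is $H$-invariant. Combined with the obvious $H$-invariance of $\rank(m)$, this shows already that the three prescribed orbit representatives are pairwise in distinct orbits, and that $\xi_a,\xi_{a'}$ lie in different orbits whenever $a\neq a'$ in $F^{\times}$.

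For transitivity I would combine two standard facts. First, since $E/F$ is a quadratic extension of finite fields, every non-degenerate Hermitian form on $E^2$ is equivalent to the standard one, and every rank-one Hermitian matrix has the shape $v v^{\star}$ with $v \in E^2\setminus\{0\}$; since $\GL_2(E)$ acts transitively on non-zero vectors, all rank-one Hermitian matrices form a single orbit under $m\mapsto hmh^{\star}$. Second, the norm map $\nnn_{E/F}\colon E^{\times}\to F^{\times}$ is surjective. Using these, for $(m,\phi^t)$ with $d=\det(m)\neq 0$ I choose $h_0\in H$ with $h_0 m h_0^{\star} = \begin{pmatrix}1&0\\0&1\end{pmatrix}$, landing at $(d\cdot\begin{pmatrix}1&0\\0&1\end{pmatrix},\phi^{t/d})$, then apply a scalar $h=\lambda\cdot\begin{pmatrix}1&0\\0&1\end{pmatrix}$ with $\nnn_{E/F}(\lambda)=d$ to reach $\xi_{td}$; surjectivity of $\nnn_{E/F}$ ensures every $a\in F^{\times}$ is realized. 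The rank-one case is first brought to $E_{11}:=\begin{pmatrix}1&0\\0&0\end{pmatrix}$ by the Hermitian equivalence, and the $\psi$-part is adjusted freely by $h=\diag(\mu,1)$ whose $\nnn_{E/F}(\det h)=\nnn_{E/F}(\mu)$ ranges over all of $F^{\times}$; the case $m=0$ requires only this last step.

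The stabilizer computations in (2) reduce to short matrix calculations. At $\xi_a$ the conditions $hh^{\star}=\nnn_{E/F}(\det h)\begin{pmatrix}1&0\\0&1\end{pmatrix}$ and $\nnn_{E/F}(\det h)=1$ collapse into the first alone, since taking its determinant already forces $\nnn_{E/F}(\det h)=1$; this yields $U_2(E)$. At $\eta$, writing $h=\begin{pmatrix}a&b\\c&d\end{pmatrix}$ and computing $h E_{11} h^{\star}=\begin{pmatrix}\nnn_{E/F}(a)&a\overline{c}\\\overline{a}c&\nnn_{E/F}(c)\end{pmatrix}$, the combined constraints $hE_{11}h^{\star}=E_{11}$ and $\nnn_{E/F}(\det h)=1$ force $c=0$ (from the $(2,2)$-entry), $a\in U$ (from the $(1,1)$-entry), and $d\in U$ (from $\nnn_{E/F}(ad)=1$), giving precisely $H_1$. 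At $\delta$ only the scalar constraint $\nnn_{E/F}(\det h)=1$ survives, giving $H_2$.

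The principal technical ingredient is the classification of Hermitian forms over $E/F$, which rests on the vanishing of the discriminant obstruction $F^{\times}/\nnn_{E/F}(E^{\times})$ for a quadratic extension of finite fields; everything else is bookkeeping with the surjective norm map.
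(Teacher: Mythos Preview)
Your argument is correct and follows essentially the same route as the paper: both rest on the classification of Hermitian forms over a quadratic extension of finite fields together with the surjectivity of $\nnn_{E/F}$, and the stabilizer computations are identical. The paper's only extra device is to first pass, via the involution $\alpha(h)=h/\det(h)$, to the action $h\odot(m,\psi)=(hmh^{\star},\psi^{\nnn_{E/F}(\det h)^{-1}})$, thereby treating the $M$-orbits and the $X_F$-orbits in two separate steps; your invariant $t\det(m)$ accomplishes the same decoupling directly.
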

\begin{proof}
We transfer the $H$-action $\cdot$ to another $H$-action $\odot $, where $\odot$ is defined by $h\odot (m,\psi):=\big(hmh^{\star}, \psi^{\nnn_{E/F}(\det(h)^{-1})}\big)$.  Since  $\alpha: H \longrightarrow H; h\longmapsto h/{\det(h)}$ is a group isomorphism and $\alpha(h)\odot(m,\psi)=h\cdot (m,\psi)$, it reduces to consider the action $\odot$.\\
1) Every element $m\in M$ corresponds to a hermitian form on a $2$-dimension $E$-vector space $V$. By the property of  hermitian form over finite fields and the surjection of the morphism $\nnn_{E/F}: E^{\times} \longrightarrow F^{\times}$, one can find $h\in H$ such that $hmh^{\star}=\diag(a,b)$ where $(a,b)=(1,1)$, $(1,0)$ or $(0,0)$. Let $H_{a,b}= \Stab_{H}(\diag (a,b))$. By calculation, we know:  (a) $H_{1,1}= \{ h \in H| h h^{\star}=1\};$ (b) $H_{1, 0}=\{ \begin{pmatrix}
  u& b\\
  0&v
\end{pmatrix} \in H | u\in U\}; $ (c)  $H_{0,0}= H.$    Consequently we define an action of $H_{a, b}$ on $X_F$ by
$ (h, \psi) \longrightarrow \psi^{\nnn_{E/F}(\det(h))} $ for $h \in H_{a,b}$ and $\psi \in X_F$.
As above, the results follow  by determining the orbits.\\
2) It is straightforward.
\end{proof}
\begin{lemma}\label{thegroupunitary}
\begin{itemize}
   \item[(i)]  $U_2(E)=\{\begin{pmatrix}
 u& 0\\
   0&1\end{pmatrix}\begin{pmatrix}
 a& b\\
   -b^q&a^q\end{pmatrix}| u\in U, a,b \in E, \nnn_{E/F}(a) + \nnn_{E/F}(b)=1\}$;
   \item[(ii)] $|U_2(E)|=(q-1)q(q+1)^2$;
\item[(iii)]   If we choose an element $e_{-1}\in E^{\times}$, such that $e_{-1}^{q+1}=-1$, then the element
$\begin{pmatrix}
 0& 1\\
   -1&e_{-1}^q\end{pmatrix} \notin B' U_2(E)$;
\item[(iv)] $H=B'U_2(E) \cup B' \begin{pmatrix}
 0& 1\\
   -1&e_{-1}^q\end{pmatrix} U_2(E)$.
   \end{itemize}
\end{lemma}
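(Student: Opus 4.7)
For part (i), the plan is to exploit the defining relation $hh^{\star}=1$ row by row. Writing $h=\begin{pmatrix}\alpha&\beta\\ \gamma&\delta\end{pmatrix}$, the equation $hh^{\star}=1$ yields $N(\alpha)+N(\beta)=1$, $N(\gamma)+N(\delta)=1$, and the orthogonality $\alpha\gamma^{q}+\beta\delta^{q}=0$. A short manipulation of these three relations forces $N(\alpha)=N(\delta)$ and $N(\beta)=N(\gamma)$. I would then set $a:=\delta^{q}$ and $b:=-\gamma^{q}$, so that $N(a)+N(b)=1$ and the second row of $\begin{pmatrix}a&b\\-b^{q}&a^{q}\end{pmatrix}$ is exactly $(\gamma,\delta)$. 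Defining $u:=\alpha/\delta^{q}$ when $\delta\ne 0$ (and handling the case $\delta=0$ separately via $u=-\beta/\gamma^{q}$), one checks $N(u)=1$ using $N(\alpha)=N(\delta)$, and that $\begin{pmatrix}u&0\\0&1\end{pmatrix}\begin{pmatrix}a&b\\-b^{q}&a^{q}\end{pmatrix}=h$ using the orthogonality. Taking determinants immediately gives $u=\det(h)$, so the factorization is \emph{unique}.

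For part (ii), uniqueness from (i) reduces the count to $|U|\cdot \#\{(a,b)\in E^{2}:N(a)+N(b)=1\}$. Since $N:E^{\times}\!\to\!F^{\times}$ is surjective with kernel $U$ of order $q+1$, the fiber of $N$ over $c$ has size $q+1$ for $c\in F^{\times}$ and size $1$ for $c=0$. Splitting the count by whether $N(a)$ equals $0$, $1$, or any of the other $q-2$ values in $F$, one gets $2(q+1)+(q-2)(q+1)^{2}=q(q-1)(q+1)$, which multiplied by $|U|=q+1$ yields $q(q-1)(q+1)^{2}$.

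For part (iii), I would argue by contradiction: if $\begin{pmatrix}0&1\\-1&e_{-1}^{q}\end{pmatrix}=b\cdot u$ with $b\in B'$ and $u\in U_{2}(E)$, then the bottom row of $bu$ is $z\cdot(\gamma,\delta)$ where $z$ is the lower-right entry of $b$ and $(\gamma,\delta)$ is the bottom row of $u$. Equating rows forces $(\gamma,\delta)=(-z^{-1},z^{-1}e_{-1}^{q})$, and the unitarity condition $N(\gamma)+N(\delta)=1$ becomes $(1+N(e_{-1}^{q}))/N(z)=1$. But $N(e_{-1}^{q})=e_{-1}^{q+q^{2}}=e_{-1}^{q+1}=-1$, so the left side is $0$, a contradiction.

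For part (iv), which is the main point, I would identify $B'\backslash H$ with $\mathbb{P}^{1}(E)$ via $B'h\mapsto (0,1)h$, so that $B'U_{2}(E)$ corresponds to the $U_{2}(E)$-orbit of $[0:1]$ and $B'wU_{2}(E)$ to the orbit of $[0:1]w=[-1:e_{-1}^{q}]$. The point is that the hermitian form $N(c)+N(d)$ on $E^{2}$ is $U_{2}(E)$-invariant, so each orbit consists entirely of anisotropic lines $\{N(c)+N(d)\ne 0\}$ or entirely of isotropic lines $\{N(c)+N(d)=0\}$. Because $N:E^{\times}\!\to\!F^{\times}$ is surjective, every anisotropic line has a representative with $N(c)+N(d)=1$, so the orbit of $[0:1]$ is exactly the set of anisotropic lines, of cardinality $q^{2}+1-(q+1)=q(q-1)$ (the $q+1$ isotropic lines correspond to the $q+1$ solutions of $N(\xi)=-1$). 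By Witt's extension theorem for hermitian forms over finite fields, $U_{2}(E)$ acts transitively on isotropic lines, and $[-1:e_{-1}^{q}]$ is isotropic by the calculation in (iii), so this single orbit has size $q+1$. Together $q(q-1)+(q+1)=q^{2}+1=|\mathbb{P}^{1}(E)|$, proving the decomposition.

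The main obstacle is (iv); the key conceptual observation that makes it clean is that $B'\backslash H\simeq\mathbb{P}^{1}(E)$ carries a $U_{2}(E)$-invariant partition into isotropic and anisotropic lines, so that (iii) (which places the distinguished Weyl element on the isotropic side) combined with transitivity of $U_{2}(E)$ on each piece immediately forces exactly two double cosets.
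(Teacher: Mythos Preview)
Your proof is correct. For (i)--(ii) you give a direct argument where the paper simply cites Andrade, and your argument for (iii) is essentially the same as the paper's: both observe that the bottom row of any element of $B'U_2(E)$ has nonzero hermitian length, while the bottom row $(-1,e_{-1}^q)$ of the given matrix is isotropic.

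For (iv) the approaches genuinely diverge. The paper proceeds by brute cardinality: it computes $B'\cap U_2(E)$ and $B'\cap g_0 U_2(E) g_0^{-1}$ explicitly, deduces $|B'U_2(E)|=|B'|\cdot q(q-1)$ and $|B'g_0U_2(E)|=|B'|\cdot(q+1)$, and checks these add up to $|H|$. Your route via $B'\backslash H\simeq\mathbb{P}^1(E)$ and the $U_2(E)$-invariant isotropic/anisotropic dichotomy is cleaner and explains conceptually \emph{why} there are exactly two double cosets; invoking Witt's theorem for transitivity on isotropic lines is legitimate here. One practical point in favor of the paper's computation, however, is that the explicit descriptions of $B'\cap U_2(E)$ and $B'\cap g_0 U_2(E) g_0^{-1}$ obtained along the way are reused verbatim in the proof of the next lemma (the dimension computations for $W[\pi_1](\xi_a)$), so if you adopt your geometric argument you will still need to supply those intersections separately.
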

\begin{proof}
(i)(ii) follow from \cite[p. 242-243]{Andr}.\\
iii) $$B'U_2(E)=B'\SU_2(E)=\{\begin{pmatrix}
 x& y\\
  0&z\end{pmatrix} \cdot \begin{pmatrix}
 a& b\\
  -b^q&a^q\end{pmatrix} | x,z\in E^{\times}; a, b, y \in E; \nnn_{E/F}(a)+\nnn_{E/F}(b)=1\}$$
  $$=\{ \begin{pmatrix}
 xa-yb^q& xb+ya^q\\
  -zb^q&za^q\end{pmatrix} | x,z \in E^{\times};  a, b ,y\in E; \nnn_{E/F}(a) + \nnn_{E/F}(b)=1\}.$$
So, $\nnn_{E/F}(-b^qz)+\nnn_{E/F}(a^qz)=\nnn_{E/F}(z)\neq 0$. On the other hand, $\nnn_{E/F}(-1)+ \nnn_{E/F}(-e_{-1}^q)=0$; this implies the result.\\
iv) It is enough to check that they have the same cardinality, i.e.
$$| H|= |B'U_2(E)| +| B'  \begin{pmatrix}
 0& 1\\
  -1&e_{-1}^q\end{pmatrix} U_2(E)|.$$
  By calculation,
  $$B'\cap U_2(E)=\{ \begin{pmatrix}
 u& 0\\
   0&1\end{pmatrix} \begin{pmatrix}
 a& 0\\
   0&a^q\end{pmatrix} | u, a \in U\}.$$
   So
   $$|B'U_2(E)|=|B'||\frac{U_2(E)}{B'\cap U_2(E)}|=| B'| \cdot (q-1)q.$$
    Now let $$g=\begin{pmatrix}
 ua& ub\\
   -b^q&a^q\end{pmatrix}\in U_2(E),  g_0= \begin{pmatrix}
 0& 1\\
   -1&e_{-1}^q\end{pmatrix}.$$
    Then $$ g_0gg_0^{-1}= \begin{pmatrix}
 \big( a-be_{-1}\big)^q& b^q\\
   e_{-1}^q[ \big( a-e_{-1}b\big)^q-u\big( a-e_{-1}b\big)]& ua+\big( be_{-1}\big)^q\end{pmatrix},$$
    which is an element of $B'$ if and only if  $(a-be_{-1})^{q-1}=u$. Since $\det(g_0 g g_0^{-1})=u$, we have
 $$ B' \cap g_0 U_2(E) g_0^{-1}=\{  \begin{pmatrix}
 \big( a-be_{-1}\big)^q& b^q\\
   0&\big( a-be_{-1}\big)^{-1}\end{pmatrix}| \nnn_{E/F}(a) + \nnn_{E/F}(b)=1\}.$$
 So $$| B' \cap g_0 U_2(E) g_0^{-1}|=| \SU_2(E)|=(q-1)q(q+1).$$
  From this, we obtain
 $$|g_0 U_2(E) g_0^{-1}/ B' \cap g_0 U_2(E) g_0^{-1}|=q+1,$$
 and
 $$|B' g_0 U_2(E)|=|B'| |g_0 U_2(E) g_0^{-1}/ B' \cap g_0 U_2(E) g_0^{-1}|= |B'| \cdot (q+1).$$
 Finally
 $$ |B' U_2(E)|+ | B' g_0 U_2(E)|=|B'| \cdot (q-1)q+ |B'| \cdot (q+1)=|B'| \cdot (q^2+1)=|H|.$$
 \end{proof}

Let us determine the dimension of the vector space $W[\pi_1]$ for each $\pi_1\in \Irr_0(H)$.
 \begin{lemma}\label{dimesion2}
 Let $(\pi_1, V_1)$ be an irreducible representation of $H$ in  $\Irr_0(H)$. Then:\\
 (1) In the case  $\pi_1= \Psi\cdot 1_H$ with $\Psi\in \Irr(E^{\times})$, we have
   \begin{itemize}
   \item[(a)] $ W[\pi_1]=0$, if $\Psi \neq \Psi^q$.
   \item[(b)] $\dim_{\C} W[\pi_1](\xi_a)=\dim_{\C} W[\pi_1](\eta)=\dim_{\C} W[\pi_1](\delta)=1$,  if $\Psi = \Psi^q$.
   \end{itemize}
(2)  In the case $\pi_1= \Psi\cdot \St_H$ with $\Psi\in \Irr(E^{\times})$, we have
   \begin{itemize}
   \item[(a)]  $ W[\pi_1]=0$, if $\Psi \neq \Psi^q$.
   \item[(b)] $\dim_{\C} W[\pi_1](\xi_a)=\dim_{\C} W[\pi_1](\eta)=1$ and $\dim_{\C} W[\Psi\cdot \St_H](\delta)=0$, if $\Psi = \Psi^q$.
   \end{itemize}
(3) In the case $\pi_1= \Pi_{\Lambda,\Sigma}$ with   $\Lambda\neq \Sigma \in \Irr(E^{\times})$, we have
   \begin{itemize}
   \item[(a)] $\dim_{\C} W[\Pi_{\Lambda,\Sigma}](\xi_a)=1$,  $\dim_{\C} W[\Pi_{\Lambda,\Sigma}](\eta)=2$, $\dim_{\C} W[\Pi_{\Lambda,\Sigma}](\delta)=0$,  if $\Lambda= \Lambda^q$ and  $ \Sigma=\Sigma^q$.
   \item[(b)] $\dim_{\C} W[\Pi_{\Lambda,\Sigma}](\xi_a)=1$, $\dim_{\C} W[\Pi_{\Lambda,\Sigma}](\eta)=\dim_{\C} W[\Pi_{\Lambda,\Sigma}](\delta)=0$, if $\Lambda=\Sigma^q$ and  $\Sigma= \Lambda^q$.

   \end{itemize}
   For the other kind of  $\Lambda\neq \Sigma \in \Irr(E^{\times})$, $ W[\Pi_{\Lambda,\Sigma}]=0$.\\
(4) In the case $\pi_1= \Pi_{\Theta}$, where $\Theta \in \Irr(E_1^{\times})-\Irr(E^{\times})$ for some quadratic extension $E_1$ of $E$, we have
\begin{itemize}
   \item[(a)] $W[\Pi_{\Theta}]=0$.
\end{itemize}
\end{lemma}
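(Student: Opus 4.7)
My strategy is to apply the identity $W[\pi_1](\xi) = V_1^{\Stab_H(\xi)}$, which together with the explicit stabilizer descriptions in Proposition \ref{theobitssection2} reduces each claim to a count of fixed vectors of $\pi_1$ under $U_2(E)$, $H_1$, or $H_2$. I then treat the four families of irreducibles in turn.

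For (1), $\pi_1 = \Psi \cdot 1_H$ is one-dimensional and acts by $\Psi \circ \det$, so $V_1^K = V_1$ iff $\Psi \circ \det|_K \equiv 1$. Since $\det$ maps each of $U_2(E)$, $H_1$, $H_2$ onto the norm-one subgroup $U$, all three conditions collapse uniformly to $\Psi|_U = 1$, equivalently $\Psi = \Psi^q$; this gives (1a) and (1b) at once. For (2) I exploit the virtual identity $\Ind_{B'}^H(\Psi \circ \det|_{B'}) = (\Psi \cdot 1_H) \oplus (\Psi \cdot \St_H)$, compute the $K$-invariants of the induced representation by Mackey's formula, and subtract the contribution from (1). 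For $K = U_2(E)$, Lemma \ref{thegroupunitary}(iv) provides exactly two double cosets and each contributes $1$ when $\Psi = \Psi^q$ (the Mackey character on both intersections factors through $\Psi|_U$), giving a total of $2 - 1 = 1$. The case $K = H_1$ is analogous and also yields $1$; for $K = H_2$ the induced representation has only one contributing double coset, so the subtraction gives $0$.

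For the principal series (3), $\Pi_{\Lambda,\Sigma} = \Ind_{B'}^H(\Lambda \otimes \Sigma)$, the same Mackey framework applies without subtraction. The two double cosets in $U_2(E) \backslash H / B'$ from Lemma \ref{thegroupunitary}(iv) contribute $1$ to $\dim V_1^{U_2(E)}$ precisely when $\Lambda|_U = \Sigma|_U = 1$ (the $g = 1$ contribution, case (a)) or when $\Sigma = \Lambda^q$ (the $g = g_0$ contribution, case (b)); since $\Lambda \neq \Sigma$, at most one of these conditions can hold, so $\dim = 1$ in both (a) and (b) and $\dim = 0$ otherwise. For $K = H_1$, a parallel Mackey analysis identifies two independent non-vanishing double-coset contributions precisely in case (a), yielding $\dim = 2$; for $K = H_2$, the constraint $\det \in U$ combined with $\Lambda \neq \Sigma$ forces the invariants to vanish.

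Finally for (4), both $H_1$ and $H_2$ contain the unipotent radical $N'$, and for any cuspidal $\Pi_\Theta$ one has $V_1^{N'} = 0$: the $N'$-coinvariants vanish by cuspidality, and equal the invariants because $N'$ is a $p$-group acting in characteristic zero. Hence $V_1^{H_1} = V_1^{H_2} = 0$ is automatic. The main obstacle is showing $V_1^{U_2(E)} = 0$, since $N' \cap U_2(E) = \{I\}$ blocks the Jacquet argument. My plan is to compute $\langle 1, \Res^H_{U_2(E)} \Pi_\Theta \rangle_{U_2(E)}$ directly from the standard character of $\Pi_\Theta$: this character is supported on semisimple classes, vanishes off the elliptic torus $E_1^\times \hookrightarrow \GL_2(E)$, and on an elliptic element $x \in E_1^\times$ equals $-\Theta(x) - \Theta(x^{q^2})$. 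The resulting sum over the elliptic part of $U_2(E)$ reduces to an orthogonality sum on a subgroup of $E_1^\times$, which vanishes precisely because the hypothesis $\Theta \notin \Irr(E^\times)$ prevents $\Theta$ from being invariant under the relevant Galois action and hence from pairing non-trivially with the constant function.
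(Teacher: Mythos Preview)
Your treatment of parts (1)--(3), and of part (4) at $\eta$ and $\delta$, is essentially the paper's argument: the same Mackey/double-coset analysis using Lemma~\ref{thegroupunitary}, the same subtraction trick $\Ind_{B'}^H(\Psi\cdot 1_{B'})=(\Psi\cdot 1_H)\oplus(\Psi\cdot\St_H)$, and the same cuspidality argument $V_1^{N'}=0$ for the stabilizers containing $N'$.

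The genuine divergence is at part (4), $\xi=\xi_a$. The paper does \emph{not} compute the character sum over $U_2(E)$; instead it observes that $\SU_2(E)\subset U_2(E)$ is $H$-conjugate to $\SL_2(F)$, so $V_1^{U_2(E)}\subset V_1^{\SU_2(E)}\simeq V_1^{\SL_2(F)}$, and the last space is known to vanish for cuspidal $\Pi_\Theta$ by a result in \cite{Andr}. Your proposed direct computation is a legitimate alternative, but your sketch rests on two incorrect claims about the character of $\Pi_\Theta$. First, it is \emph{not} supported on semisimple classes: on an element $z\cdot u$ with $z$ central and $u$ a nontrivial unipotent the value is $-\Theta(z)$, and $U_2(E)$ contains many such elements because $\SU_2(E)\cong\SL_2(F)$ has a full unipotent class. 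Second, it does \emph{not} vanish off the regular elliptic locus: on a central element $z$ the value is $(q^2-1)\Theta(z)$. Since $\Pi_\Theta\in\Irr_0(H)$ forces $\Theta|_U=1$, the central part of $U_2(E)$ (which is exactly $U$, of order $q+1$) already contributes $(q^2-1)(q+1)\neq 0$ to the sum $\sum_{g\in U_2(E)}\chi_{\Pi_\Theta}(g)$. So the vanishing you need is a genuine cancellation among central, unipotent, and elliptic contributions, not a single orthogonality relation on a subgroup of $E_1^\times$; the hypothesis $\Theta\notin\Irr(E^\times)$ enters the bookkeeping in a more delicate way than your last sentence suggests. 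If you want to push this route through you must enumerate the $U_2(E)$-classes and their $\GL_2(E)$-types explicitly---or, more efficiently, adopt the paper's shortcut via $\SL_2(F)$.
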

 \begin{proof}
 1) By Proposition \ref{theobitssection2} (2), we know that the image of the map
 $\det: \Stab_H(\xi)\longrightarrow E^{\times}$ is $U$ for $\xi=\xi_a, \eta, \delta$. So
$V_1^{\Stab_H(\xi)}=\left\{\begin{array}{clrr}
     V_1 &  \textrm{ if } \Psi \textrm{ is trivial over } U, \\       0  &  \textrm{ otherwise}.
     \end{array}\right.$\\
2) Let $(\pi_2, V_2)=\Ind_{B'}^H(\Psi\cdot 1_{B'})$.
  By definition, $W[\pi_2](\xi)=W[\Psi\cdot1_H](\xi) \oplus W[\Psi \cdot \St_H](\xi)$. The dimension of  $W[\Psi \cdot 1_H](\xi)$ is known,   so it remains to calculate the dimension $W[\pi_2](\xi)$ for $\xi=\xi_a, \eta, \delta$.\\
(a) $\xi=\xi_a$.  In this case,   $\Stab_H(\xi)=U_2(E)$ and
   $H=B'U_2(E)\cup B'\begin{pmatrix}
 0& 1\\
  -1&e_{-1}^q\end{pmatrix} U_2(E)$ for $ e_{-1}\in E^{\times}$ satisfying $\nnn_{E/F}(e_{-1})=-1$. So $V_2^{U_2(E)}$ is generated by the following functions $\alpha, \beta$:
   \begin{itemize}
   \item[1.] The support of $\alpha$ is $B'U_2(E)$, $\alpha(bu)=\Psi\cdot 1_{B'}(b)$ for $b\in B', u\in U_2(E)$, and $\Psi\cdot 1_{B'}$ is trivial over $B'\cap U_2(E)$.
   \item[2.]  The support of $\beta$ is $B'\begin{pmatrix}
0& 1\\
  -1&e_{-1}^q\end{pmatrix} U_2(E)$,  $\beta\Big(b\begin{pmatrix}
 0& 1\\
  -1&e_{-1}^q\end{pmatrix}u\Big)=\Psi\cdot 1_{B'}(b)$ and $\Psi\cdot 1_{B'}$ is trivial over $B'\cap \begin{pmatrix}
 0& 1\\
  -1&e_{-1}^q\end{pmatrix}U_2(E) {\begin{pmatrix}
 0& 1\\
  -1&e_{-1}^q\end{pmatrix}}^{-1}$.
\end{itemize}
 By Lemma \ref{thegroupunitary},
 $B'\cap U_2(E)=\{ \begin{pmatrix}
 u& 0\\
   0&1\end{pmatrix} \begin{pmatrix}
 a& 0\\
   0&a^q\end{pmatrix} | u, a \in U\},$ then
  $ \left\{\begin{array}{lr}
     \alpha \neq 0 &  \textrm{ if } \Psi = \Psi^q, \\     \alpha =0  &  \textrm{ otherwise}.
     \end{array}\right.$ On the other hand, by Lemma \ref{thegroupunitary},
  $\begin{pmatrix}
 0& 1\\
  -1&e_{-1}^q\end{pmatrix} g  \begin{pmatrix}
 e_{-1}^q& -1\\
   1&0\end{pmatrix} =\begin{pmatrix}
 -(be_{-1})^q+ a^q& b^q\\
   u(-b-ae_{-1}^q)+(ae_{-1})^q-(be_{-1}^2)^q&ua+ (be_{-1})^q\end{pmatrix} \in B'$,
    which implies that
    $u(ae_{-1}^q+b)=(ae_{-1})^q-(be_{-1}^2)^q $, $\nnn_{E/F}(a)+ \nnn_{E/F}(b)=1$ and $ u\in U$.
     In particular, in case  $a=0$,   $ u=(e_{-1}b)^{q-1}$ and  $\nnn_{E/F}(e_{-1}b)=-1$; in case $b=0$,    $u=a^{q-1}$ and $\nnn_{E/F}(a)=1$.
    By calculation,      we see
  $U= \{ u| u=a^{q-1} \text{ with } a \in E^{\times} \textrm{ and } \nnn_{E/F}(a)=\pm 1\}$. Hence
     $\{ \det(g)=u| \begin{pmatrix}
 0& 1\\
  -1&e_{-1}^q\end{pmatrix} g \begin{pmatrix}
 e_{-1}^q& -1\\
   1&0\end{pmatrix} \in B',  g=\begin{pmatrix}
 u& 0\\
   0&1\end{pmatrix} \begin{pmatrix}
 a& b\\
   -b^q&a^q\end{pmatrix} \in U_2(E)\}=U$.
    Finally, we see
$ \left\{\begin{array}{cr}
     \beta \neq 0 &  \textrm{ if } \Psi = \Psi^q, \\     \beta =0  &  \textrm{ otherwise}.
     \end{array}\right.$\\
(b) $\xi=\eta$. In this case,
$\Stab_H(\xi)=H_1= N'\rtimes T''$ and $V_2^{H_1}=(V_2^{N'})^{T''}$ for $ T''=\{ \begin{pmatrix}
 u& 0\\
   0&v\end{pmatrix}| u,v\in U\}$. By Lemma \ref{dimension}, as shown above,  the vector space $V_2^{N'}$ is generated by   the functions $f_{\Psi, \Psi}, g_{\Psi, \Psi}$.  We know that
   $t\cdot f_{\Psi, \Psi}= \Psi \otimes \Psi(t) f_{\Psi, \Psi}$,  and $ t\cdot g_{\Psi, \Psi}= \Psi \otimes \Psi(t) g_{\Psi, \Psi} $ for $ t\in T$.  So
   $V_2^{H_1}=\left\{\begin{array}{cr}
     \{ f_{\Psi, \Psi}, g_{\Psi, \Psi} \} &  \textrm{ if } \Psi = \Psi^q, \\     0  &  \textrm{ otherwise}.
     \end{array}\right.$\\ (c)  $\xi=\delta$. In this case, we have
$\Stab_H(\delta)=H_2$, $H=B'H_2$ and $B'\cap H_2=\{\begin{pmatrix}
a& b\\
   0&d\end{pmatrix}| ad\in U\}$. So $\dim_{\C}V_2^{H_2}=\left\{\begin{array}{cr}
     1 &  \textrm{ if } \Psi|_U=id_U, \\     0  &  \textrm{ otherwise}.
     \end{array}\right.$\\
3)  (a)  In  case $\xi=\xi_a$,  $V_1^{U_2(E)}$ is generated by the following two functions  $\alpha, \beta$ in $V_1$:
\begin{itemize}
   \item[1.]  The support of $\alpha$ belongs to $ B'U_2(E)$,  $\alpha(bu):=\Lambda  \Sigma (b)$ for $b\in B', u\in U_2(E)$ and $\Lambda \otimes \Sigma$ is trivial over $B'\cap U_2(E)$;
   \item[2.]  The support of $\beta$ belongs to $B'\begin{pmatrix}
0& 1\\
  -1&e_{-1}^q\end{pmatrix} U_2(E)$, $\beta\Big(b\begin{pmatrix}
 0& 1\\
  -1&e_{-1}^q\end{pmatrix}u\Big):=\Lambda  \otimes \Sigma(b)$ and $\Lambda  \otimes \Sigma$ is trivial over $B'\cap \begin{pmatrix}
 0& 1\\
  -1&e_{-1}^q\end{pmatrix}U_2(E) \begin{pmatrix}
 e_{-1}^q& -1\\
   1&0\end{pmatrix}$.
    \end{itemize}
By Lemma \ref{thegroupunitary},
$B'\cap U_2(E) =\{ \begin{pmatrix}
ua& 0\\
   0&a^q\end{pmatrix}| a, u \in U\}.$
   Therefore
   $ \left\{\begin{array}{cr}
     \alpha  \neq 0 &  \textrm{ if } \Lambda=\Lambda^q, \Sigma=\Sigma^q, \\  \alpha=   0  &  \textrm{ otherwise}.
     \end{array}\right.$
On the other hand, by Lemma \ref{thegroupunitary},
    $B'\cap \begin{pmatrix}
 0& 1\\
  -1&e_{-1}^q\end{pmatrix}U_2(E) \begin{pmatrix}
 e_{-1}^q& -1\\
   1&0\end{pmatrix}=\{\begin{pmatrix}
(a-b e_{-1})^q& b^q\\
   0&( a-be_{-1})^{-1}\end{pmatrix}| \nnn_{E/F}(a)+ \nnn_{E/F}(b)=1\}$.
   Let
   $t=a-be_{-1}, s=a+be_{-1};$ then $\nnn_{E/F}(a)+\nnn_{E/F}(b)=1$ is equivalent to $ts^q+st^q=2$. And
    $ \beta  \neq 0$   if  and only if $ \beta\Bigg( \begin{pmatrix}
 t^q& b^q\\
   0&t^{-1}\end{pmatrix} \Bigg) =\Lambda^q\Sigma^{-1}(t)=1$    for  $ t\in E^{\times} $  satisfying $ ts^q+st^q=2$.
    Considering $t=s^{-q}$, we know
    $ \left\{\begin{array}{cr}
     \beta  \neq 0 &  \textrm{ if } \Sigma =\Lambda^q,\Sigma\neq\Lambda,  \\   \beta=  0  &  \textrm{ otherwise}.
     \end{array}\right.$\\
(b) In case $\xi=\eta$,   $\Stab_H(\xi)=N'\rtimes T''$ and $V_1^{N'}$ is generated by the  functions $f_{\Lambda, \Sigma}, g_{\Lambda, \Sigma}$ defined
  in Lemma \ref{dimension}. Considering the $T''$-action on $V_1^{N'}$, we know
    $V_1^{H_1}= \left\{\begin{array}{lr}
     \{ f_{\Lambda, \Sigma}, g_{\Lambda, \Sigma} \} &  \textrm{ if } \Lambda=\Lambda^q, \Sigma=\Sigma^q,  \\     0  &  \textrm{ otherwise}.
     \end{array}\right.$\\
(c) In  case  $\xi=\delta$, $\Stab_H(\xi)=H_2$,  and  $H=B'H_2,  B'\cap H_2=\{\begin{pmatrix}
 a& b\\
 0&d\end{pmatrix}| ad \in U\}$.
   $V_1^{H_2}$ is generated by the  function $f$, where
  $f(bh)=\big(\Lambda \otimes \Sigma\big)(b) \textrm{ for } b\in B', h\in H_2$ such that
  $\Lambda \otimes \Sigma$ is trivial over $B'\cap H_2$; this implies $V_1^{H_2}=0$.\\
 4) In  case $\xi=\xi_a$,
 $U_2(E) \supseteq \SU_2(E)=\SL_2(E)\cap U_2(E),$
  and there exists $h\in H$ such that $h\SU_2(E)h^{-1}=\SL_2(F)$ (see \cite[ p. 242, Proposition 4]{Andr}). Hence
  $V_1^{U_2(E)} \subseteq V_1^{\SU_2(E)} \simeq V_1^{\SL_2(F)},$
which vanishes  by \cite[ p. 82, Proposition 1]{Andr}.\\
 If  $\xi=\eta, \delta$,  then $H_i \supseteq N'$ and $V_1^{N'}=0$. So $V_1^{H_i}=0$ for $i=1,2$.
 \end{proof}
\begin{corollary}\label{theirreduciblerepresentationH}
Let $\pi_1$ be an irreducible representation of $H$ in $\Irr_0(H)$. Then:
\begin{itemize}
   \item[(i)] $\dim_{\C} W[\pi_1]=q+1$, \quad if $\pi_1=\Psi \cdot 1_H$ for $  \Psi\in \Irr(E^{\times})$ with $\Psi=\Psi^q$.
   \item[(ii)] $\dim_{\C} W[\pi_1]=q$,  \quad if $\pi_1=\Psi \cdot \St_H$ for $ \Psi\in \Irr(E^{\times})$ with $\Psi=\Psi^q$.
   \item[(iii)] $\dim_{\C} W[\pi_1] = q+1$, \quad if $\pi_1=\Pi_{\Lambda,\Sigma}$ for $\Lambda\neq \Sigma \in \Irr(E^{\times}), \Lambda= \Lambda^q , \Sigma=\Sigma^q $.
   \item[(iv)] $\dim_{\C} W[ \pi_1] = q-1$,  \quad if $\pi_1=\Pi_{\Lambda,\Sigma}$ for $\Lambda\neq \Sigma \in \Irr(E^{\times}), \Lambda= \Sigma^q, \Sigma=\Lambda^q $.
   \end{itemize}
 And the above lists are all the representations $\pi_1 \in \Irr_0(H)$, such that $W[\pi_1] \neq 0$.
\end{corollary}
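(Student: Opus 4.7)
The proof is essentially bookkeeping built on top of Lemma \ref{dimesion2}. First I would establish the vector-space decomposition
$$W[\pi_1] \;\cong\; \bigoplus_{a \in F^{\times}} W[\pi_1](\xi_a) \;\oplus\; W[\pi_1](\eta) \;\oplus\; W[\pi_1](\delta),$$
which is immediate from two ingredients already in hand: Proposition \ref{theobitssection2}(1) classifies the $H$-orbits on $M \times X_F$ as $\{\xi_a\}_{a \in F^{\times}}$, $\{\eta\}$, and $\{\delta\}$; and the observation recorded just above Proposition \ref{theobitssection2} identifies $W[\pi_1](\xi) = V_1^{\Stab_H(\xi)}$. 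Concretely, an equivariant function $f$ as in (\ref{star2}) is determined by its values on a chosen set of orbit representatives, and each value at $\xi$ can be prescribed arbitrarily in $V_1^{\Stab_H(\xi)}$, so the decomposition follows by Frobenius reciprocity for the restriction to stabilizers.

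Taking dimensions and using $|F^{\times}| = q-1$, the four asserted equalities reduce to straightforward arithmetic summing the orbit contributions of Lemma \ref{dimesion2}: case (i) yields $(q-1)\cdot 1 + 1 + 1 = q+1$, case (ii) yields $(q-1)\cdot 1 + 1 + 0 = q$, case (iii) yields $(q-1)\cdot 1 + 2 + 0 = q+1$, and case (iv) yields $(q-1)\cdot 1 + 0 + 0 = q-1$. Exhaustiveness of the displayed list requires no additional argument: for any $\pi_1 \in \Irr_0(H)$ outside these four families, Lemma \ref{dimesion2} asserts $W[\pi_1](\xi) = 0$ on every orbit, hence $W[\pi_1] = 0$. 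The only point worth mentioning is that each listed $\pi_1$ indeed belongs to $\Irr_0(H)$ (triviality of the central character on $U$), and this is automatic: when $\Psi=\Psi^q$ the character $\Psi$ descends from $F^{\times}$, while when $\Lambda=\Sigma^q$ and $\Sigma=\Lambda^q$ the central character $\Lambda\Sigma = \Lambda^{1+q}$ factors through $\nnn_{E/F}$.

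There is no genuine obstacle at this step; all substance has been absorbed into the orbit-by-orbit analysis of Lemma \ref{dimesion2}, which already treats characters, twists of Steinberg, principal series, and cuspidal $\pi_1$ separately.
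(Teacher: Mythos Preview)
Your proposal is correct and follows exactly the same approach as the paper: the paper's proof is the one-line observation $\dim_{\C}W[\pi_1]= \sum_{a\in F^{\times}} \dim_{\C} W[\pi_1](\xi_a) + \dim_{\C} W[\pi_1](\eta)+ \dim_{\C}W[\pi_1](\delta)$ combined with Lemma \ref{dimesion2}, and you have simply spelled out the orbit decomposition and the arithmetic in more detail.
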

\begin{proof}
As is known that $\dim_{\C}W[\pi_1]= \sum_{a\in F^{\times}} \dim_{\C} W[\pi_1](\xi_a) + \dim_{\C} W[\pi_1](\eta)+ \dim_{\C}W[\pi_1](\delta)$, so the results follow from above Lemma \ref{dimesion2}.
\end{proof}
\subsection{  The representation $(\pi_0, W[\pi_1])$ I}\label{determinetherepresentationI}
In this subsection, let
$\pi_1=\Psi\cdot 1_H$,   where $\quad \Psi=\Psi^q \in \Irr(E^{\times})$ and $\Psi=\psi\circ \nnn_{E/F}$ for some $\psi\in \Irr(F^{\times})$. The vector space $W[\pi_1]$ is generated by the  functions $F_a, R, S: M\times X_F \longrightarrow V_1$ for any $a\in F^{\times}$. Namely they  all satisfy the equality (\ref{star2}) and
   \begin{itemize}
   \item[1.] $\supp F_a$= Orbit$\{\xi_a\}$, $F_a(\xi_a)=v_0 \in V_1^{U_2(E)}$ for any $a\in F^{\times}$,
   \item[2.]  $\supp  R$= Orbit$\{\eta\}$, $R(\eta)=v_1 \in V_1^{H_1}$,
   \item[3.] $\supp  S$= Orbit$\{\delta\}$, $S(\delta)=v_2 \in V_1^{H_2}$.
   \end{itemize}
\begin{lemma}\label{theactionofpi}
   For $t_1, t_2, r, r_1 \neq r_2 \in F^{\times}$, we have
\[ \left\{\begin{array}{lcr}
    \pi_0(h(r))F_{ar^2}=\psi(r^{-2})F_a & & (\textrm{I}) \\
     \pi_0(h'(t))F_{at^{-1}}=F_a &  & (\textrm{II}) \\
     \pi_0(u(b))F_a=\phi^a(b)F_a &  & (\textrm{III})\\
    \pi_0(h(r))R=R &  & (\textrm{IV})
    \end{array}\right. \quad \textrm{  and } \quad
    \left\{\begin{array}{lcr}
    \pi_0(h'(t))R=\psi(t^{-1})R  &  & (\textrm{V})\\
    \pi_0(u(b))R=R &  & (\textrm{VI})  \\
    \pi_0\big(h(r)\big)S=S &  & (\textrm{VII})\\
    \pi_0(h'(t))S=\psi(t^{-1})S &  & (\textrm{VIII}) \\
    \pi_0(u(b))S=S  & & (\textrm{IX})
     \end{array}\right.\]
\end{lemma}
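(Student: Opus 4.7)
The plan is to verify each of the nine identities by direct computation, applying the $G$-action formulas (\ref{formula1})--(\ref{formula4}) and the $H$-equivariance relation (\ref{star2}), combined with the hypothesis $\Psi = \psi \circ \nnn_{E/F}$.

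First I would observe that both sides of each identity lie in the one-dimensional subspace of $W[\pi_1]$ consisting of functions supported on a single fixed $H$-orbit: for (I)--(III) the relevant orbit is $\Orbit\{\xi_a\}$, for (IV)--(VI) it is $\Orbit\{\eta\}$, and for (VII)--(IX) it is $\Orbit\{\delta\}$. A short check using the description of orbits in Proposition \ref{theobitssection2} confirms that the operators $\pi_0(h(r)), \pi_0(h'(t)), \pi_0(u(b))$ preserve each of these supports; for instance, the orbit of $\xi_a$ is $\{(m,\phi^c) : m \in M^{\times}, \, c\det m = a\}$, and both multiplication of $m$ by $r$ and replacement of $\phi^c$ by $\phi^{ct^{-1}}$ preserve the condition $c\det m \in F^\times$. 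Thus it suffices to compare both sides at the canonical representatives $\xi_a$, $\eta$, $\delta$.

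For the identities (I)--(III) involving $F_a$, I would apply each $G$-action formula to get $F_{a'}$ evaluated at a translated point, and then use (\ref{star2}) to rewrite that value as $\pi_1(h) v_0 = \Psi(\det h) v_0$ for an explicitly chosen $h \in H$. For (I) the decisive step is to take $h = aI$ with $\nnn_{E/F}(a) = r^{-1}$, which is possible by surjectivity of the norm on finite fields; this produces the scalar $\Psi(a^2) = \psi(\nnn_{E/F}(a)^2) = \psi(r^{-2})$. Identity (II) is immediate because the character parameter and the matrix determinant transform compatibly. For (III), on the orbit of $\xi_a$ one has $c\det m = a$, so the factor $\psi(b\det m) = \phi^c(b\det m)$ collapses to $\phi^a(b)$. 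Identities (V) and (VIII) are handled analogously: the key mechanism is $\Psi \circ \det = \psi \circ \nnn_{E/F} \circ \det$, so diagonal elements of $H$ with appropriately chosen norms produce the required scalar $\psi(t^{-1})$.

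For the identities involving $R$ and $S$ with $u(b)$, namely (VI) and (IX), the crucial simplification is that $\det m = 0$ on the orbits of $\eta$ and $\delta$, so the factor $\psi(b\det m)$ in (\ref{formula2}) is trivially $1$. For (IV) and (VII) I would absorb the multiplication by $r$ on the matrix part into an $h \in H$ with $\nnn_{E/F}(\det h) = 1$: for $\eta$, take $h = \diag(a,d)$ with $\nnn_{E/F}(a) = r$ and $\nnn_{E/F}(d) = r^{-1}$, which satisfies $g\diag(1,0)g^\star = r\diag(1,0)$ and gives $\Psi(\det h) = \psi(\nnn_{E/F}(ad)) = 1$; for $\delta$ the matrix part is zero and the identity is immediate after matching characters. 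The principal obstacle throughout is the bookkeeping needed to produce, for each pair (generator of $G$, orbit representative), an explicit $h \in H$ bridging the two sides of the equivariance relation; this is always feasible because $\nnn_{E/F}\colon E^{\times} \to F^{\times}$ is surjective, but the computations for (I), (V), and (VIII) require care to ensure that the $h$ chosen has norm properties consistent both with the desired transformation of the matrix argument and with the character exponent.
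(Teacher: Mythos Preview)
Your proposal is correct and follows essentially the same approach as the paper: evaluate each side at the canonical orbit representative, apply the $G$-action formulas (\ref{formula1})--(\ref{formula4}), and then invoke the $H$-equivariance (\ref{star2}) with an explicitly chosen $h\in H$ (using surjectivity of $\nnn_{E/F}$) to extract the scalar. The paper's choices of $h$ differ cosmetically from yours in places (e.g.\ for (IV) it takes $h=\diag(i,i^{-1})$ with $\nnn_{E/F}(i)=r$ rather than your $\diag(a,d)$), but the argument is the same.
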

 \begin{proof}
 Firstly  we know that $\supp (\pi_0( h(r)) F_{ar^2})  \subseteq \{ (g, \psi) | g\in M, \det(g) \neq 0, \psi \in X_F\}$.
   Fix $i \in E^{\times}$ such that $\nnn_{E/F}(i)=r$. Then
  \[\pi_0(h(r))  F_{ar^2} (\xi_b)
= F_{ar^2}\Big(\begin{pmatrix}
  r& 0\\
   0&r\end{pmatrix}, \phi^b\Big)
=F_{ar^2}\Big(\nnn_{E/F}(r){\begin{pmatrix}
  i^{-1}& 0\\
   0& i^{-1}\end{pmatrix}}\begin{pmatrix}
  1& 0\\
   0&1 \end{pmatrix}{\begin{pmatrix}
  i^{-q}& 0\\
   0& i^{-q}\end{pmatrix}}, \phi^b\Big)\]
$$=\pi_1\Big(\begin{pmatrix}
  i^{-1}& 0\\
   0& i^{-1}\end{pmatrix}\Big)\circ F_{ar^2}(\xi_{br^2})
  = \left\{\begin{array}{cr}
     \pi_1\Big(\begin{pmatrix}
  i^{-1}& 0\\
   0& i^{-1}\end{pmatrix}\Big) v_0  =\pi_1\Big(\begin{pmatrix}
  i^{-1}& 0\\
   0& i^{-1}\end{pmatrix}\Big) v_0
=\Psi(i^{-2})v_0
=\psi(r^{-2})F_a(\xi_a)   &  \textrm{ if } b=a, \\     0  &  \textrm{ otherwise}.
     \end{array}\right. $$
Hence (I) follows.  Similarly,   $\pi_0(h'(t))F_{at^{-1}}(\xi_a)=F_{at^{-1}}(\xi_{at^{-1}})=v_0=F_a(\xi_a)$.  So we obtain (II), and (III) is clear.\\
 For (IV),  by (\ref{formula1})--- (\ref{formula4}), we have $\pi_0(h(r) )R (\eta)
=R\Big(\begin{pmatrix}
  r& 0\\
   0&0\end{pmatrix}, \phi\big)
=R\big({\begin{pmatrix}
  i& 0\\
   0& i^{-1}\end{pmatrix}}\begin{pmatrix}
  1& 0\\
   0&0 \end{pmatrix}{\begin{pmatrix}
  i^q& 0\\
   0& i^{-q}\end{pmatrix}}, \phi^a\Big)$
$=\pi_1\Big(\begin{pmatrix}
  i& 0\\
   0& i^{-1}\end{pmatrix}\Big)\circ R(\eta)
=\pi_1\Big(\begin{pmatrix}
  i& 0\\
   0& i^{-1}\end{pmatrix}\Big) v_1
=v_1
=R(\eta),$ which implies our (IV).
Now we let $x\in E^{\times}$ satisfying $\nnn_{E/F}(x)=t$. Then $$\pi_0(h'(t))R(\eta)
=R\Big(\begin{pmatrix}
  1& 0\\
   0&0\end{pmatrix}, \phi^{t^{-1}}\Big)
=R\Big(\nnn_{E/F}(x){\begin{pmatrix}
 x^{-1}& 0\\
   0& 1\end{pmatrix}}\begin{pmatrix}
  1& 0\\
   0&0\end{pmatrix}{\begin{pmatrix}
  x^{-q}& 0\\
   0& 1\end{pmatrix}}, \phi^{\nnn_{E/F}(x^{-1})}\Big)
=\Psi(x^{-1}) R(\eta)= \psi(t^{-1})R(\eta);$$
thus we obtain (V). The following (VI), (VII) and (IX) are easy to verify. However $$\big(\pi_0(h'(t))S\big)(\delta)
=S\Big(\begin{pmatrix}
  0& 0\\
   0&0\end{pmatrix}, \phi^{t^{-1}}\Big)=S\Big(\nnn_{E/F}(x){\begin{pmatrix}
 x^{-1}& 0\\
   0& 1\end{pmatrix}}\begin{pmatrix}
  0& 0\\
   0&0\end{pmatrix}{\begin{pmatrix}
  x^{-q}& 0\\
   0& 1\end{pmatrix}}, \phi^{\nnn_{E/F}(x^{-1})}\Big)=\Psi(x^{-1}) S(\delta)= \psi(t^{-1})S(\delta);$$ in this way we verify  (VIII).
 \end{proof}
\begin{corollary}\label{thetraceofpi0}
For $t_1, t_2, r, r_1 \neq r_2 \in F^{\times}$, we have
 \begin{itemize}
   \item[(1)] $\tr \pi_0  \begin{pmatrix}
  r& 0\\
   0&r \end{pmatrix}=(q+1) \psi(r^{-2})$.
   \item[(2)]  $\tr \pi_0 \begin{pmatrix}
  r_1& 0\\
   0&r_2 \end{pmatrix}=2 \psi(r_1^{-1}r_2^{-1})$.
   \item[(3)] $\tr \pi_0 \begin{pmatrix}
  r& 1\\
   0&r \end{pmatrix}=\psi(r^{-2})$.
   \end{itemize}
 \end{corollary}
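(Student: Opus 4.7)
The plan is to exploit the explicit basis $\{F_a : a \in F^\times\} \cup \{R, S\}$ of $W[\pi_1]$ (which has dimension $q+1$, matching the count in Corollary \ref{theirreduciblerepresentationH}(i)), and to compute each trace by decomposing the given matrix as a product of the generators $h(r), h'(t), u(b)$, then reading off the diagonal entries from Lemma \ref{theactionofpi}.

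For (1), write $\begin{pmatrix} r & 0 \\ 0 & r \end{pmatrix} = h(r)\, h'(r^2)$. From (II) and (I), $\pi_0(h'(r^2))F_a = F_{ar^2}$ and $\pi_0(h(r))F_{ar^2} = \psi(r^{-2})F_a$, so $F_a$ is an eigenvector with eigenvalue $\psi(r^{-2})$; from (IV)--(V) and (VII)--(VIII) the same eigenvalue appears on $R$ and on $S$. Summing the $q-1$ contributions from the $F_a$ with the two from $R, S$ yields $(q+1)\psi(r^{-2})$.

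For (2), write $\begin{pmatrix} r_1 & 0 \\ 0 & r_2 \end{pmatrix} = h(r_1)\, h'(r_1 r_2)$. On $F_a$, successive applications give $\pi_0(h(r_1))F_{a r_1 r_2} = \psi(r_1^{-2}) F_{a r_2 r_1^{-1}}$, which is a scalar multiple of $F_a$ only when $r_1 = r_2$; for $r_1 \neq r_2$ the $F_a$ basis contributes zero to the trace. On $R$ and $S$, using (IV)--(V) and (VII)--(VIII), each contributes $\psi(r_1^{-1} r_2^{-1})$, giving the claimed $2\psi(r_1^{-1}r_2^{-1})$. (If $r_1 = r_2$ the formula is subsumed by (1).)

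For (3), the key decomposition is $\begin{pmatrix} r & 1 \\ 0 & r \end{pmatrix} = h(r)\, u(r^{-3})\, h'(r^2)$, which is easy to check by direct multiplication. Applying the generators in this order: on $F_a$ we get $F_a \mapsto F_{ar^2} \mapsto \phi(ar^{-1}) F_{ar^2} \mapsto \psi(r^{-2})\phi(ar^{-1}) F_a$ using (II), (III), (I); on $R$ we get $\psi(r^{-2}) R$ using (V), (VI), (IV); and similarly $\psi(r^{-2}) S$. Summing the diagonal entries and using the elementary identity $\sum_{a \in F^\times} \phi(ar^{-1}) = -1$ gives
\begin{equation*}
\psi(r^{-2})\sum_{a \in F^\times}\phi(ar^{-1}) + \psi(r^{-2}) + \psi(r^{-2}) = -\psi(r^{-2}) + 2\psi(r^{-2}) = \psi(r^{-2}).
\end{equation*}
The whole argument is bookkeeping once the three matrix decompositions are in hand; the only mildly non-trivial step is the character-sum cancellation in (3), which is where the unipotent direction trims the principal-series-like count $q+1$ down to a single $\psi(r^{-2})$.
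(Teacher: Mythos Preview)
Your proof is correct and is exactly the computation the paper leaves to the reader: it states the corollary without proof, as an immediate consequence of Lemma~\ref{theactionofpi}, and your decomposition into generators and diagonal-entry bookkeeping is the intended argument. The matrix factorizations, the eigenvalue computations on the basis $\{F_a,R,S\}$, and the character-sum cancellation in (3) are all accurate.
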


\begin{proposition}
 $\check{\pi}_0\simeq \psi\cdot \Ind_B^G1_G$.
 \end{proposition}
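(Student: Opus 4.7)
The plan is to combine a dimension count with Frobenius reciprocity, using the explicit $B$-eigenvectors that have already been produced in Lemma \ref{theactionofpi}. We have at our disposal two facts: (a) by Corollary \ref{theirreduciblerepresentationH}(i), $\dim_{\C} W[\pi_1]=q+1$, which is exactly $\dim (\psi\cdot\Ind_B^G 1_G)$; and (b) the relations (IV)---(IX) of Lemma \ref{theactionofpi} show that both $R$ and $S$ are eigenvectors for the standard Borel $B$. Since the supports of $R$ and $S$ lie in disjoint $H$-orbits ($\Orbit\{\eta\}$ and $\Orbit\{\delta\}$ respectively), they are linearly independent in $W[\pi_1]$.

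Next, I would identify the character of $B$ by which $R$ and $S$ transform. Writing any element of $T$ as $\begin{pmatrix}a&0\\0&d\end{pmatrix}=h(a)\,h'(ad)$, and using $\pi_0(h(r))R=R$, $\pi_0(h'(t))R=\psi(t^{-1})R$ together with the analogous identities for $S$, both $R$ and $S$ span one-dimensional $B$-stable lines on which $B$ acts by the character
\begin{equation*}
\tilde\psi\!:\begin{pmatrix}a&b\\0&d\end{pmatrix}\longmapsto \psi^{-1}(ad).
\end{equation*}
Hence $\dim\Hom_B(\tilde\psi,\Res_B^G \pi_0)\geq 2$, and by Frobenius reciprocity $\dim\Hom_G(\Ind_B^G\tilde\psi,\pi_0)\geq 2$.

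The induced representation $\Ind_B^G\tilde\psi$ is $\psi^{-1}\!\cdot\!\Ind_B^G 1_G=(\psi^{-1}\!\cdot\! 1_G)\oplus(\psi^{-1}\!\cdot\!\St_G)$, a sum of two distinct irreducible representations. A Hom-space of dimension at least $2$ from such a sum to $\pi_0$ forces both summands to embed into $\pi_0$. Comparing dimensions ($1+q=q+1=\dim_{\C}W[\pi_1]$), we conclude
\begin{equation*}
\pi_0\,\simeq\,\psi^{-1}\!\cdot\! 1_G\,\oplus\,\psi^{-1}\!\cdot\!\St_G\,\simeq\,\psi^{-1}\!\cdot\!\Ind_B^G 1_G.
\end{equation*}
Passing to contragredients, using Lemma 1.2 which gives $(\psi^{-1}\!\cdot\! 1_G)^\vee=\psi\!\cdot\! 1_G$ and $(\psi^{-1}\!\cdot\!\St_G)^\vee=\psi\!\cdot\!\St_G$, yields $\check\pi_0\simeq\psi\!\cdot\!\Ind_B^G 1_G$.

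There is no serious obstacle here: once Lemma \ref{theactionofpi} and Corollary \ref{theirreduciblerepresentationH} are in hand, the only step requiring attention is checking that the two vectors $R$ and $S$ are genuinely linearly independent (which is immediate from their disjoint supports) and that they share the same $B$-character, so that Frobenius reciprocity produces the required multiplicity $2$. The argument completely bypasses any character computation on elliptic elements of $G$, since the dimensional squeeze after Frobenius reciprocity does all the work.
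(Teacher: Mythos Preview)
Your argument has a genuine gap at the step ``A Hom-space of dimension at least $2$ from such a sum to $\pi_0$ forces both summands to embed into $\pi_0$.'' This inference is false in general: writing $m_1$ and $m_2$ for the multiplicities of $\psi^{-1}\!\cdot\!1_G$ and $\psi^{-1}\!\cdot\!\St_G$ in $\pi_0$, Frobenius reciprocity gives only $m_1+m_2\geq 2$, and the configuration $(m_1,m_2)=(2,0)$ is not excluded by anything you have said. In that case $\pi_0$ would contain $2(\psi^{-1}\!\cdot\!1_G)$, of dimension $2$, and the remaining $(q-1)$-dimensional complement could perfectly well be a cuspidal $\pi_\theta$ with $\theta|_{F^\times}=\psi^{-2}$; by Proposition~\ref{Rest1}(4) its restriction to $B$ is $\psi^{-2}\otimes\sigma$, so this alternative matches the full $B$-restriction and survives your dimension squeeze.

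This is exactly the bifurcation the paper confronts. After identifying $\Res_B^G\pi_0\simeq 2\sigma_{\psi^{-1},\psi^{-1}}\oplus(\psi^{-2}\otimes\sigma)$, the paper observes that either $\pi_0\simeq\psi^{-1}\!\cdot\!\Ind_B^G 1_G$ or $\pi_0\simeq 2(\psi^{-1}\!\cdot\!1_G)\oplus\pi_\theta$, and then \emph{rules out} the second option by a direct calculation: evaluating $(\pi_0(\omega)S)(\delta)=q^{-2}S(\delta)$ shows $\pi_0(\omega)S\neq S$, so $S$ does not lie in a copy of $\psi^{-1}\!\cdot\!1_G$, whence $m_1\leq 1$. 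Your proof needs an ingredient of this kind---some computation showing that at least one of $R,S$ is not fixed (up to the central twist) by all of $G$---before the dimension count can close the argument.
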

 \begin{proof}
By Corollary \ref{thetraceofpi0}, we know $\Res_B^G\pi_0\simeq (2\sigma_{\psi^{-1},\psi^{-1}})\oplus (\psi^{-2}\otimes \sigma) \simeq \Res_B^G(\psi^{-1} \cdot \Ind_B^G1_B)$ for irreducible representations $\sigma_{\psi^{-1}, \psi^{-1}}$, $\psi^{-1} \otimes \sigma$ of $B$ defined in Theorem \ref{representationsofB}
 and the isotypic components $2\sigma_{\psi^{-1},\psi^{-1}}$ are spanned by the functions $R, S$. By Proposition \ref{Rest1}, we have (i) $\pi_0\simeq \psi^{-1}\cdot \Ind_B^G1_G$,  or (ii) $\pi_0\simeq ( 2 \psi^{-1}\cdot 1_G )\oplus \pi_{\theta}$
for certain regular character $\theta$ of $E^{\times}$. But $\big(\pi_0(\omega)S\big)(\delta)= q^{-2}\sum_{n\in M} S(n,\phi)\phi(B(0,n))= q^{-2}S(\delta)$
 so that $
\pi_0(\omega)S\neq S$;
 this means that $\pi_0$  has  at most only one isotypic component $\psi^{-1} \cdot 1_G$. Therefore the above case (ii) is impossible.
\end{proof}

\subsection{The representation $(\pi_0, W[\pi_1])$ II}
In this subsection, let $\pi_1=\Psi\cdot \St_H$, where  $\quad \Psi=\Psi^q \in \Irr(E^{\times})$, $\Psi=\psi\circ \nnn_{E/F}$ for some $\psi\in \Irr(F^{\times})$. The vector space $W[\pi_1]$ is generated by the functions:  $F_a, R: M\times X_F \longrightarrow V_1$ for any $a\in F^{\times}$. They  all satisfy the equality
(\ref{star2}), and
  \begin{itemize}
   \item[1.]   $\supp(F_a)=$ Orbit$\{\xi_a\}$, $F_a(\xi_a)=v_0 \in V_1^{U_2(E)}$ for any $a\in F^{\times}$,
   \item[2.]   $\supp(R)=$ Orbit$\{\eta\}$, $R(\eta)=v_1=q^2f_{\Psi, \Psi}-g_{\Psi,\Psi} \in V_1^{H_1}$ by  Lemma \ref{dimension}.
   \end{itemize}
Similarly as in Section  \ref{determinetherepresentationI},  we  obtain:
\begin{lemma}
\[ \left\{\begin{array}{lr}
    \pi_0(h(r))F_{ar^2}=\psi(r^{-2})F_a & (\textrm{XI}) , \\     \pi_0(h'(t))F_{at^{-1}}=F_a &  (\textrm{XII}),
    \\ \pi_0(u(b))F_a=\phi^a(b)F_a &  (\textrm{XIII}),
    \end{array}\right. \qquad \textrm{  and  } \qquad
    \left\{\begin{array}{lr}
    \pi_0(h(r))R=R & (\textrm{XIV}),\\
    \pi_0(h'(t))R=\psi(t^{-1})R &  (\textrm{XV}),\\
    \pi_0(u(b))R=R &  (\textrm{XVI}).
     \end{array}\right.\]
\end{lemma}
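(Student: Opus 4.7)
The plan is to recycle the proof of Lemma~\ref{theactionofpi} almost verbatim. The functions $F_a$ and $R$ here are defined by the same orbit-support recipe as in the previous subsection; only the target vectors $v_0 \in V_1^{U_2(E)}$ and $v_1 \in V_1^{H_1}$ now sit inside $\Psi\!\cdot\!\St_H$ rather than $\Psi\!\cdot\! 1_H$. So each identity will follow from (i) the Weil-representation formulas (\ref{formula1})--(\ref{formula4}), (ii) the equivariance relation (\ref{star2}), and (iii) a short computation of the $T'$-action on $v_0$ or $v_1$ inside $\pi_1$.

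For (XI)--(XIII): I would pick $i \in E^{\times}$ with $\nnn_{E/F}(i)=r$ and $x \in E^{\times}$ with $\nnn_{E/F}(x)=t$. Applying (\ref{formula1})--(\ref{formula4}) to $F_{ar^2}$, $F_{at^{-1}}$, $F_a$ at $(\xi_b, \phi)$ puts the argument in the shape $\bigl(h\,\xi_c\,h^{\star}\nnn_{E/F}(\det h)^{-1},\,\phi^{\nnn_{E/F}(\det h)}\bigr)$ for an obvious diagonal $h$, so (\ref{star2}) lets me pull out $\pi_1(h)$ acting on $v_0 \in V_1^{U_2(E)}$. Since $\pi_1 = \Psi\!\cdot\!\St_H$ and $\Psi = \psi \circ \nnn_{E/F}$, the central element $\mathrm{diag}(i^{-1},i^{-1})$ contributes $\Psi(i^{-2}) = \psi(r^{-2})$, the non-central diagonal $\mathrm{diag}(x^{-1},1)$ will cancel once we verify its action on $v_0$ is trivial (again by $U_2(E)$-invariance together with a suitable central twist), and (XIII) follows from the determinant of $\xi_a = (I,\phi^a)$ being $1$ combined with (\ref{formula2}).

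For (XIV)--(XVI): the same moves reduce everything to computing $\pi_1(\mathrm{diag}(i,i^{-1}))\,v_1$ and $\pi_1(\mathrm{diag}(x^{-1},1))\,v_1$ for $v_1 = q^2 f_{\Psi,\Psi} - g_{\Psi,\Psi}$. From Lemma~\ref{dimension}, the torus $T'$ acts on $f_{\Psi,\Psi}$ by $\Psi\otimes\Psi$ and on $g_{\Psi,\Psi}$ by $\Psi\otimes\Psi$ (the Weyl swap is invisible because $\chi_1 = \chi_2 = \Psi$), so both generators are simultaneous eigenvectors with the same character. Hence $\mathrm{diag}(i,i^{-1})$ acts as $\Psi(i)\Psi(i^{-1}) = 1$, giving (XIV), and $\mathrm{diag}(x^{-1},1)$ acts as $\Psi(x^{-1}) = \psi(t^{-1})$, giving (XV). Identity (XVI) is immediate: the whole $H$-orbit of $\eta$ consists of matrices of determinant $0$, so the phase $\psi(b\det m)$ in (\ref{formula2}) is $1$ on the support of $R$.

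The essentially only point that differs from the previous lemma is the Steinberg bookkeeping in the third paragraph, namely verifying that the specific combination $q^2 f_{\Psi,\Psi} - g_{\Psi,\Psi}$ is a $T'$-eigenvector with the same eigencharacter as $f_{\Psi,\Psi}$ alone. This is where I expect the one genuine check to live, but it is forced by $\chi_1 = \chi_2$ and therefore costs nothing beyond quoting Lemma~\ref{dimension}; in particular no analogue of the function $S$ appears here because $W[\Psi\!\cdot\!\St_H](\delta) = 0$ by Lemma~\ref{dimesion2}(2), so there are no further identities to verify.
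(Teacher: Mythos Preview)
Your proposal is correct and matches the paper's approach exactly: the paper's entire proof is the single phrase ``Similarly as in Section~\ref{determinetherepresentationI}'', and you have spelled out precisely why the computations of Lemma~\ref{theactionofpi} transfer, correctly isolating the only new ingredient (the $T'$-eigenvector check for $v_1=q^2 f_{\Psi,\Psi}-g_{\Psi,\Psi}$, which is immediate since $\chi_1=\chi_2=\Psi$). One minor remark: your sketch of (XII) via ``$U_2(E)$-invariance together with a suitable central twist'' is more elaborate than needed---the paper's proof of the analogous (II) simply evaluates $\pi_0(h'(t))F_{at^{-1}}(\xi_a)=F_{at^{-1}}(\xi_{at^{-1}})=v_0$ directly from formula~(\ref{formula4}), with no $\pi_1$-action involved.
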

\begin{lemma}
Let $M^{(1)}=\{ n \in M| rank \,n=1\}$. Then:
\begin{itemize}
\item[(a)] $ M^{(1)}=\{   \begin{pmatrix}
  s& 0\\
   0& 0\end{pmatrix}| s\in F^{\times}\} \cup \{ s\begin{pmatrix}
  \nnn_{E/F}(b)& b\\
   b^q& 1\end{pmatrix}| s\in F^{\times}, b\in E\}.$
\item[(b)] $s\begin{pmatrix}
  \nnn_{E/F}(b)& b\\
   b^q& 1\end{pmatrix}=u(b) \begin{pmatrix}
  0& 0\\
   0& s\end{pmatrix}u(b)^{\star}$ for $s\in F^{\times}$.
\end{itemize}
\end{lemma}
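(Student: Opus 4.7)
The plan is a short case analysis followed by a direct $2\times 2$ matrix multiplication.

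For (a), I take an arbitrary $m=\begin{pmatrix} x & \alpha \\ \bar\alpha & y \end{pmatrix}\in M$ and use that $\rank m = 1$ is equivalent to $m\neq 0$ together with $\det m = xy-\nnn_{E/F}(\alpha)=0$. I then split on whether $y$ vanishes. If $y=0$, the determinant relation forces $\nnn_{E/F}(\alpha)=0$, and since $E/F$ is a field extension the norm form is anisotropic, so $\alpha=0$; the nonvanishing of $m$ then forces $x\in F^{\times}$, giving the first listed family. Otherwise $y\in F^{\times}$, and I set $s:=y$ and $b:=\alpha/s\in E$; the determinant condition $xy=\nnn_{E/F}(\alpha)=s^{2}\nnn_{E/F}(b)$ rearranges to $x=s\,\nnn_{E/F}(b)$, producing exactly the second listed family. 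Since both families are visibly hermitian of rank one, this exhausts (a).

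For (b), it suffices to carry out the $2\times 2$ multiplication over $E$. Using $u(b)=\begin{pmatrix} 1 & b \\ 0 & 1 \end{pmatrix}$ and $u(b)^{\star}=\overline{u(b)}^{t}=\begin{pmatrix} 1 & 0 \\ b^{q} & 1 \end{pmatrix}$, one first computes $\begin{pmatrix} 0 & 0 \\ 0 & s \end{pmatrix}u(b)^{\star}=\begin{pmatrix} 0 & 0 \\ sb^{q} & s \end{pmatrix}$; left-multiplication by $u(b)$ then yields $\begin{pmatrix} sb\,b^{q} & sb \\ sb^{q} & s \end{pmatrix}=s\begin{pmatrix} \nnn_{E/F}(b) & b \\ b^{q} & 1 \end{pmatrix}$, as asserted.

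There is no genuine obstacle in the argument. The real content of the lemma, to be used downstream, is the structural statement that every rank-one element of $M$ is either of the degenerate form $\diag(s,0)$ or arises from a unipotent conjugation of $\diag(0,s)$; this is precisely what will enable subsequent $H$-orbit computations on $M^{(1)}$ to be reduced to manipulations inside the Borel subgroup $B'$ of $H$.
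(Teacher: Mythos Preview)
Your proof is correct. The paper does not actually give an argument for this lemma but simply cites Andrade \cite[p.~246--247]{Andr}; your case split on $y$ together with the anisotropy of the norm form $\nnn_{E/F}$ is exactly the expected elementary verification, and the matrix computation in (b) is accurate.
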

\begin{proof}
See \cite[ p.246---247]{Andr}.
\end{proof}
Now, let us consider
 $$\big(\pi_0(\omega)R\big)(\eta)=-q^{-2} \sum_{n\in M} R(n, \phi)\phi\bigg(B\Big(\begin{pmatrix}
  1& 0\\
   0& 0\end{pmatrix}, n \Big)\bigg)
\stackrel{\supp R=M^{(1)}}{=}-q^{-2} \sum_{n\in M^{(1)}} R(n, \phi)\phi\bigg(B\Big(\begin{pmatrix}
  1& 0\\
   0& 0\end{pmatrix}, n \Big)\bigg)$$
$$=-q^{-2}\sum_{s\in F^{\times}}\Bigg[ R\bigg(\begin{pmatrix}
  s& 0\\
   0& 0\end{pmatrix}, \phi\bigg) \phi\Bigg(B\bigg(\begin{pmatrix}
  1& 0\\
   0& 0\end{pmatrix},\begin{pmatrix}
  s& 0\\
   0& 0\end{pmatrix}\bigg)\Bigg)+ \sum_{b\in E} R\bigg(u(b)\begin{pmatrix}
  0& 0\\
   0& s\end{pmatrix}u(b)^{\star}, \phi\bigg)\phi\Bigg(B\bigg(\begin{pmatrix}
  1& 0\\
   0& 0\end{pmatrix},u(b)\begin{pmatrix}
  0& 0\\
   0& s\end{pmatrix}u(b)^{\star}\bigg)\Bigg)\Bigg]$$
$$=-q^{-2} \sum_{s\in F^{\times}} [  \pi_0\Big( h(s)\Big)R\big(\eta\big) +  \sum_{b\in E} \pi_1 \Bigg(u(b)\begin{pmatrix}
  0& -1\\
   1& 0\end{pmatrix}\Bigg) R\bigg( \begin{pmatrix}
 s& 0\\
   0& 0\end{pmatrix}, \phi\bigg)\phi(s)]
 =-q^{-2} \sum_{s\in F^{\times}} [ v_1 +  \sum_{b\in E} \pi_1 \Bigg(u(b)\begin{pmatrix}
  0& -1\\
   1& 0\end{pmatrix}\Bigg) R\bigg( \begin{pmatrix}
 s& 0\\
   0& 0\end{pmatrix}, \phi\bigg)\phi(s)] $$
 $$=-q^{-2} \sum_{s\in F^{\times}}\pi_1(u(b)) [ v_1 + \phi(s) \sum_{b \in E} \pi_1\Bigg(\begin{pmatrix}
  0&-1\\
   1& 0\end{pmatrix} \Bigg)v_1]=-q^{-2}\pi_1(u(b))[(q-1)v_1- q^2\pi_1\Bigg(\begin{pmatrix}
  0& -1\\
   1& 0\end{pmatrix} \Bigg)v_1] \neq R(\eta).$$
It follows that
\begin{equation}\label{formula22}\tag{XVII}
\pi_0(\omega)R\neq R
 \end{equation}
\begin{proposition}
$\check{\pi}_0\simeq \psi\cdot \St_G$.
\end{proposition}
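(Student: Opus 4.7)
The plan is to mirror the argument of the previous subsection. The guiding idea: since $\dim_{\C} W[\pi_1] = q$ by Corollary \ref{theirreduciblerepresentationH}, and the only irreducible $G$-representations of dimension $q$ are the twisted Steinbergs, the answer is forced up to a short alternative that can be killed with (XVII).

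First, I would compute $\Res_B^G \pi_0$ using the natural basis $\{F_a : a \in F^\times\} \cup \{R\}$ of $W[\pi_1]$ and the action formulas (XI)--(XVI). Writing a general element of $T$ as $\operatorname{diag}(r_1,r_2) = h(r_1)\,h'(r_1 r_2)$, one finds $\pi_0(\operatorname{diag}(r_1,r_2)) F_a = \psi(r_1^{-2}) F_{ar_2/r_1}$ and $\pi_0(\operatorname{diag}(r_1,r_2)) R = \psi(r_1^{-1}r_2^{-1}) R$; further, $\pi_0(u(b))F_a=\phi(ab)F_a$ and $\pi_0(u(b))R=R$. Hence $R$ spans a copy of the character $\sigma_{\psi^{-1},\psi^{-1}}$ of $B$, while the $(q-1)$-dimensional subspace $\bigoplus_a \C F_a$ realizes the irreducible $B$-representation $\psi^{-2} \otimes \sigma$ (the $Z$-action is by $\psi^{-2}$ and the $N$-action on the basis $\{F_a\}$ is through the non-trivial characters $\phi^a$, which is exactly the Schr\"odinger model of $\sigma$). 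Thus
\[
\Res_B^G \pi_0 \;\simeq\; \sigma_{\psi^{-1},\psi^{-1}} \,\oplus\, (\psi^{-2} \otimes \sigma).
\]

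Next, I would compare with Proposition \ref{Rest1}. Dimension $q$ on $G$ only occurs for $\psi'\!\cdot\!\St_G$ (dim $q$) or as the sum $\psi'\!\cdot\!1_G \oplus \pi_\theta$ (dim $1 + (q-1)$). Matching the $B$-restrictions, the only two possibilities are
\[
(\mathrm{i})\ \pi_0 \simeq \psi^{-1}\!\cdot\!\St_G, \qquad (\mathrm{ii})\ \pi_0 \simeq \psi^{-1}\!\cdot\!1_G \,\oplus\, \pi_\theta
\]
for some regular character $\theta$ of $E^\times$ with $\theta|_{F^\times}=\psi^{-2}$.

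The main, and only, obstacle is to eliminate case (ii); this is the step that will use (XVII). In case (ii), the one-dimensional $\sigma_{\psi^{-1},\psi^{-1}}$-isotypic subspace of $\Res_B^G \pi_0$ must be the copy of $\psi^{-1}\!\cdot\!1_G$, because $\Res_B^G \pi_\theta = (\theta|_{F^\times})\otimes\sigma = \psi^{-2}\otimes\sigma$ by Proposition \ref{Rest1}(4) contributes nothing to the $\sigma_{\psi^{-1},\psi^{-1}}$-isotypic part. Since $R$ spans this isotypic line, $R$ would then lie in a copy of $\psi^{-1}\!\cdot\!1_G$, on which $\omega$ acts as $\psi^{-1}(\det \omega) = \psi^{-1}(1) = 1$; i.e.\ we would have $\pi_0(\omega) R = R$. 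This directly contradicts (XVII), ruling out (ii). Hence $\pi_0 \simeq \psi^{-1}\!\cdot\!\St_G$, and passing to the contragredient with the lemma $(\psi^{-1}\!\cdot\!\St_G)^{\vee}\simeq \psi\!\cdot\!\St_G$ yields the claim.
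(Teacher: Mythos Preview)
Your proof is correct and follows essentially the same approach as the paper's: compute $\Res_B^G\pi_0$ from (XI)--(XVI), match it with Proposition~\ref{Rest1} to reduce to the alternative $\psi^{-1}\!\cdot\!\St_G$ versus $\psi^{-1}\!\cdot\!1_G\oplus\pi_\theta$, and eliminate the latter using (XVII). The only difference is that you spell out explicitly why (XVII) kills case (ii) (namely, that $R$ would then generate the $\psi^{-1}\!\cdot\!1_G$-summand and hence be $\omega$-fixed), whereas the paper compresses this into a single sentence.
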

\begin{proof}
By the formulas (XI)---(XVI), we obtain $\Res_B^G\pi_0=\Res_B^G(\psi^{-1}\cdot\St_G)$. Consequently  by the formula (XVII), $\pi_0$ has no $\psi^{-1}\cdot 1_G$ isotypic component.  Comparing  this with Proposition \ref{Rest1} gives  the result.
\end{proof}

\subsection{ The representation $(\pi_0, W[\pi_1])$ III}

In this subsection, let $\pi_1=\Pi_{\Lambda, \Sigma}$ for $ \Lambda\neq \Sigma$ and $\Lambda=\lambda\circ\nnn_{E/F}, \Sigma=\sigma\circ\nnn_{E/F}\in \Irr(E^{\times})$. The vector space $V_1^{H_1}$($=V_1^{N'}$) is generated by the two functions $f_{\Lambda, \Sigma}, g_{\Lambda, \Sigma}$ defined in  Lemma \ref{dimension}.
Let $\Delta: M\times X_F \longrightarrow  V_1$  satisfying $(\ref{star2})$ and $ \supp(\Delta)$= Orbit$\{\eta\}$, $\Delta(\eta)=f_{\Lambda, \Sigma}$. Then
$$(i) \qquad \Big(\pi_0\big(h(r)\big)\Delta\Big)(\eta)
=\Delta\Big(\begin{pmatrix}
  r& 0\\
   0& 0\end{pmatrix}, \phi\Big)
=\Delta\Big({\begin{pmatrix}
 x& 0\\
   0&x^{-1}\end{pmatrix}}\begin{pmatrix}
  1& 0\\
   0&0\end{pmatrix}{\begin{pmatrix}
  x^{q}& 0\\
   0& x^{-q}\end{pmatrix}}, \phi\Big)
= \Pi_{\Lambda, \Sigma}\Big(\begin{pmatrix}
  x& 0\\
   0&x^{-1}\end{pmatrix}\Big) \Delta(\eta)$$
$$=\Pi_{\Lambda, \Sigma}\Big(\begin{pmatrix}
  x& 0\\
   0&x^{-1}\end{pmatrix}\Big)[ f_{\Lambda,\Sigma}]
=\Lambda(x) \Sigma(x^{-1}) f_{\Lambda,\Sigma}
=\lambda(r)\sigma(r^{-1}) f_{\Lambda,\Sigma} \textrm{  for } \nnn_{E/F}(x)=r.$$
$$(ii) \qquad\qquad \Big(\pi_0\big(h'(t)\big)\Delta\Big)(\eta)=
\Delta\Big(\begin{pmatrix}
  1& 0\\
   0& 0\end{pmatrix}, \phi^{t^{-1}}\Big)
=\Delta\Big(\nnn_{E/F}(x){\begin{pmatrix}
 x^{-1}& 0\\
   0&1\end{pmatrix}}\begin{pmatrix}
  1& 0\\
   0&0\end{pmatrix}{\begin{pmatrix}
  x^{-q}& 0\\
   0& 1\end{pmatrix}}, \phi^{\nnn_{E/F}(x^{-1})}\Big)$$
$$= \Pi_{\Lambda, \Sigma}\Big(\begin{pmatrix}
  x^{-1}& 0\\
   0&1\end{pmatrix}\Big)[\Delta(\eta)]
= \Pi_{\Lambda, \Sigma}\Big(\begin{pmatrix}
  x^{-1}& 0\\
   0&1\end{pmatrix}\Big)(f_{\Lambda,\Sigma})
=\Lambda(x^{-1}) f_{\Lambda,\Sigma}
=\lambda(t^{-1})f_{\Lambda,\Sigma}
=\lambda(t^{-1}) \Delta(\eta) \textrm{ for }\nnn_{E/F}(x)=t.$$

By the above (i) and (ii), we obtain that  $\pi_0(h(r))\Delta = \lambda(r)\sigma(r^{-1})\Delta$, $\pi_0(h'(t))\Delta = \lambda(t^{-1})\Delta$. In particular,  $\pi_0\Big(\begin{pmatrix}
 t_1& 0\\
   0&t_2\end{pmatrix}\Big) \Delta = \lambda(t_2^{-1})\sigma(t_1^{-1}) \Delta$. It follows that $\Hom_G\big(\pi_0, \Ind_B^G(\lambda^{-1}\otimes \sigma^{-1})\big) \simeq \Hom_G\big(\pi_0, \Ind_B^G(\sigma^{-1}\otimes \lambda^{-1})\big) \neq 0$.  Since $\dim_{\C} \pi_0=q+1$, surely $\pi_0 \simeq \pi_{\lambda^{-1},\sigma^{-1}}$.

\subsection{The representation $(\pi_0, W[\pi_1])$ IV}
In this subsection, let $\pi_1=\Pi_{\Lambda, \Sigma}$ where $ \Lambda\neq \Sigma\in \Irr(E^{\times})$ and $\Lambda=\Sigma^q, \Sigma=\Lambda^q$.
We start with  recalling some explicit models for certain representations(cf. \cite{Andr}).

\subsubsection{I. Model for $\Pi_{\Lambda,\Sigma}$ }
  By \cite[p.21, Definition 2 ]{Andr}, $\Pi_{\Lambda,\Sigma}$ can be realized in the  vector space $V_1$ spanned  by all the  functions $v:E^2\times E^{\times} \longrightarrow \C$ such that
    \[ \left\{\begin{array}{lr}
    v(a(e_1,e_2); a^{-1}b^{-1}e_3)=\Lambda(a)\Sigma(b) v(e_1,e_2;e_3)& \cdots (\star),  \\    \big(\Pi_{\Lambda,\Sigma}(h)v\big)(e_1,e_2;e_3)=v((e_1,e_2)h; e_3\det(h)^{-1}) & \cdots (\star\star),
     \end{array}\right.\]
for  $ e_1,e_2\in E; a,b, e_3\in E^{\times}; h\in H$.
\subsubsection{ II. Model for $\pi_{\Lambda}$}
 Let $\pi_{\Lambda}$  be  a cuspidal representation of $G$ corresponding to a regular character $\Lambda$ of $E^{\times}$. Invoking  \cite[ p.53, Proposition 4]{Andr}, we know  that $\pi_{\Lambda}$ can be realized in the vector space $\C[X_F]$ as follows:
 \begin{equation}\label{formulaII1}\tag{1}
\pi_{\Lambda}\Big(\begin{pmatrix}
  r& 0\\
   0&r\end{pmatrix}\Big)f=\Lambda(r) f
 \end{equation}
  \begin{equation}\label{formulaII2}\tag{2}
\Big(\pi_{\Lambda}\Big(\begin{pmatrix}
  1& 0\\
   0&t\end{pmatrix}\Big)f\Big)(\psi)=f(\psi^{t^{-1}})
 \end{equation}
   \begin{equation}\label{formulaII3}\tag{3}
\Big(\pi_{\Lambda}\Big(\begin{pmatrix}
  1& s\\
   0&1\end{pmatrix}\Big)f\Big)(\psi)=\psi(s) f(\psi)
 \end{equation}
    \begin{equation}\label{formulaII4}\tag{4}
\Big(\pi_{\Lambda}(\omega)f\Big)(\psi)=-q^{-1} \sum_{y\in E^{\times}} \psi(\Tr_{E/F}(y)) \Lambda(y) f(\psi^{N_{E/F}(y)})
 \end{equation}
where $\psi\in X_F, t, r\in F^{\times}, s\in F$.\\

\subsubsection{ The representation $(\pi_0,  W[\pi_1])$}
 The vector space $W[\pi_1]$ is generated by those functions $F_a: M\times X_F \longrightarrow V_1$ satisfying (\ref{star2}) and $\supp(F_a)$= Orbit$\{\xi_a\}$, $F_a(\xi_a)= v_1 \in V_1^{U_2(E)}$ for any $a\in F^{\times}$. Using the  above model, we choose an element $v_1$ as follows: $v_1: E^2 \times E^{\times} \longrightarrow \C$  satisfies above ($\star$) and
  \begin{itemize}
   \item[(1)] $\supp (v_1)=\cup_{u\in U}$ Orbit $\{(1 , ue_{-1}; 1)\}$,
   \item[(2)] $v_1(1, ue_{-1}; 1)=\Lambda(u)$,
   \end{itemize}
where Orbit$\{(1, ue_{-1}; 1)\}=\{ (a, ue_{-1}a; a^{-1}b^{-1}) \in E^2 \times E^{\times}| a,b \in E^{\times}\}$, and  $e_{-1}$ is a fixed element in $E^{\times}$ such that $\nnn_{E/F}(e_{-1})=-1$.
\begin{lemma}
The above constructed $v_1$ belongs to $V_1^{U_2(E)}$.
\end{lemma}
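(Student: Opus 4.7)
The plan is to reduce to two types of generators of $U_2(E)$ and verify invariance directly on the special vectors that determine $v_1$. Since $v_1$ lies in the space $V_1$ where elements are determined by condition $(\star)$ plus their values on a chosen set of representatives, it suffices to check that, for every $h\in U_2(E)$, the function $\Pi_{\Lambda,\Sigma}(h)v_1$ is supported on the same orbits and that $(\Pi_{\Lambda,\Sigma}(h)v_1)(1,ue_{-1};1)=\Lambda(u)$ for each $u\in U$. By Lemma \ref{thegroupunitary}(i), every $h\in U_2(E)$ factors as $h_1h_2$ with
\[
h_1=\begin{pmatrix} u_0 & 0 \\ 0 & 1 \end{pmatrix},\qquad h_2=\begin{pmatrix} a & b \\ -b^q & a^q \end{pmatrix},
\]
where $u_0\in U$ and $N_{E/F}(a)+N_{E/F}(b)=1$, so it is enough to verify invariance under $h_1$ and $h_2$ separately.

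For $h_1$, I would use $(\star\star)$ to write $(\Pi_{\Lambda,\Sigma}(h_1)v_1)(1,ue_{-1};1)=v_1(u_0,ue_{-1};u_0^{-1})$ and then apply $(\star)$ with $a'=u_0$, $b'=1$ to get $\Lambda(u_0)\,v_1(1,u_0^{-1}ue_{-1};1)=\Lambda(u_0)\Lambda(u_0^{-1}u)=\Lambda(u)$. This part is routine.

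The main obstacle is the invariance under $h_2$. Here $\det(h_2)=N_{E/F}(a)+N_{E/F}(b)=1$, so $(\Pi_{\Lambda,\Sigma}(h_2)v_1)(1,ue_{-1};1)=v_1(x,y;1)$ with $x=a-ue_{-1}b^q$ and $y=b+ue_{-1}a^q$. The whole argument will turn on the algebraic identity
\[
y=ue_{-1}\,x^q,
\]
which follows by direct calculation from $u^{q+1}=1$ and $e_{-1}^{q+1}=-1$. Granting this identity, three things drop out at once: first, $x\neq 0$ (if $x=0$, then $a=ue_{-1}b^q$ forces $N_{E/F}(a)=-N_{E/F}(b)$, contradicting $N_{E/F}(a)+N_{E/F}(b)=1$); second, $(x,y;1)$ lies in the orbit of $(1,(ux^{q-1})e_{-1};1)$ with $ux^{q-1}\in U$ because $N_{E/F}(x)^{q-1}=1$; third, applying $(\star)$ with $a'=x$, $b'=x^{-1}$ and using $\Sigma=\Lambda^q$,
\[
v_1(x,ue_{-1}x^q;1)=\Lambda(x)\Sigma(x^{-1})\Lambda(ux^{q-1})=\Lambda\bigl(x^{1-q}\cdot ux^{q-1}\bigr)=\Lambda(u).
\]
This proves $(\Pi_{\Lambda,\Sigma}(h_2)v_1)(1,ue_{-1};1)=\Lambda(u)$ and also shows that $\Pi_{\Lambda,\Sigma}(h_2)v_1$ is supported where $v_1$ is, completing the verification.

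Combining the two cases, $\Pi_{\Lambda,\Sigma}(h_1h_2)v_1=\Pi_{\Lambda,\Sigma}(h_1)v_1=v_1$ for all $h=h_1h_2\in U_2(E)$, so $v_1\in V_1^{U_2(E)}$. The only real content is the identity $y=ue_{-1}x^q$; once that is in hand, the proof reduces to transporting along $(\star)$.
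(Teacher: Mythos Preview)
Your proposal is correct and follows essentially the same route as the paper: both split $U_2(E)$ into the diagonal generator $\begin{pmatrix} u_0 & 0 \\ 0 & 1\end{pmatrix}$ and the $\SU_2$-generator $\begin{pmatrix} a & b \\ -b^q & a^q\end{pmatrix}$, and both hinge on the identity $b+ue_{-1}a^q = ue_{-1}(a-ue_{-1}b^q)^q$ (your $y=ue_{-1}x^q$). The only cosmetic difference is that you apply $(\star)$ once with $a'=x,\ b'=x^{-1}$, whereas the paper does it in two steps (first $a'=x,\ b'=1$, landing at third coordinate $x$, then using $\Sigma^{-1}(x)$); your observation that $x\neq 0$ and $ux^{q-1}\in U$ makes explicit a point the paper leaves implicit.
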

\begin{proof}
1) For $g=\begin{pmatrix}
  u& 0\\
   0&1\end{pmatrix} \in \U_2(E)$, we have
   $\supp(g\cdot v_1)=\supp (v_1),$
    and
    $g\cdot v_1(1, u_0e_{-1}; 1)=v_1(u, u_0e_1; u^{-1})=\Lambda(u) v_1(1, u_0u^{-1}e_{-1}; 1)=\Lambda(u)\Lambda(u_0u^{-1})=\Lambda(u_0)=v_1(1, u_0e_{-1}; 1);$
    thus $g\cdot v_1=v_1$.\\
2) For $g=\begin{pmatrix}
  a& b\\
   -b^q&a^q\end{pmatrix} \in U_2(E)$, we have
  $ \supp (g\cdot v_1)=\supp(v_1), $
  and
   $g\cdot v_1(1, ue_{-1}; 1)=v_1(a-b^que_{-1}, b+ue_{-1}a^q; 1)=v_1(a-b^que_{-1}, ue_{-1}\big( a-b^que_{-1}\big)^q;1)$
  $ =\Lambda(a-b^que_{-1})v_1(1, ue_{-1}(a-b^que_{-1})^{q-1}; a-b^que_{-1})
   =\Lambda(a-b^que_{-1})\Lambda(u(a-b^que_{-1})^{q-1})\Sigma^{-1}(a-b^que_{-1})$
   $=\Lambda(u)=v_1(1, ue_{-1};1);$
   therefore $g\cdot v_1=v_1$ in this case.
\end{proof}

We define an intertwining operator between $\pi_{\Lambda^{-1}}$ and $\pi_0$ by\\
 $$j: \pi_{\Lambda^{-1}} \longrightarrow W[\pi_1]; f \longmapsto j(f)=\sum_{a\in F^{\times} } f(\phi^a) F_a, \textrm{ i.e. } j(f)(\xi_a)= f(\phi^a) v_1.$$
Claim: $j\big(\pi_{\Lambda^{-1}}(g)f\big) = \pi_0(g) j(f) \textrm{ for } g\in G.$\\
Proof: (1) Let $g=\begin{pmatrix}
  x&y\\
   0&z\end{pmatrix}\in B$.\\
   $$\pi_0\bigg(\begin{pmatrix}
  x&y\\
   0&z\end{pmatrix}\bigg) j(f)(\xi_a)= \sum_{t\in F^{\times}} f(\phi^t) \pi_0\bigg(\begin{pmatrix}
  x&y\\
   0&z\end{pmatrix}\bigg) F_t(\xi_a)$$
  $$=\sum_{t\in F^{\times}} f(\phi^t) \pi_0\bigg( \begin{pmatrix}
  x&0\\
   0&x^{-1}\end{pmatrix} \begin{pmatrix}
  1&0\\
   0&xz\end{pmatrix} \begin{pmatrix}
  1&x^{-1}y\\
   0&1\end{pmatrix}\bigg) F_t(\xi_a)$$
   $$= \sum_{t\in F^{\times}} f(\phi^t) F_t\bigg( \begin{pmatrix}
  x&0\\
   0&x\end{pmatrix}, \phi^{ax^{-1}z^{-1}}\bigg) \phi^a(yz^{-1})$$
$$\stackrel{\nnn_{E/F}(r)=x}{=} \sum_{t\in F^{\times}} f(\phi^t) F_t\bigg( \begin{pmatrix}
  r^{-1}&0\\
   0&r^{-1}\end{pmatrix} \begin{pmatrix}
  1&0\\
   0&1\end{pmatrix} \begin{pmatrix}
  r^{-q}&0\\
   0&r^{-q}\end{pmatrix} \nnn_{E/F}(r^{2}), \phi^{\nnn_{E/F}(r^{-2}) axz^{-1}}\bigg) \phi^a(yz^{-1})$$
$$=\sum_{t\in F^{\times}} f(\phi^t) \pi_1\bigg( \begin{pmatrix}
  r^{-1}&0\\
   0&r^{-1}\end{pmatrix}\bigg) F_t\bigg( \begin{pmatrix}
  1&0\\
   0&1\end{pmatrix}, \phi^{axz^{-1}}\bigg) \phi^a(yz^{-1})$$
$$=f(\phi^{axz^{-1}}) [\pi_1\bigg( \begin{pmatrix}
  r^{-1}&0\\
   0&r^{-1}\end{pmatrix}\bigg) v_1] \phi^a(yz^{-1})$$
$$= f(\phi^{axz^{-1}})\Lambda(r^{-1}) \Sigma(r^{-1})v_1 \phi^a (yz^{-1})=f(\phi^{axz^{-1}}) \Lambda(x^{-1}) \phi^a(yz^{-1})v_1.$$
So $$\pi_0\Big(\begin{pmatrix}
  x&y\\
   0&z\end{pmatrix}\Big)j(f)=\sum_{a\in F^{\times}} f(\phi^{ax^{-1}z}) \Lambda(x^{-1}) \phi^a(yz^{-1})F_a.$$
On the other hand,
$$j(\pi_{\Lambda^{-1}} \Big(\begin{pmatrix}
  x&y\\
   0&z\end{pmatrix}\Big)f)= \sum_{a\in F^{\times}} [\pi_{\Lambda^{-1}} \Big( \begin{pmatrix}
  x&y\\
   0&z\end{pmatrix} \Big)f](\phi^a) F_a$$
$$=\sum_{a\in F^{\times}}[ \pi_{\Lambda^{-1}}\Bigg( \begin{pmatrix}
  x&0\\
   0&x\end{pmatrix}  \begin{pmatrix}
  1&0\\
   0&x^{-1}z\end{pmatrix}  \begin{pmatrix}
  1&x^{-1}y\\
   0&1\end{pmatrix}\Bigg) f](\phi^a) F_a$$
$$= \sum_{a\in F^{\times}} \Lambda(x^{-1}) f(\phi^{axz^{-1}}) \phi^{axz^{-1}}(x^{-1}y) F_a$$
$$=\sum_{a\in F^{\times}} \Lambda(x^{-1}) f(\phi^{axz^{-1}}) \phi^a(yz^{-1}) F_a.$$
(2) Let $g=\omega$.
 $$\Big(\pi_0(\omega) j(f)\Big) (\xi_a)= -q^{-2}  \sum_{n\in M} \phi^a (B(\id_H, n)) j(f) (n, \phi^a)$$
  $$\stackrel{ \textrm{ consider }  \supp\big( j(f)\big)}{=} -q^{-2}  \sum_{n\in M, \det n\neq 0} \phi^a (B(\id_H, n)) j(f) (n, \phi^a)$$
 $$= -q^{-2} |U_2(E)|^{-1} \sum_{h\in H}\phi^{a}\Big(B\big(\id_H,\nnn_{E/F}(\det(h^{-1})) h h^{\star}\big)\Big)j(f)(\nnn_{E/F}(\det(h^{-1}))hh^{\star}, \phi^a)$$
 $$= -q^{-2} |U_2(E)|^{-1} \sum_{h\in H}\phi^{a}\Big(B\big(\id_H,\nnn_{E/F}(\det(h^{-1})) h h^{\star}\big)\Big)\pi_1(h)v_1[j(f)\big(\xi_{a \nnn_{E/F}(\det(h^{-1}))}\big)]$$
 $$= -q^{-2} |U_2(E)|^{-1} \sum_{h\in H}\phi^{a}\Big(B\big(\id_H,\nnn_{E/F}(\det(h^{-1})) h h^{\star}\big)\Big)\pi_1(h) f(\phi^{a\nnn_{E/F}(\det(h^{-1}))}) $$
$$\stackrel{ \textrm{ replace } h \textrm{ by }h^{-1}\det(h)}{=}- q^{-2} |U_2(E)|^{-1} \sum_{h\in H}\phi^{a}\Big(B\big(\id_H, h^{-1} (h^{\star})^{-1}\big)\Big)f(\phi^{a\nnn_{E/F}(\det(h))})\pi_1(h^{-1}\det(h))v_1.$$
 Let $$\kappaup_a= -q^{-2} |U_2(E)|^{-1} \sum_{h\in H}\phi^{a}\Big(B\big(\id_H, h^{-1} (h^{\star})^{-1}\big)\Big)f(\phi^{a\nnn_{E/F}(\det(h))})\Big(\pi_1(h^{-1}\det(h))v_1\Big)(1, e_{-1}; 1)$$
 and
 $$\kappaup_a^s=- q^{-2} |U_2(E)|^{-1} \sum_{h\in H,\nnn_{E/F}(\det(h))=s }\phi^{a}\Big(B\big(\id_H, h^{-1} (h^{\star})^{-1}\big)\Big)f(\phi^{a\nnn_{E/F}(\det(h))})\Big(\pi_1(h^{-1}\det(h))v_1\Big)(1, e_{-1}; 1)
\textrm{  for any } s\in F^{\times}.$$
Then $\kappaup_a=\sum_{s\in F^{\times}}\kappaup_a^s$, and
 $$\kappaup_a^1 = -q^{-2} |U_2(E)|^{-1} \sum_{h\in \mathcal {M}}\phi^{a}\Big(\nnn_{E/F}(\alpha)+\nnn_{E/F}(\beta )+\nnn_{E/F}(\gamma )+\nnn_{E/F}(\delta ) \Big)f(\phi^a) v_1\big(\delta - \gamma e_{-1}, -\beta + \alpha e_{-1}; \det(h^{-1})\big)$$
 \begin{equation}\label{ka1}
 = -q^{-2} |U_2(E)|^{-1} \sum_{h\in \mathcal {M}}\phi^{a}\Big(\nnn_{E/F}(\alpha)+\nnn_{E/F}(\beta )+\nnn_{E/F}(\gamma )+\nnn_{E/F}(\delta ) \Big)f(\phi^a) \Lambda^{q-1}(\delta- \gamma e_{-1})\Lambda^{-q}(\alpha\delta-\beta\gamma)  v_1\big(1, \frac{ -\beta + \alpha e_{-1}}{\delta - \gamma e_{-1}}; 1),
 \end{equation}
 where $\mathcal {M}=\{h=\begin{pmatrix}
  \alpha & \beta \\
   \gamma &\delta \end{pmatrix}\in H| \nnn_{E/F}(\alpha\delta-\beta\gamma)=1; -\beta+\alpha e_{-1}=u_1e_{-1}(\delta-\gamma e_{-1}),\delta-\gamma e_{-1}\neq 0 \textrm{ for  some } u_1\in U\}$;\\
By the equations in $\{ \alpha\delta-\beta\gamma=u_2;  -\beta+\alpha e_{-1}=u_1e_{-1}(\delta-\gamma e_{-1})$ and $ \delta-\gamma e_{-1}=z$ for $u_1, u_2 \in U, z\in E^{\times}\}$, we change the variables $\alpha, \beta, \gamma, \delta$ by $u_1,u_2,z,\gamma$. Note that this is reasonable.

By $-\beta + \alpha e_{-1}=u_1e_{-1}\big( \delta-\gamma e_{-1}\big)$, we get $-\beta e_{-1}^{-1} + u_1 \gamma e_{-1}=u_1 \delta -\alpha.$  Then
$\big(-\beta e_{-1}^{-1} + u_1 \gamma e_{-1}\big) \big( -\beta e_{-1}^{-1} + u_1 \gamma e_{-1}\big)^q= \big( u_1 \delta -\alpha\big) \big( u_1 \delta -\alpha\big)^q.$
By calculation, we have
$\nnn_{E/F}(\alpha) +\nnn_{E/F}(\beta) + \nnn_{E/F}(\gamma) + \nnn_{E/F}(\delta)$
$=\tr_{E/F}\Big( u_1\big( \alpha^q \delta -\gamma \beta^q e_{-1}^{1-q}\big)\Big).$
Set $A=\alpha^q \delta -\gamma \beta^q e_{-1}^{1-q}$. Now we  consider
$$u_1 u_2^{-1} z=u_1 u_2^q z
=u_1 \Big( \alpha^q \delta^{q+1}-\beta^q \gamma^q \delta -\alpha^q \delta^q \gamma e_{-1} + \beta^q \gamma^{q+1} e_{-1}\Big),$$
and also
$$u_1 A z^q=u_1[ \Big( \alpha^q \delta-\gamma\beta^q e_{-1}^{1-q}\Big)\Big( \delta^q -\gamma^q e_{-1}^q\Big)]$$
$$=u_1[\alpha^q \delta^{q+1}- \alpha^q \delta\gamma^q e_{-1}^q-\delta^q \beta^q \gamma e_{-1}^{1-q} + \beta^q \gamma^{q+1}e_{-1}]
= u_1[ u_2^{-1} z + \beta^q \gamma^q \delta +\alpha^q \delta^q \gamma e_{-1} - \alpha^q \delta\gamma^q e_{-1}^q-\delta^q \beta^q \gamma e_{-1}^{1-q}]$$
$$=u_1 u_2^{-1} z + u_1e_{-1}^{-q} (-\beta + \alpha e_{-1})^q \big( \delta^q \gamma e_{-1}-\delta \gamma^q e_{-1}^q\big)=u_1 u_2^{-1} z+ z^q\big( \delta^q \gamma e_{-1}-\delta \gamma^q e_{-1}^q\big).$$
So
$$u_1A= u_1 u_{2}^{-1} z^{1-q} +\Big( \delta^q \gamma e_{-1} -\delta \gamma^q e_{-1}^q\Big).$$
In this way, we obtain
$$\tr_{E/F}(u_1A)=\tr_{E/F}(u_1u_2^{-1}z^{1-q}).$$
Hence
$$(\ref{ka1})=- q^{-2} |U_2(E)|^{-1} \sum_{u_1,u_2\in U, z\in E^{\times},\gamma\in E}\phi^{a}\big(\tr_{E/F}(u_1u_2^{-1}z^{1-q})\big)\Lambda^{-1}(u_1u_2^{-1}z^{1-q})f(\phi^a) $$
$$\stackrel{u_1=x_1^{1-q},u_2= x_2^{q-1}}=- q^{-2} |U_2(E)|^{-1}\frac{1}{(q-1)^2} \sum_{x_1, x_2, z\in E^{\times},\gamma\in E}\phi^{a}\bigg(\tr_{E/F}\Big(\big(x_1x_2z\big)^{1-q}\Big)\bigg)\Lambda^{-1}\bigg(\big(x_1x_2z\big)^{1-q}\bigg)f(\phi^a) $$
$$=-\frac{1}{(q-1)q}\sum_{ z\in E^{\times}}\phi^{a}\bigg(\tr_{E/F}\big(z^{1-q}\big)\bigg)\Lambda^{-1}\bigg(z^{1-q}\bigg)f(\phi^a)$$
$$=-q^{-1}\sum_{y\in E^{\times}, \nnn_{E/F}(y)=1}\phi^a(\tr_{E/F}(y)) \Lambda^{-1}(y) f(\phi^{a}).  $$
Similarly we obtain $$\kappaup_a^s=-q^{-1}\sum_{y\in E^{\times}, \nnn_{E/F}(y)=s}\phi^a(\tr_{E/F}(y)) \Lambda^{-1}(y) f(\phi^{a s}).  $$
Finally
 $$\kappaup_a=-q^{-1} \sum_{y\in E^{\times}} \phi^a(\tr_{E/F}(y)) \Lambda^{-1}(y) f(\phi^{aN_{E/F}(y)}).$$
Since $V_1^{U_2(E)}$ is one-dimensional, we have
 $\big(\pi_0(\omega) j(f)\big)(\xi_a) = j(\pi_{\Lambda^{-1}}(\omega) f) (\xi_a),$ which means $\pi_0(\omega)j(f)=j(\pi_{\Lambda^{-1}}(\omega)f)$. \\
By the above (1), (2), we prove $\pi_0\simeq \pi_{\Lambda^{-1}}$.
\subsection{}
By the above discussion I---IV about the representation $(\pi_0, W[\pi_1])$,  finally we achieve the main theorem in this section:
\begin{theorem}\label{mainth2}
For the representation $(\pi, G\times H, W)$, we have the  following  decomposition:
$$ \pi\simeq \bigoplus_{\sigma\in \Irr(G)} \sigma \otimes \Bc_{E/F}(\sigma)\oplus \bigoplus_{\psi\in \Irr(F^{\times}), \Psi\in \Irr(E^{\times}),\Psi=\psi\circ\nnn_{E/F}} \psi\!\St_G\!\otimes\Psi\!\cdot\!1_H.$$
\end{theorem}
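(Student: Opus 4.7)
The plan is to assemble the four model-by-model computations already carried out in the four preceding subsections on $(\pi_0,W[\pi_1])$ (Cases I through IV) and to read the answer off the base-change dictionary of Theorem~\ref{basechangeGL2}(1). Starting from the decomposition
$$\pi \simeq \bigoplus_{\pi_1 \in \Irr_0(H)} W[\pi_1] \otimes \check{\pi}_1$$
and from Corollary~\ref{theirreduciblerepresentationH}, only four families of $\pi_1 \in \Irr_0(H)$ contribute: $\Psi\cdot 1_H$ and $\Psi\cdot \St_H$ with $\Psi = \Psi^q$; $\Pi_{\Lambda,\Sigma}$ with $\Lambda \neq \Sigma$ both Galois-fixed; and $\Pi_{\Lambda,\Lambda^q}$ with $\Lambda$ regular.

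For each such $\pi_1$, the $G$-module $\pi_0 = W[\pi_1]$ has been identified in the preceding four subsections: $\pi_0 \simeq \psi^{-1}\cdot 1_G \oplus \psi^{-1}\cdot \St_G$ in Case~I (using self-duality of $\Ind_B^G 1_G$); $\pi_0 \simeq \psi^{-1}\cdot \St_G$ in Case~II; $\pi_0 \simeq \pi_{\lambda^{-1},\sigma^{-1}}$ in Case~III with $\Lambda = \lambda \circ \nnn_{E/F}$, $\Sigma = \sigma \circ \nnn_{E/F}$; and $\pi_0 \simeq \pi_{\Lambda^{-1}}$ in Case~IV. I would substitute each into $\pi_0 \otimes \check{\pi}_1$, take contragredients of the $\pi_1$'s via the duality formulas of Section~\ref{Prel}, and reindex $\psi \mapsto \psi^{-1}$, $\Lambda \mapsto \Lambda^{-1}$ in the outer sum so as to rewrite each contribution with positive exponents.

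Next, I would apply Theorem~\ref{basechangeGL2}(1) to recognise the $H$-factor as $\Bc_{E/F}$ of the $G$-factor in almost every contribution: Case~II matches $\psi\cdot \St_G \otimes \Bc_{E/F}(\psi\cdot \St_G)$ via (iii); Case~III matches $\pi_{\chi_1,\chi_2} \otimes \Bc_{E/F}(\pi_{\chi_1,\chi_2})$ via (i); Case~IV matches $\pi_\theta \otimes \Bc_{E/F}(\pi_\theta)$ via (iv); and the $\psi\cdot 1_G \otimes \Psi\cdot 1_H$ piece of Case~I matches $\psi\cdot 1_G \otimes \Bc_{E/F}(\psi\cdot 1_G)$ via (ii). The only contribution that escapes this pattern is the $\psi\cdot \St_G \otimes \Psi\cdot 1_H$ piece of Case~I, since $\Bc_{E/F}(\psi\cdot \St_G) = \Psi\cdot \St_H \neq \Psi\cdot 1_H$; this is precisely the extra summand displayed in the theorem.

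Finally, the equivalences $\pi_{\chi_1,\chi_2} \simeq \pi_{\chi_2,\chi_1}$ and $\pi_\theta \simeq \pi_{\theta^q}$ match, respectively, the symmetry $\Pi_{\Lambda,\Sigma} \simeq \Pi_{\Sigma,\Lambda}$ in Case~III and the $\{\Lambda,\Lambda^q\}$-orbit parametrisation in Case~IV, so that each irreducible $\sigma \in \Irr(G)$ appears in exactly one summand without overcounting. No step here is conceptually hard, since the genuine work has been done in the four case-by-case model analyses; the only obstacle is careful bookkeeping of duals, central-character twists, and the matching of the parametrisation of $\Irr_0(H)$ against that of $\Irr(G)$ through the base-change map.
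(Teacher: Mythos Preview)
Your proposal is correct and follows exactly the route the paper takes: the paper's proof consists of the single sentence ``By the above discussion I---IV about the representation $(\pi_0, W[\pi_1])$, finally we achieve the main theorem,'' and your outline simply makes explicit the bookkeeping (contragredients, reindexing, matching against Theorem~\ref{basechangeGL2}(1)) that this sentence leaves implicit. There is nothing to add.
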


\section{The decomposition of the Weil representation of $\GL_2(K)$}\label{demp3}
\subsection{}\label{section3notation}

In this section,  we use the following notations: $G=\GL_2(K)$,  $ B=\{\begin{pmatrix}
  a& b\\
  0&d
\end{pmatrix} \in G \}
, \ N= \{
 \begin{pmatrix}
  1& b \\
  0&1
\end{pmatrix} \in G \}, \ T=\{\begin{pmatrix}
  a& 0 \\
  0&d
\end{pmatrix} \in G\},  \  Z=\{\begin{pmatrix}
  a& 0 \\
  0&a  \end{pmatrix} \in G \}$; $\Gal(K/F)=\langle \sigma \rangle$;
 \  $u(b)=
 \begin{pmatrix}
  1& b \\
  0&1
\end{pmatrix}$ for $b\in K$,\  $ h(a,d)=
 \begin{pmatrix}
  a& 0 \\
  0&d
\end{pmatrix}$ for $a,d\in K^{\times}$, \ $\omega=
 \begin{pmatrix}
  0& 1 \\
 -1&0
\end{pmatrix}\in G$.
\subsection{}\label{Weildescent1}
We recall the technique of  Weil's Galois descent  to construct a morphism from $G$ to $\GSp_8(F)$.

Let $V_0$ be a vector space over $F$ of dimension $2$, endowed with a symplectic form $\langle, \rangle_{V_0}$. Let $\{e_1, e_2\}$ be a symplectic base of $V_0$. Namely $V=V_0\otimes_F K$ is a symplectic $K$-vector space, endowed with the symplectic form $\langle, \rangle_V$ induced from $V_0$ by  scalar extension. Let us define a $\Gal(K/F)$-action on $V$  by
$$\Gal(K/F)\times K\otimes_F V_0 \longrightarrow K\otimes_F V_0; (\sigma, \sum_i k_i \otimes e_i) \longmapsto \sum_i k_i^{\sigma}\otimes e_i.$$
Let $W= V\otimes_K V\otimes_K V$, and we assign $W$ a symplectic form $\langle, \rangle_W=\langle, \rangle_{V}\otimes \langle, \rangle_V \otimes \langle, \rangle_V$. On $W$, we will consider the twisted Galois action defined by
 $$\Gal(K/F)  \times W \longrightarrow W; (\sigma, w=\sum_{i=1}^n u_i\otimes v_i \otimes w_i ) \longmapsto {}^{\sigma}w=\sum_{i=1}^n w_i^{\sigma}\otimes u_i^{\sigma} \otimes v_i^{\sigma}.$$
 We will let $W_0$ denote the set $\{ w\in W|{}^{\sigma}w=w\}$.  By calculation, each $w_0\in W_0$ may be expressed in the form
 $$w_0= x e_1\otimes e_1 \otimes e_1 + \alpha e_1\otimes e_1 \otimes e_2 + \alpha^{\sigma} e_2 \otimes e_1 \otimes e_1 + \alpha^{\sigma^2} e_1\otimes e_2 \otimes e_1$$ $$ + \beta^{\sigma^2} e_2\otimes e_1 \otimes e_2 + \beta^{\sigma} e_1\otimes e_2\otimes e_2 + \beta e_2 \otimes e_2 \otimes e_1 + y e_2 \otimes e_2\otimes e_2 \textrm{ for } x, y \in F, \alpha, \beta \in K.$$
 Every element $w_0$ of this form is well-defined by its corresponding coefficients.  For simplicity,  we write $w_0=\begin{pmatrix}
  x& \alpha  \\
  \beta &y \end{pmatrix}$ instead of the whole term.  One can check that  the restriction of $\langle, \rangle_W$ to $W_0$ defines  an  $F$-symplectic form, denoted by $\langle, \rangle_{W_0}$. More precisely,
  $$\langle w_0, w_0'\rangle_{W_0}= xy'-x'y -\tr_{K/F}(\alpha \beta') + \tr_{K/F} (\alpha' \beta ) \textrm{ for } w_0= \begin{pmatrix}
  x& \alpha  \\
  \beta &y \end{pmatrix}, w_0'=\begin{pmatrix}
  x'& \alpha' \\
  \beta' &y' \end{pmatrix}.$$
Let $\GSp(W)$ denote the group of  symplectic similitudes  of  $(W, \langle, \rangle_W)$. By  definition, there actually exists a morphism of groups
$$\bigg( \GL(V) \times \GL(V) \times \GL(V)\bigg) \rtimes S_3 \longrightarrow \GSp(W).$$
Here the group $S_3$ acts on $W$ by permutating its three variables. Now we define a twisted Galois action of $\Gal(K/F)$ on $ \GL(V) \times \GL(V) \times \GL(V)$  by
$$\Gal(K/F) \times \bigg(\GL(V) \times \GL(V) \times \GL(V)\bigg) \longrightarrow \GL(V) \times \GL(V) \times \GL(V); h=(g_1, g_2, g_3) \longmapsto {}^{\sigma} h:=(g_3^{\sigma}, g_1^{\sigma}, g_2^{\sigma}).$$
Write
$\overline{\GL(V)}=\{ h \in \GL(V) \times \GL(V) \times \GL(V)| {}^{\sigma}h=h\}.$
 Then there exists an isomorphism of groups
 $\GL(V) \longrightarrow \overline{\GL(V)}; g \longmapsto (g, g^{\sigma}, g^{\sigma^2}).$ If given
 $h\in \GL(V) \times \GL(V) \times \GL(V),  \quad w\in W=V\otimes_K V\otimes_K V,$
  one can verify that ${}^{\sigma}h\cdot {}^{\sigma}w={}^{\sigma}(h\cdot w).$
   So it induces a morphism from $\GL(V)\simeq \overline{\GL(V)}$ to $\GSp(W_0)$. By the fixed basis $\{ e_1,e_2\}$, we obtain a morphism: $ G \stackrel{i}{\longrightarrow} \GSp(W_0)$.

\subsection{}\label{Weildescent2}
We interpret the above construction of the  morphism $G \stackrel{i}{\longrightarrow} \GSp(W_0)$ in terms of the language of algebraic groups.

Let $\textbf{V}$ be the $K$-algebraic vector space associated to $V$. That is to say:
 $$\textbf{V}: \textbf{Alg}_K \longrightarrow \textbf{Vect}_K; R \longmapsto V\otimes_K R,$$ a functor from the category of  unital commutative associative $K$-algebras to the category of $K$-vector spaces. Namely $V\otimes_KR$ inherits the $R$-symplectic structure from $V$. We  define a $\Gal(K/F)$-action on $\textbf{V}$ in the following way:\\
$$\Gal(K/F) \times V \otimes_K R \longrightarrow V\otimes_K R; (\sigma, \sum_{i=1}^n v_i\otimes r_i) \longmapsto \sum_{i=1}^n v_i^{\sigma} \otimes r_i^{\sigma}.$$
Now let $\textbf{W}$ be the $K$-algebraic vector space associated to $W$, and $\textbf{W}_{0}$ the $F$-algebraic vector space associated to $W_0$.  We define a twisted $\Gal(K/F)$-action on $\textbf{W}$  in the following way:\\
$$ \Gal(K/F) \times V\otimes_KV \otimes_K V\otimes_K R \longrightarrow V\otimes_KV\otimes_KV\otimes_KR;$$
$$(\sigma, \sum_{i=1}^n u_i\otimes v_i\otimes w_i\otimes r_i) \longmapsto  \sum_{i=1}^n w_i^{\sigma}\otimes u_i^{\sigma} \otimes v_i^{\sigma}\otimes r_i^{\sigma}.$$
So $\textbf{W}_{0}$ is the  $\Gal(K/F)$-invariant algebraic scheme of $\textbf{W}$ in the following sense:
\begin{itemize}
\item[(1)] $\textbf{W} \simeq \textbf{W}_0 \times_F K.$
\item[(2)] $\textbf{W}(R)^{\Gal(K/F)} \simeq   \textbf{W}_0(R^{\Gal(K/F)})$  for any   $R \in \textbf{Alg}_K$.
\end{itemize}
On the other hand, we also define a twisted Galois action of $\Gal(K/F)$ on $\textbf{GL}_{2/K} \times \textbf{GL}_{2/K}\times \textbf{GL}_{2/K}$ as
$$\Gal(K/F) \times \Big( \GL_2(R) \times \GL_2(R) \times \GL_2(R)\Big) \longrightarrow \GL_2(R) \times \GL_2(R) \times \GL_2(R);$$
$$\big( \sigma, (g_1,g_2,g_3)\big) \longmapsto \big( g_3^{\sigma}, g_2^{\sigma}, g_1^{\sigma}\big).$$
We denote by $\textbf{H}^{\Gal(K/F)}$, the $\Gal(K/F)$-invariant algebraic  scheme of $\textbf{H}=\textbf{GL}_{2/K}\times \textbf{GL}_{2/K} \times \textbf{GL}_{2/K}$. Indeed, by  definition,
$$\textbf{H}^{\Gal(K/F)} \simeq \Res_{K/F}(\textbf{GL}_{2/K}).$$
There exists an action of $\textbf{H}^{\Gal(K/F)}$ on $\textbf{W}_0$, and it preserves the symplectic form up to the similitude factors. Thus we  obtain a morphism of algebraic group schemes:
$$\textbf{i}: \Res_{K/F}(\textbf{GL}_{2/K}) \longrightarrow \textbf{GSp}_{W_0}.$$
\subsection{}
 Let $X_0= \{ w_0=\begin{pmatrix}
  x& \alpha  \\
  0 &0 \end{pmatrix} | w_0\in W_0\}$, $Y_0=\{ w_0= \begin{pmatrix}
  0& 0  \\
  \beta &y \end{pmatrix}| w_0 \in W_0\}$. Then $X_0, Y_0$ are two vector spaces over $F$ and $W_0= X_0 \oplus Y_0$ is a complete polarization of $W_0$.   Via the morphism $i: G \longrightarrow \GSp(W_0)$, it gives rise to a $G$-action on $W_0$ by  the following formulas:
  \begin{itemize}
\item[] For $g=\begin{pmatrix}
  a& b \\
  c &d \end{pmatrix} \in G$, $ w_0= \begin{pmatrix}
  x& \alpha \\
  \beta &y \end{pmatrix}$,  write  $g\cdot w_0 =\begin{pmatrix}
  x'& \alpha' \\
  \beta' &y' \end{pmatrix}$. Then
\item[] $x'= \nnn_{K/F}(a) x + \tr_{K/F}(aa^{\sigma} b^{\sigma^2}\alpha) + \tr_{K/F}(bb^{\sigma}a^{\sigma^2}\beta)+ \nnn_{K/F}(b)y$;
\item[] $\alpha'=aa^{\sigma}c^{\sigma^2}x+(aa^{\sigma} d^{\sigma^2}\alpha+ba^{\sigma} c^{\sigma^2}\alpha^{\sigma} + ab^{\sigma} c^{\sigma^2}\alpha^{\sigma^2}) + (bb^{\sigma} c^{\sigma^2}\beta+ ab^{\sigma} d^{\sigma^2}\beta^{\sigma} + ba^{\sigma} d^{\sigma^2}\beta^{\sigma^2}) +bb^{\sigma} d^{\sigma^2}y$;
\item[] $\beta'=dd^{\sigma} b^{\sigma^2}y + (dd^{\sigma} a^{\sigma^2}\beta+cd^{\sigma} b^{\sigma^2}\beta^{\sigma}+dc^{\sigma} b^{\sigma^2}\beta^{\sigma^2})+(cc^{\sigma} b^{\sigma^2}\alpha+ dc^{\sigma} a^{\sigma^2}\alpha^{\sigma} + cd^{\sigma} a^{\sigma^2}\alpha^{\sigma^2}) + cc^{\sigma}a^{\sigma^2}x$;
\item[] $y'=\nnn_{K/F}(d)y+ \tr_{K/F}(dd^{\sigma} c^{\sigma^2}\beta)+ \tr_{K/F}(cc^{\sigma} d^{\sigma^2}\alpha)+ \nnn_{K/F}(c)x$.
\end{itemize}
We write each element $h\in \GSp(W_0)$ in the form of $h=\begin{pmatrix}
  a& b\\
  c &d \end{pmatrix}$ with $a\in \End_F(X_0), b\in \Hom_F(Y_0, X_0), c\in \Hom_F(X_0, Y_0), d\in \End_F(Y_0)$.
\begin{corollary}\label{demp3cor1}
Through the map $i: G \longrightarrow \GSp(W_0)$, the actions of $u(b), h(a,d), \omega$ on $W_0$ are described  as follows:
\begin{itemize}
\item[(1)]$i(u(b))= \begin{pmatrix}
  m& n \\
  0&m^{\vee}
\end{pmatrix}$, where  $m \begin{pmatrix}
 x& \alpha \\
  0&0
\end{pmatrix}$=$\begin{pmatrix}
  x+\tr_{K/F}(b^{\sigma^2}\alpha)& \alpha  \\
  0&0
\end{pmatrix},$\\ $n \begin{pmatrix}
 0& 0\\
  \beta &y
\end{pmatrix}= \begin{pmatrix}
  \tr_{K/F}(bb^{\sigma}\beta)+ \nnn_{K/F}(b)y & b^{\sigma}\beta^{\sigma}+ b\beta^{\sigma^2} + bb^{\sigma}y \\
  0&0
\end{pmatrix},$ $m^{\vee} \begin{pmatrix}
 0& 0 \\
  \beta &y
\end{pmatrix}= \begin{pmatrix}
  0& 0 \\
  \beta+ b^{\sigma^2}y & y
\end{pmatrix};$
\item[(2)] $i(h(a,d))=  \begin{pmatrix}
  m& 0 \\
  0& n \end{pmatrix}$, where $m\begin{pmatrix}
  x&\alpha\\
  0 &0
\end{pmatrix}= \begin{pmatrix}
  \nnn_{K/F}(a)x &aa^{\sigma}d^{\sigma^2} \alpha \\
  0& 0
\end{pmatrix}$, $n\begin{pmatrix}
  0& 0 \\
  \beta& y
\end{pmatrix}= \begin{pmatrix}
  0& 0 \\
  dd^{\sigma}a^{\sigma^2}\beta  & \nnn_{K/F}(d)y
\end{pmatrix}$;
\item[(3)] $i(\omega)= \begin{pmatrix}
  0&u\\
  v & 0
\end{pmatrix}$, where $u\begin{pmatrix}
  0& 0 \\
  \beta & y
\end{pmatrix}= \begin{pmatrix}
  y& -\beta\\
  0 &0
\end{pmatrix}$, $v\begin{pmatrix}
  x& \alpha\\
  0 & 0
\end{pmatrix}= \begin{pmatrix}
  0& 0 \\
  \alpha & -x
\end{pmatrix}.$
\end{itemize}
\end{corollary}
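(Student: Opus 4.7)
The plan is a direct verification: the general action of $g=\begin{pmatrix} a&b\\ c&d\end{pmatrix}\in G$ on an element $w_0=\begin{pmatrix} x&\alpha\\ \beta&y\end{pmatrix}\in W_0$ has already been written out entry-by-entry in the paragraph preceding the statement, so the task is simply to specialize those four formulas for $x',\alpha',\beta',y'$ to the three chosen generators and then repackage the result in block form with respect to the polarization $W_0=X_0\oplus Y_0$.

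For $u(b)$, I would set $a=d=1$ and $c=0$ in the master formulas. Because $c=0$, every term in the expression for $\beta'$ and $y'$ that involves $\alpha$ or $x$ drops out, and the quadratic contributions to $x'$ collapse to exactly $\tr_{K/F}(b^{\sigma^2}\alpha)+\tr_{K/F}(bb^{\sigma}\beta)+\nnn_{K/F}(b)y$. This is precisely the content of part (1): the block $c$-position is $0$, the diagonal blocks $m$ and $m^\vee$ are shears determined by $b^{\sigma^2}$, and the off-diagonal block $n\colon Y_0\to X_0$ collects all the cross-terms coming from the $bb^\sigma$, $b^\sigma$, and $\nnn_{K/F}(b)$ contributions. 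The only point requiring care is sorting which Galois conjugate of $b$ multiplies which Galois conjugate of $\beta$; the formula $\alpha'=\alpha+b^{\sigma}\beta^{\sigma}+b\beta^{\sigma^2}+bb^{\sigma}y$ must match the stated $n$.

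For $h(a,d)$, setting $b=c=0$ annihilates every term in the master formulas except the first summand in each, yielding the diagonal entries $\nnn_{K/F}(a)x$, $aa^{\sigma}d^{\sigma^2}\alpha$, $dd^{\sigma}a^{\sigma^2}\beta$, $\nnn_{K/F}(d)y$, which is part (2). For $\omega$, I would set $a=d=0$, $b=1$, $c=-1$: then $\nnn_{K/F}(-1)=-1$ since $[K:F]=3$, and only the terms $bb^{\sigma}c^{\sigma^2}\beta=-\beta$, $cc^{\sigma}b^{\sigma^2}\alpha=\alpha$, $\nnn_{K/F}(b)y=y$, $\nnn_{K/F}(c)x=-x$ survive; this gives the antidiagonal block structure with $u(\beta,y)=(y,-\beta)$ and $v(x,\alpha)=(\alpha,-x)$ stated in (3).

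The computation is mechanical rather than obstructive; the only genuine bookkeeping point is keeping the three Galois conjugates $\sigma^0,\sigma,\sigma^2$ in the right slots, since the twisted Galois action from Section 4.2 introduces a cyclic shift of the three tensor factors. I would double-check this by verifying on each generator that the alleged block matrix preserves the symplectic form $\langle w_0,w_0'\rangle_{W_0}=xy'-x'y-\tr_{K/F}(\alpha\beta')+\tr_{K/F}(\alpha'\beta)$ up to an explicit similitude factor ($1$ for $u(b)$, $\nnn_{K/F}(ad)$ for $h(a,d)$, and $-1$ for $\omega$), which provides an independent consistency check on the signs and Galois exponents.
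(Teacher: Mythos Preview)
Your approach is correct and is exactly what the paper intends: the corollary is stated without proof because it is a direct specialization of the entry-by-entry formulas for $x',\alpha',\beta',y'$ given immediately before it, and your plan of plugging in $(a,b,c,d)=(1,b,0,1)$, $(a,0,0,d)$, $(0,1,-1,0)$ is the right one.

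One small slip in your consistency check: the similitude factor of $i(\omega)$ is $\nnn_{K/F}(\det\omega)=\nnn_{K/F}(1)=1$, not $-1$. Indeed, with $i(\omega)(x,\alpha,\beta,y)=(y,-\beta,\alpha,-x)$ one computes
\[
\langle i(\omega)w_0,\,i(\omega)w_0'\rangle_{W_0}
= y(-x')-y'(-x)-\tr_{K/F}\big((-\beta)\alpha'\big)+\tr_{K/F}\big((-\beta')\alpha\big)
= xy'-x'y-\tr_{K/F}(\alpha\beta')+\tr_{K/F}(\alpha'\beta),
\]
which equals $\langle w_0,w_0'\rangle_{W_0}$. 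This does not affect the argument, only the side check.
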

Let $(\rho, V)$ be the Weil representation of the  symplectic  similitude group $\GSp(W_0)$. Via the map $i$, it gives rise to a representation $(\pi,  V)$ of $G$ which  can be realized in the vector space $V= \C[Y_0\times X_F]$ of complex functions on $Y_0\times X_F$.
\begin{proposition}\label{demp3pro2}
For the representation $(\pi, G, \C[Y_0\times X_F])$, the action is determined by the following formulas:
\begin{itemize}
\item[(1)] $[\pi(u(b))F]\bigg(\begin{pmatrix}
  0& 0 \\
 \beta & y
\end{pmatrix},\psi\bigg)=\psi\Big(\tr_{K/F} (bb^{\sigma}\beta y)- \nnn_{K/F}(b)y^2- \tr_{K/F}\big(b\beta\beta^{\sigma^2}\big)\Big) F\bigg( \begin{pmatrix}
  0& 0 \\
 \beta - b^{\sigma^2}y & y
\end{pmatrix}, \psi\bigg)$;\\
\item[(2)]  $[\pi(h(a,d))F]\bigg(\begin{pmatrix}
  0& 0 \\
 \beta & y
\end{pmatrix},\psi\bigg)= \chi_q^{+} (\nnn_{K/F}(ad)) F \bigg(\begin{pmatrix}
  0& 0 \\
 \frac{\nnn_{K/F}(ad)}{dd^{\sigma}a^{\sigma^2}}\beta & \nnn_{K/F}(a)y
\end{pmatrix},\psi^{\nnn_{K/F}(ad)^{-1}}\bigg)$;\\
\item[(3)] $[\pi(\omega) F]\bigg(\begin{pmatrix}
  0& 0 \\
 \beta & y
\end{pmatrix},\psi\bigg)= q^{-2} \sum_{
 \beta' \in K,  y'\in F} F\bigg(\begin{pmatrix}
  0& 0 \\
 \beta' & y'
\end{pmatrix},\psi\bigg)\psi(yy'+ \tr_{K/F}(\beta\beta'))$.
\end{itemize}
\end{proposition}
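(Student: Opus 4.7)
The plan is to derive the three formulas by a direct application of the general Weil representation formulas (\ref{weilGSp1})--(\ref{weilGSp4}), once the images $i(g)$ in $\GSp(W_0)$ are expressed in block form via Corollary \ref{demp3cor1}. The polarisation $W_0 = X_0 \oplus Y_0$ fixes $n=4$ and pins down the Schr\"odinger realisation on $\C[Y_0 \times X_F]$, so the work is entirely computational.

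I would address the three cases in increasing order of difficulty. For formula (2): $i(h(a,d))$ is block-diagonal with similitude $\lambda = \nnn_{K/F}(ad)$, so it factors in $\GSp(W_0)$ as $h(m)\,h'(\lambda)$, where $m$ is the $X_0$-block; then (\ref{weilGSp1}) and (\ref{weilGSp4}) give the answer, with $\det_{X_0} m = \nnn_{K/F}(a)\cdot \nnn_{K/F}(aa^{\sigma}d^{\sigma^2}) = \nnn_{K/F}(a)^3\nnn_{K/F}(d)$, whose Legendre symbol collapses to $\chi_q^+(\nnn_{K/F}(ad))$ because $\chi_q^+$ is quadratic. For formula (1): since $i(u(b))$ lies in the Siegel parabolic with similitude $1$, I would factor it as $h(m)\,u(m^{-1}n)$ with $m$ unipotent (so the $\chi_q^+$-factor is trivial), then apply (\ref{weilGSp1}) and (\ref{weilGSp2}). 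For formula (3): since $i(\omega)$ swaps the Lagrangians like the standard $\omega_{W_0}$, I would write $i(\omega) = \omega_{W_0}\,M$ with $M$ in the Siegel parabolic and apply (\ref{weilGSp3}), noting that $\gamma(\psi^{-1/2})^{-4} = q^{-2}$ because $\gamma(\psi)^2 = \chi_q^+(-1)\,q$ and squaring absorbs the sign.

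The main obstacle is the pairing calculation in (1). After the factorisation, one must simplify $\tfrac12\langle m^{-1}n\,\xi',\,\xi'\rangle_{W_0}$ at the vector $\xi' = m^{\vee-1}\xi = (\beta - b^{\sigma^2}y,\,y)$ using the explicit pairing
\[
\langle w_0,w_0'\rangle_{W_0} = xy'-x'y-\tr_{K/F}(\alpha\beta')+\tr_{K/F}(\alpha'\beta),
\]
together with Galois-invariance identities of the form $\tr_{K/F}(b^{\sigma}\beta^{\sigma}) = \tr_{K/F}(b\beta)$ and the computation $\tr_{K/F}(\nnn_{K/F}(b)) = 3\nnn_{K/F}(b)$. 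A number of terms cancel after expanding the substitution $\beta' = \beta - b^{\sigma^2}y$, and the remaining three should telescope into $\tr_{K/F}(bb^{\sigma}\beta y) - \nnn_{K/F}(b)y^2 - \tr_{K/F}(b\beta\beta^{\sigma^2})$. For (3), the analogous pairing $\langle z, i(\omega)^{-1}y\rangle_{W_0}$ is expanded using $i(\omega)^{-1} = -i(\omega)$ and the explicit $u,v$ to yield the exponent $yy'+\tr_{K/F}(\beta\beta')$. Beyond this sign and trace bookkeeping, no conceptual difficulty is anticipated; the steps for (2) and (3) are shorter manipulations of the same type.
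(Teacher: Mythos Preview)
Your proposal is correct and matches the paper's approach in Appendix~1 essentially line for line: parts (1) and (2) are proved there exactly via the Siegel--parabolic factorisations $i(u(b)) = h(m)\,u(m^{-1}n)$ and $i(h(a,d)) = h(m)\,h'(\nnn_{K/F}(ad))$ that you describe, with the same pairing expansion for (1).

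For (3) there is one practical difference worth noting. Rather than factor $i(\omega) = \omega_{W_0}\,M$, the paper first observes that the natural coordinates $(x,\alpha;\beta,y)$ on $W_0$ are \emph{not} symplectic (the Gram matrix of the obvious basis $(m_i;n_j)$ is $\begin{smallmatrix}0&A\\-A&0\end{smallmatrix}$ with $A$ a nontrivial symmetric matrix), so it passes to a genuine symplectic basis via $A = {}^tP_1P_1$; in that basis $i(\omega)$ becomes exactly $\omega_{\GSp(W_0)}^{-1}$, with no residual Levi factor $M$ to track. Your factorisation route is equally valid, but to carry it out you would still have to fix such a symplectic basis before $\omega_{W_0}$ and $M$ are even defined, so in practice the two computations converge. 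The Gauss-sum reduction $\gamma(\psi^{-1/2})^{-4} = q^{-2}$ you cite is exactly what the paper uses.
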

\begin{proof}
See Appendix 1.
\end{proof}

\subsection{}\label{demp3ps}
The whole goal of this section is to determine the different isotypic components of $\pi$. We first consider the principal series representations.

Let $\alpha, \beta \in \Irr(K^{\times})$. To determine the principal series components of $\pi$, it involves to calculate the dimension of the vector space $\Hom_G(V, \Ind_B^G(\alpha\otimes  \beta))$.   Applying  Frobenius reciprocity, we see
$$\Hom_G(V, \Ind_B^G(\alpha\otimes  \beta))\simeq \Hom_T(V_N, \alpha\otimes \beta) \simeq \Hom_T(V^N, \alpha\otimes \beta).$$ Therefore we shall first describe the vector space $V^N$, and then consider the $T$-action on it. Once  we regard the action of $N$ on the vector space $V$, as described in Proposition \ref{demp3pro2} (1), we should consider the following action:
$$N \times (Y_0 \times X_F) \longrightarrow Y_0 \times X_F; \Bigg( \begin{pmatrix}
  1& b \\
 0 & 1
\end{pmatrix},  \bigg( \begin{pmatrix}
  0& 0 \\
 \beta & y
\end{pmatrix}, \psi\bigg)\Bigg) \longmapsto \bigg( \begin{pmatrix}
  0& 0 \\
 \beta-b^{\sigma^2}y & y
\end{pmatrix}, \psi\bigg).$$
The orbits of this action are following:
\begin{itemize}
\item[(i)] Orbit$\{ \xi_{(\beta,0;\psi)}\}$, where $\xi_{(\beta,0;\psi)}= \bigg(\begin{pmatrix}
  0& 0 \\
 \beta & 0
\end{pmatrix}, \psi\bigg)$ for any $\beta\in K, \psi\in X_F$;
\item[(ii)] Orbit$\{\eta_{(0,y;\psi)}\}$, where $\eta_{(0,y;\psi)}=\bigg( \begin{pmatrix}
  0& 0 \\
 0 & y
\end{pmatrix},\psi\bigg)$ for  any  $y\in F^{\times}$,   $\psi\in X_F$.
\end{itemize}
The stabilizer of the chosen  element in  each orbit is  described as follows:
 $$\Stab_N( \xi_{(\beta,0;\psi)})=N \quad \textrm{  and } \quad  \Stab_N(\eta_{(0,y;\psi)})=1_N.$$
A   function  $F$   belongs to $V^N$ if and only if it satisfies the equality:
\begin{equation}\label{demp3eq1}
\psi\big( \tr_{K/F}(bb^{\sigma}\beta y)- \nnn_{K/F}(b)y^2- \tr_{K/F}(b\beta\beta^{\sigma^2})\big) F \bigg(\begin{pmatrix}
  0& 0 \\
 \beta-b^{\sigma^2}y & y
 \end{pmatrix}, \psi\bigg)= F\bigg( \begin{pmatrix}
  0& 0 \\
 \beta & y
 \end{pmatrix}, \psi\bigg)
\end{equation}
for any $b \in K$.
\begin{proposition}\label{demp3pro3}
(1) The vector space $V^N$ is generated by the following functions:
\begin{itemize}
\item[(i)] $F_{(0,0;\psi)}, \quad$ where $\supp(F_{(0,0;\psi)} )=$ Orbit $\{\xi_{(0,0;\psi)} \}$, $F_{(0,0,\psi)}(\xi_{(0,0;\psi)})=1$ and it satisfies the equation (\ref{demp3eq1}) for any $\psi \in X_F$;
\item[(ii)] $G_{(0,y;\psi)}, \quad$ where $\supp(G_{(0,y;\psi)})=$ Orbit $\{\eta_{(0,y;\psi)}\}$, $G_{(0,y;\psi)}(\eta_{(0,y;\psi)})=1$ and it satisfies the equation (\ref{demp3eq1}) for any $y\in F^{\times}$, any $\psi \in X_F$.
\end{itemize}
(2) Let $t=h(a,d) \in T$. Then the action of $t$ on the vector space $V^N$ is  given as follows:
\begin{itemize}
\item[(i)] $\pi(t) F_{(0,0;\psi)} = \chi_q^{+}(\nnn_{K/F}(ad)) F_{(0,0;\psi^{\nnn_{K/F}(ad)})}$;
\item[(ii)] $\pi(t) G_{(0,y;\psi)} = \chi_q^{+}(\nnn_{K/F}(ad)) G_{(0,\frac{1}{\nnn_{K/F}(a)}y;\psi^{\nnn_{K/F}(ad)})}$.
\end{itemize}
\end{proposition}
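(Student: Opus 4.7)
The plan is to reduce both assertions to a direct orbit analysis of the $N$-action on $Y_0\times X_F$, followed by bookkeeping based on the explicit formulas of Proposition~\ref{demp3pro2}. Stripping the scalar phase off formula~(1) of that proposition, the underlying $N$-action is $u(b)\cdot(\beta,y;\psi) = (\beta - b^{\sigma^2}y,\,y;\,\psi)$, so $y$ and $\psi$ are invariants while $\beta$ traverses the additive coset $\beta + Ky$. The orbits therefore split into two types: singleton orbits $\{\xi_{(\beta,0;\psi)}\}$ when $y = 0$, and full orbits $\{(\beta',y;\psi) : \beta' \in K\}$ with trivial stabilizer when $y \neq 0$.

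For part~(1), I would translate $N$-invariance of $F$ into the pointwise identity~(\ref{demp3eq1}) and analyse each orbit type separately. On a $y=0$ orbit the identity reduces to the requirement that $b \mapsto \psi(-\tr_{K/F}(b\beta\beta^{\sigma^2}))$ be identically~$1$ on $K$; non-degeneracy of the trace pairing forces $\beta\beta^{\sigma^2} = 0$, and since $K$ is a field this gives $\beta = 0$. Thus any $N$-invariant supported in the $y=0$ locus must live on the single orbit $\{\xi_{(0,0;\psi)}\}$, and this orbit contributes exactly the basis vector $F_{(0,0;\psi)}$. On a $y \neq 0$ orbit the stabilizer is trivial, so one may freely prescribe the value at $\eta_{(0,y;\psi)}$ and propagate it along the orbit via~(\ref{demp3eq1}); this yields a one-dimensional invariant subspace and hence the generator $G_{(0,y;\psi)}$.

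For part~(2), I would apply formula~(2) of Proposition~\ref{demp3pro2} directly to each generator. Applied to $F_{(0,0;\psi)}$, whose support is the single point $\xi_{(0,0;\psi)}$, the right-hand side is nonzero only where the pulled-back argument equals $\xi_{(0,0;\psi)}$; this forces $\beta = 0$, $y = 0$, and the evaluating character $\psi'$ to satisfy $(\psi')^{\nnn_{K/F}(ad)^{-1}} = \psi$, yielding formula~(i). The parallel bookkeeping for $G_{(0,y;\psi)}$, whose support is the larger orbit $\{(\beta',y;\psi):\beta'\in K\}$, forces $\nnn_{K/F}(a)y' = y$ and $(\psi')^{\nnn_{K/F}(ad)^{-1}} = \psi$, yielding formula~(ii).

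The main subtle point hides in part~(1) for orbits with $y \neq 0$: propagating a value from $\eta_{(0,y;\psi)}$ to the rest of its orbit via~(\ref{demp3eq1}) is well-defined precisely because the scalar phase appearing in Proposition~\ref{demp3pro2}(1) is a $1$-cocycle on~$N$. This is automatic from $\pi(u(b+b')) = \pi(u(b))\pi(u(b'))$, but if a direct verification is desired it reduces to a short expansion using additivity of $\tr_{K/F}$ and the identity $(b+b')(b+b')^{\sigma} = bb^{\sigma} + b'b^{\sigma} + bb'^{\sigma} + b'b'^{\sigma}$, after which the claim follows cleanly.
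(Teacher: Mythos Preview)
Your proposal is correct and follows essentially the same approach as the paper's proof: orbit analysis of the $N$-action on $Y_0\times X_F$, checking for each orbit representative whether the phase in~(\ref{demp3eq1}) is trivial on the stabilizer, and then reading part~(2) off directly from Proposition~\ref{demp3pro2}(2). You have simply supplied more of the computational detail (the $\beta\beta^{\sigma^2}=0$ step via non-degeneracy of the trace pairing, and the cocycle remark ensuring well-definedness of $G_{(0,y;\psi)}$) that the paper leaves implicit under ``after checking each such point'' and ``it is straightforward.''
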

\begin{proof}
1) Every element $F$ in $V^N$, that satisfies the equation (\ref{demp3eq1}), is completely determined by its values at the points in $\{\xi_{(\beta,0;\psi)},\eta_{(0,y;\psi)}\}$. Let $\delta$ be one point among them. Then $F(\delta)$ can be nonzero if and only if the coefficient on the left-hand side of the equation (\ref{demp3eq1}) is trivial over the stabilizer of $\delta$, After checking each such point, we obtain the result.\\
2) It is straightforward.
\end{proof}
Let $\Phi$ be an element  in $\Hom_T(V^N, \alpha \otimes\beta)$. Then it is determined by the following two  equations:
\begin{itemize}
\item[(1)] $\chi_q^{+}(\nnn_{K/F}(ad)) \Phi( F_{(0,0; \psi^{\nnn_{K/F}(ad)})})= \alpha(a) \beta(b) \Phi(F_{(0,0; \psi)}), \qquad    a, d \in K^{\times}$.
\item[(2)] $\chi_q^{+}(\nnn_{K/F}(ad)) \Phi( G_{(0,\frac{1}{\nnn_{E/F}(a)}y; \psi^{\nnn_{K/F}(ad)})})= \alpha(a) \beta(b) \Phi(G_{(0,y; \psi)}), \qquad  a, d \in K^{\times}. $
\end{itemize}
Now let us  define a $T$-action on the vector space $V^N$:
\[t\cdot  F_{(0,0; \psi)} :=  F_{(0,0; \psi^{\nnn_{K/F}(ad)})} \quad \textrm{ and } \quad t\cdot G_{(0,y; \psi)}= G_{(0,\frac{1}{\nnn_{E/F}(a)}y; \psi^{\nnn_{E/F}(ad)})}, \qquad t=\begin{pmatrix}
  a& 0 \\
 0 & d
 \end{pmatrix}.\]
For such action, there are two kinds of orbits:
\[ (i)\  \Orbit \{F_{(0,0; \phi)}\} \quad \textrm{ and }  \quad(ii)\  \Orbit \{G_{(0,1; \phi)}\}, \qquad\textrm{  for the fixed } \phi\in X_F.\]
The stabilizer of the representative element in  each orbit has the following form:
\begin{itemize}
\item[(i)] $\stab_T(F_{(0,0; \phi)})=\{h(a,d)\in T| \nnn_{K/F}(ad)=1\}$;
\item[(ii)]   $\stab_T(G_{(0,1; \phi)})=\{h(a,d)\in T| \nnn_{K/F}(a)=\nnn_{K/F}(d)=1\}$.
\end{itemize}
Now we present one   statement about the principal series components of the representation $\pi$:
\begin{proposition}\label{demp3pro4}
Let $\alpha, \beta\in \Irr(K^{\times})$.
\begin{itemize}
\item[(1)]  If $\alpha=\chi_1\circ \nnn_{K/F}, \beta=\chi_2 \circ \nnn_{K/F}$ for some characters $\chi_1\neq \chi_2 \in \Irr(F^{\times})$, then $\dim_{\C}\Hom_G( V, \Ind_B^G(\alpha\otimes  \beta))=1$.
\item[(2)]   If $\alpha=\beta=\chi\circ \nnn_{K/F}$ for  a character $\chi\in \Irr(F^{\times})$, then $\dim_{\C} \Hom_G\big(V, \Ind_B^G (\alpha \cdot 1_B)\big)=2$.
\end{itemize}
 For the other kind of $\alpha,\beta \in \Irr(K^{\times})$, $\Hom_{G}(V, \Ind_B^G(\alpha\otimes  \beta))=0$.
\end{proposition}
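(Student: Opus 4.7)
The plan is to apply Frobenius reciprocity twice: first from $G$ down to $T$, and then from $T$ down to the stabilizers of the $T$-orbits in $V^N$. By Frobenius reciprocity and semisimplicity,
$$\Hom_G(V,\Ind_B^G(\alpha\otimes\beta)) \simeq \Hom_T(V^N,\alpha\otimes\beta),$$
so the problem reduces to understanding $V^N$ as a $T$-module. This is precisely the content of Proposition \ref{demp3pro3}: $V^N$ splits canonically as a direct sum of two $T$-stable subspaces, namely $V_1^N$ spanned by the functions $F_{(0,0;\psi)}$ for $\psi\in X_F$, and $V_2^N$ spanned by the functions $G_{(0,y;\psi)}$ for $y\in F^\times$ and $\psi\in X_F$, with explicit $T$-action.

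Next I identify each $V_i^N$ as an induced $T$-module. The formulas of Proposition \ref{demp3pro3}(2), together with the surjectivity of $\nnn_{K/F}\colon K^\times\to F^\times$, show that $T$ acts transitively on each of these two sets of basis vectors, with respective stabilizers
$$T_1 = \{h(a,d)\in T : \nnn_{K/F}(ad)=1\} \qquad\textrm{and}\qquad T_2 = \{h(a,d)\in T : \nnn_{K/F}(a)=\nnn_{K/F}(d)=1\}$$
of $F_{(0,0;\phi)}$ and $G_{(0,1;\phi)}$. The scalar factor $\chi_q^{+}(\nnn_{K/F}(ad))$ occurring in the action is the character $\epsilon := \mu\otimes\mu$ of $T$, where $\mu = \chi_q^{+}\circ\nnn_{K/F}$. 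Since $\mu$ is trivial on the norm-one subgroup $U\subset K^\times$, the restriction $\epsilon|_{T_i}$ is trivial for both $i=1,2$, and hence $V_i^N \simeq \Ind_{T_i}^{T} 1$ as $T$-modules.

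A second application of Frobenius reciprocity then yields $\Hom_T(V_i^N,\alpha\otimes\beta) \simeq \Hom_{T_i}(1,(\alpha\otimes\beta)|_{T_i})$, so it remains to decide when the character $\alpha\otimes\beta$ restricts trivially to each $T_i$. Triviality on $T_2$ is equivalent to $\alpha|_U=\beta|_U=1$, i.e.\ to $\alpha=\chi_1\circ\nnn_{K/F}$ and $\beta=\chi_2\circ\nnn_{K/F}$ for some $\chi_1,\chi_2\in\Irr(F^\times)$, with no further relation. Triviality on $T_1$ is strictly stronger: setting $a=1$ or $d=1$ still forces both $\alpha$ and $\beta$ to factor through the norm, and for general $(a,d)\in T_1$ it imposes the additional relation $\chi_1(u)\chi_2(u^{-1})=1$ for all $u\in F^\times$, hence $\chi_1=\chi_2$. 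Adding the two contributions gives dimension $2$ when $\alpha=\beta$ factors through the norm, $1$ when $\alpha\neq\beta$ both factor through the norm, and $0$ in every other case, matching the statement. The only delicate point in the whole argument is the observation that the Legendre-symbol twist $\epsilon$ vanishes on both $T_1$ and $T_2$; once this is noted, the remainder is standard Mackey theory combined with the surjectivity of $\nnn_{K/F}$.
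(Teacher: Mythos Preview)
Your proof is correct and follows essentially the same line as the paper's: both reduce via Frobenius reciprocity to $\Hom_T(V^N,\alpha\otimes\beta)$, split $V^N$ into the two $T$-orbits generated by $F_{(0,0;\phi)}$ and $G_{(0,1;\phi)}$ with stabilizers $T_1$ and $T_2$ as in Proposition \ref{demp3pro3}, and then check when $\alpha\otimes\beta$ is trivial on each stabilizer. Your only cosmetic difference is that you recast this orbit analysis as the identification $V_i^N\simeq \Ind_{T_i}^T 1$ (using that the Legendre twist $\chi_q^{+}\circ\nnn_{K/F}$ is trivial on $T_i$) and then apply Frobenius reciprocity a second time, whereas the paper argues directly that a $T$-equivariant functional is determined by its values at the two orbit representatives subject to the same triviality conditions.
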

\begin{proof}
 By Frobenius reciprocity, we see $\Hom_{G}(V, \Ind_B^G(\alpha\otimes  \beta)) \simeq \Hom_T(V^N, \alpha\otimes \beta)$. Let $\Phi \in \Hom_T(V^N, \alpha\otimes \beta)$.  The function $\Phi$ is completely determined by its values at the points $F_{(0,0; \phi)}$ and $ G_{(0,1; \phi)}$. The value $\Phi(F_{(0,0; \phi)})$ can be any complex number  if and only if $\alpha\otimes \beta(t)=1$ for $t=h(a,d) \in \stab_T(F_{(0,0; \phi)})$,  which is equivalent to $\alpha=\beta=\chi\circ \nnn_{K/F}$ for some character $\chi\in \Irr(F^{\times})$. Similarly the value $\Phi(G_{(0,1; \phi)})$ can be any complex number if and only if $\alpha=\chi_1\circ \nnn_{K/F}, \beta=\chi_2 \circ \nnn_{K/F}$ for two characters $\chi_1, \chi_2 \in \Irr(F^{\times})$; thus we obtain  the results.
\end{proof}
\subsection{}\label{table}
Now it  reduces to check  whether the representation $\chi\circ \nnn_{E/F} \cdot 1_G$ of $G$ is a sub-representation of $\pi$.

Let $(\alpha\cdot\pi, V_{\alpha})$ be the  representation of $\pi$ twisted by the character $\alpha=\chi\circ \nnn_{K/F} \in \Irr(G)$. Since $\Hom_G(\pi, \alpha^{-1}\!\cdot\! 1_G) \simeq (V_{\alpha})^G$,  it suffices to determine the dimension of $(V_{\alpha})^G$ for the representation $(\alpha\cdot \pi, G, V_{\alpha})$. Notice that $(V_{\alpha})^N \simeq V^N$ which is generated by  two functions $F_{(0,0; \psi)}, G_{(0,y; \psi)}$;   the action of $T$ on $(V_{\alpha})^N$ is given by the following formulas:
 \begin{itemize}
\item[(1)]  $[\alpha\cdot\pi]\big(h(a,d)\big) F_{(0,0; \psi)} = \chi \cdot \chi_q^{+}(\nnn_{K/F}(ad))F_{(0,0; \psi^{\nnn_{E/F}(ad)})}$;
\item[(2)]  $[\alpha\cdot\pi]\big(h(a,d)\big)G_{(0,y; \psi)}= \chi \cdot \chi_q^{+}(\nnn_{K/F}(ad))G_{(0,\frac{1}{\nnn_{K/F}(a)}y; \psi^{\nnn_{K/F}(ad)})}$.
 \end{itemize}
\begin{proposition}
The vector space $(V_{\alpha})^B$ is generated by two non-zero functions $A= \sum_{t\in T} \alpha\!\cdot\!\pi(t) F_{(0,0; \phi)}$ and $B=\sum_{t\in T} \alpha\!\cdot\!\pi(t) G_{(0,1; \phi)}$ for the fixed $\phi\in X_F$.
\end{proposition}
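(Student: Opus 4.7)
The plan is to reduce the determination of $(V_\alpha)^B$ to a $T$-invariance computation on $V^N$. Since the Borel decomposes as $B = TN$ and the twist $\alpha = \chi \circ \nnn_{K/F}$ is trivial on $N$ (because $\alpha$ factors through $\det$), one has $(V_\alpha)^N = V^N$; by Proposition \ref{demp3pro3}(1) this space splits $T$-equivariantly as $V^N = V_1 \oplus V_2$, where $V_1 = \langle F_{(0,0;\psi)} : \psi \in X_F \rangle$ and $V_2 = \langle G_{(0,y;\psi)} : y \in F^{\times},\, \psi \in X_F \rangle$; the two summands are distinguished by their disjoint supports in $Y_0 \times X_F$ (concentrated respectively where $y = 0$ and where $y \neq 0$). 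Consequently $(V_\alpha)^B = V_1^T \oplus V_2^T$ under the $\alpha$-twisted $T$-action, so it suffices to analyse each summand separately.

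Inspecting Proposition \ref{demp3pro3}(2), the twisted $T$-orbit of $F_{(0,0;\phi)}$ (respectively of $G_{(0,1;\phi)}$) sweeps out all basis vectors of $V_1$ (resp.\ $V_2$) up to nonzero scalars, via the surjection $T \twoheadrightarrow F^{\times}$, $h(a,d) \mapsto \nnn_{K/F}(ad)$. Thus each $V_i^T$ is at most one-dimensional, giving $\dim(V_\alpha)^B \le 2$. For the matching lower bound I would invoke Proposition \ref{demp3pro4}(2) applied to the character $\chi^{-1}$: by Frobenius reciprocity together with semisimplicity of $V|_B$ (automatic because $V$ is a Weil representation and the irreducible constituents of $V|_B$ are among those listed in Theorem \ref{representationsofB}),
$$\dim (V_\alpha)^B \;=\; \dim \Hom_B(V|_B,\, \alpha^{-1}\!\otimes\!\alpha^{-1}) \;=\; \dim \Hom_G\big(V,\, \Ind_B^G(\alpha^{-1}\!\otimes\!\alpha^{-1})\big) \;=\; 2.$$

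It remains to exhibit an explicit basis. Both averages $A$ and $B$ are $T$-invariant by construction and $N$-invariant because each summand lies in $V^N$ and $T$ normalises $N$; hence both belong to $(V_\alpha)^B$. For non-vanishing I would evaluate $A$ at $\xi_{(0,0;\phi)}$ and the second average at $\eta_{(0,1;\phi)}$: only the elements of $\Stab_T(F_{(0,0;\phi)})$, respectively $\Stab_T(G_{(0,1;\phi)})$, contribute to the group sum, each with coefficient $\chi(1)\chi_q^{+}(1) = 1$, so the values are the positive integers $|\Stab_T(F_{(0,0;\phi)})|$ and $|\Stab_T(G_{(0,1;\phi)})|$. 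Since the two averages are supported on the disjoint subsets $\{y = 0\}$ and $\{y \neq 0\}$ of $Y_0 \times X_F$, they are linearly independent and therefore span the two-dimensional space $(V_\alpha)^B$.

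The one step requiring real care is the dimension lower bound, which rests on combining Proposition \ref{demp3pro4}(2) with Frobenius reciprocity and semisimplicity of $V|_B$; the remaining work is a direct manipulation of the explicit $T$-action of Proposition \ref{demp3pro3}(2).
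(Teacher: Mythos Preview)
Your proof is correct and is essentially an explicit unpacking of what the paper dismisses as ``straightforward'': reduce $(V_\alpha)^B$ to $T$-invariants in $V^N$, use the basis of Proposition~\ref{demp3pro3}(1), and average. One small remark: your appeal to Proposition~\ref{demp3pro4}(2) for the lower bound is logically redundant, since you separately verify that $A$ and $B$ are nonzero with disjoint supports (hence linearly independent); the orbit analysis alone already gives $\dim(V_\alpha)^B \le 2$, and the explicit nonzero independent vectors give equality.
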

\begin{proof}
It is straightforward.
\end{proof}

Our final task for this subsection  is to consider the action of $\omega$ on the vector space $(V_{\alpha})^B$.  Observe that $[\alpha\cdot \pi](\omega)A, [\alpha\cdot\pi](\omega)B$ both belong to $(V_{\alpha})^T$.  Consider the $T$-action on the set $Y_0 \times X_F$: ( We treat the vector space $(V_{\alpha})^T$ similarly as $(V_{\alpha})^N$.)
$$T \times \big( Y_0 \times X_F\big) \longrightarrow Y_0 \times X_F; \Bigg( \begin{pmatrix}
  a& 0 \\
 0 & d
 \end{pmatrix},  \bigg( \begin{pmatrix}
  0& 0 \\
 \beta & y
 \end{pmatrix}, \psi \bigg) \Bigg) \longmapsto \Bigg(  \begin{pmatrix}
  0& 0 \\
 \frac{\nnn_{K/F}(ad)}{dd^{\sigma} a^{\sigma^2}} \beta & \nnn_{K/F}(a) y
 \end{pmatrix}, \psi^{\nnn_{K/F}(ad)^{-1}} \Bigg).$$
The orbits of this action are following:
\begin{itemize}
\item[ ] Orbit $\{x_{00}\}$, $x_{00}= \bigg(\begin{pmatrix}
  0& 0 \\
 0&0
 \end{pmatrix}, \phi\bigg)$ and Orbit $\{x_{10}\}$,  $x_{10}= \bigg( \begin{pmatrix}
  0& 0 \\
 1 & 0
 \end{pmatrix}, \phi\bigg)$,
\item[ ]  Orbit $\{x_{01}\}$, $x_{01}= \bigg( \begin{pmatrix}
  0& 0 \\
 0&1
 \end{pmatrix}, \phi\bigg)$ and  Orbit $\{y_k\}$, $y_k= \bigg( \begin{pmatrix}
  0& 0 \\
  1 & 1
 \end{pmatrix}, \phi^k\bigg)$
\end{itemize}
for the fixed character $\phi\in X_F$  and any $k\in F^{\times}$. By the  calculations in Appendix 2, we obtain the following table for the values of the functions $A, B, [\alpha\cdot\pi](\omega) A, [\alpha\cdot\pi](\omega) B$ at the points (1) $x_{00}$; (2) $x_{01}$; (3) $x_{10}$; (4) $y_k$.\\

\begin{tabular}{|p{2cm}|p{3cm}|p{3cm}|p{3cm}|p{3cm}|}
\hline
  \ &  $x_{00}$ &  $x_{01}$ & $x_{10}$ & $y_k$\\
\hline
A& $(q-1)(q^2+q+1)^2$ &0 &0 &0\\
\hline
B& 0 & $(q^2+q+1)^2$ &0 & $\chi\chi_q^{+}(k)\phi(-k)(q^2+q+1)^2$ \\
\hline
$\alpha\cdot\pi(\omega)A$ & $q^{-2}(q-1)(q^2+q+1)^2$ & $q^{-2}(q-1)(q^2+q+1)^2$ & $q^{-2}(q-1)(q^2+q+1)^2$ & $\chi\chi_q^{+}(k)q^{-2}(q-1)(q^2+q+1)^2 $\\
\hline
$\alpha\cdot\pi(\omega)B$& -$q^{-1}(q-1)(q+1)(q^2+q+1)^2$ & $q^{-1}(q+1)(q^2+q+1)^2$ & $q^{-1}(q^2+q+1)^2$  & \  \\
\hline
\end{tabular}

\begin{corollary}
The element $qA-(q-1)B\in V_{\alpha}^B$ is $[\alpha \cdot\pi](\omega)$-invariant.
\end{corollary}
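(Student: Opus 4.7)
The strategy is a direct verification: show that $qA-(q-1)B$ coincides with its image under $[\alpha\cdot\pi](\omega)$ by evaluating both sides at a full set of orbit representatives.

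The first observation is that it suffices to check equality on the $T$-orbits in $Y_0\times X_F$. Indeed, $A$ and $B$ both lie in $V_\alpha^B \subseteq V_\alpha^T$, hence so does $qA-(q-1)B$. Since $\omega$ normalizes $T$, the element $[\alpha\cdot\pi](\omega)\bigl(qA-(q-1)B\bigr)$ is also $T$-invariant, so both sides are $T$-invariant functions on $Y_0\times X_F$ and therefore determined by their values on the four $T$-orbit representatives $x_{00},x_{01},x_{10},y_k$ listed just before the corollary.

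The next step is to read off those values from the table. At $x_{00}$, $x_{01}$ and $x_{10}$ all four functions $A$, $B$, $[\alpha\cdot\pi](\omega)A$, $[\alpha\cdot\pi](\omega)B$ are tabulated, and a short arithmetic check using the identity $(q-1)(q+1)+1=q^2$ gives
\[
q\cdot[\alpha\cdot\pi](\omega)A - (q-1)\cdot[\alpha\cdot\pi](\omega)B \;=\; qA - (q-1)B
\]
at these three points. At the fourth representative $y_k$ the value of $[\alpha\cdot\pi](\omega)B(y_k)$ is not in the displayed table, so one must compute it separately using the explicit formula for $\pi(\omega)$ in Proposition \ref{demp3pro2}(3) together with the definition $B=\sum_{t\in T}\alpha\cdot\pi(t)G_{(0,1;\phi)}$; the routine calculation, analogous to those carried out in Appendix 2 for the other entries of the column under $\omega B$, yields $[\alpha\cdot\pi](\omega)B(y_k)=\chi\chi_q^+(k)(q^2+q+1)^2\bigl(q^{-1}+\phi(-k)\bigr)$, after which the same linear combination produces $-(q-1)\chi\chi_q^+(k)\phi(-k)(q^2+q+1)^2$, which is precisely $(qA-(q-1)B)(y_k)$.

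The only delicate point is the computation at $y_k$, i.e.\ evaluating $[\alpha\cdot\pi](\omega)B$ at $\bigl(\begin{smallmatrix}0&0\\1&1\end{smallmatrix}\bigr)$ twisted by $\phi^k$; this requires summing over $Y_0$ with the quadratic character $\chi_q^+$ and a nontrivial additive twist, mirroring the calculations in Appendix 2. The conceptual reason one expects such a combination to exist is that the two equations forcing $\omega$-invariance at $x_{10}$ and $x_{01}$ are each linear in the coefficients $(\lambda,\mu)$ of $\lambda A+\mu B$, the $x_{10}$-equation alone forces $\lambda(q-1)+\mu q=0$, and one then verifies that this single constraint in fact guarantees $\omega$-invariance at every other orbit representative. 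This matches the expectation from the previous subsection that $\dim_{\mathbb C} V_\alpha^G\le 1$, so at most one such combination (up to scalar) can exist, and the table singles it out as $qA-(q-1)B$.
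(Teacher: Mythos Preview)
Your approach is correct but genuinely different from the paper's. The paper does \emph{not} verify the identity pointwise; instead it argues indirectly. It first forms the full $G$-average $C=\sum_{g\in G}[\alpha\cdot\pi](g)F_{(0,0;\phi)}$ and computes $C(x_{00})\neq 0$ via the Bruhat decomposition $G=B\cup N\omega B$, which shows $(V_\alpha)^G\neq 0$. Since $(V_\alpha)^G\subseteq (V_\alpha)^B$ is the $\omega$-fixed subspace and the table already gives $[\alpha\cdot\pi](\omega)A\neq A$, one gets $\dim(V_\alpha)^G=1$. The coefficients $(q,-(q-1))$ are then read off from a single row of the table (exactly the $x_{10}$-condition you mention in your last paragraph). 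The point is that the paper never needs the missing entry $[\alpha\cdot\pi](\omega)B(y_k)$: the table is left blank there on purpose.

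Your direct verification is cleaner conceptually but costs precisely that missing computation. You assert the value $[\alpha\cdot\pi](\omega)B(y_k)=\chi\chi_q^+(k)(q^2+q+1)^2\bigl(q^{-1}+\phi(-k)\bigr)$ as ``routine'', and indeed once one knows the answer it is consistent with everything else; but the sum involved has both a nontrivial multiplicative twist through $\nnn_{K/F}(a)$ and a trace term in $\beta'$ simultaneously, which makes it noticeably harder than the three entries actually computed in Appendix~2. If you intend this route, that step should be written out. The paper's averaging trick trades this character-sum computation for a much easier one (the value of $C$ at a single point), which is what the approach buys.
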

\begin{proof}
Let us consider $C=\sum_{g\in G} [\alpha\cdot \pi](g) F_{(0,0;\phi)}$. Then
$$C(x_{00})=\sum_{n\in N, b\in B}[\alpha\cdot \pi](n\omega b) F_{(0,0; \phi)} (x_{00})+ \sum_{b\in B} [\alpha \cdot \pi](b) F_{(0,0; \phi)} (x_{00})$$
$$=q^3[ \sum_{n\in N} [\alpha \cdot \pi](n) [\alpha\cdot \pi](\omega) A+ A] (x_{00})=q^3[ q^3 [\alpha \cdot \pi](\omega) A(x_{00}) + A(x_{00})]= q^3(q+1)(q-1)(q^2+q+1)^2\neq 0.$$
 As $[\alpha \cdot\pi](\omega) A\neq A$, this means that $\dim (V_{\alpha})^G =1$. So there exists two constants $a, b \in \C^{\times}$ such that $aA+b B $ is $[\alpha \cdot \pi](\omega)$-invariant. By the above diagram, we can let $a=q, b=-(q-1)$.
\end{proof}
\begin{corollary}\label{demp3cor2}
For any character $\chi \in \Irr(F^{\times})$ and  $\alpha^{-1}=\chi^{-1}\circ \nnn_{K/F}$, we have: \begin{itemize}
\item[(1)] $\dim_{\C} \Hom_{G}( V, \alpha^{-1}\cdot 1_G)=1$;
\item[(2)] $\dim_{\C} \Hom_{G}( V, \alpha^{-1}\cdot \St_G)=1$.
\end{itemize}
\end{corollary}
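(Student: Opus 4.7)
The plan is to deduce both parts from material already in place. For (1), I would observe that the preceding corollary constructs an explicit nonzero vector $qA - (q-1)B \in (V_\alpha)^B$ (here $B$ denoting the Borel) that is fixed by $[\alpha\cdot\pi](\omega)$, and since the Borel together with $\omega$ generates $G$, this vector in fact lies in $(V_\alpha)^G$. The proof of that corollary also shows $\dim_\C (V_\alpha)^G = 1$: if the full two-dimensional space $(V_\alpha)^B = \C A\oplus \C B$ were $G$-invariant, then $\omega$ would fix $A$, contradicting the entries of the table in Subsection \ref{table} (which show $[\alpha\cdot\pi](\omega)A$ is supported on $x_{01},x_{10},y_k$ as well as $x_{00}$, hence not proportional to $A$). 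Via the canonical identification $(V_\alpha)^G \simeq \Hom_G(\alpha^{-1}\!\cdot\!1_G, V)$ and semisimplicity of the finite-group representation $V$, this translates to $\dim_\C \Hom_G(V, \alpha^{-1}\!\cdot\!1_G) = 1$, giving (1).

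For (2), I would combine (1) with Proposition \ref{demp3pro4}(2) applied to $\chi^{-1}$ in place of $\chi$: since $\alpha^{-1} = \chi^{-1}\circ \nnn_{K/F}$, the proposition yields
\begin{equation*}
\dim_\C \Hom_G\bigl(V, \Ind_B^G(\alpha^{-1}\!\cdot\!1_B)\bigr) = 2.
\end{equation*}
Together with the standard decomposition $\Ind_B^G(\alpha^{-1}\!\cdot\!1_B) \simeq (\alpha^{-1}\!\cdot\!1_G) \oplus (\alpha^{-1}\!\cdot\!\St_G)$, this forces
\begin{equation*}
\dim_\C\Hom_G(V, \alpha^{-1}\!\cdot\!1_G) + \dim_\C\Hom_G(V, \alpha^{-1}\!\cdot\!\St_G) = 2,
\end{equation*}
and subtracting the contribution of (1) gives (2).

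The genuine work lies behind us, in the table calculations of Subsection \ref{table} (producing the explicit $\omega$-invariant combination $qA-(q-1)B$) and the Frobenius-reciprocity count of Proposition \ref{demp3pro4}; assuming those, the present corollary is essentially a bookkeeping step. The only step that requires mild care is the identification $(V_\alpha)^G \simeq \Hom_G(\alpha^{-1}\!\cdot\!1_G, V)$, together with the fact that for a semisimple finite-group representation the multiplicity $[V:\alpha^{-1}\!\cdot\!1_G]$ can equally well be computed as $\dim \Hom_G(V, \alpha^{-1}\!\cdot\!1_G)$; no serious obstacle arises here.
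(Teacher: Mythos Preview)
Your proof is correct and follows essentially the same route as the paper's. The paper's argument is terser but identical in substance: for (1) it notes $\Hom_G(V,\alpha^{-1}\!\cdot\!1_G)\simeq (V_\alpha)^G$, observes this has dimension at most $2$ (being contained in $(V_\alpha)^B$), and then uses $[\alpha\cdot\pi](\omega)A\neq A$ together with the $\omega$-invariance of $qA-(q-1)B$ to pin the dimension to exactly $1$; for (2) it simply cites (1) and Proposition~\ref{demp3pro4}, which is precisely your decomposition-and-subtraction argument made implicit.
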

\begin{proof}[Proof]
1) $\Hom_{\C}(V, \alpha^{-1}\cdot 1_G) \simeq (V_{\alpha})^G,$
 which is of  dimension smaller than $2$.  As $\alpha \cdot\pi(\omega) A \neq A$ and  $\alpha \cdot \pi(\omega)(qA-(q-1)B)=qA-(q-1)B$, we know  that $\dim_{\C}(V_{\alpha})^G=1$.\\
2) It follows from the above (1) and Proposition \ref{demp3pro4}.
\end{proof}
\begin{proposition}\label{noncusp}
The non-cuspidal part of the Weil representation $\pi$ is presented as follows:
\begin{displaymath}
\pi_{non-cusp} \simeq \bigoplus_{\sigma \in \Irr_{non-cusp}(\GL_2(F))} \Bc_{K/F}(\sigma),
\end{displaymath}
where $\pi_{non-cusp}$ is the non-cuspidal part of the representation $\pi$ and $\Bc_{K/F}$ is the map of base change from $\Irr(\GL_2(F))$ to $\Irr(\GL_2(K))$.
\end{proposition}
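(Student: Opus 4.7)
The plan is to assemble the result directly from Proposition~\ref{demp3pro4}, Corollary~\ref{demp3cor2} and the base-change dictionary of Theorem~\ref{basechangeGL2}(2). By the classification of irreducibles of $G=\GL_2(K)$, a non-cuspidal irreducible is either an irreducible principal series $\Pi_{\alpha,\beta}$ with $\alpha\neq\beta\in\Irr(K^{\times})$, a twisted character $\alpha\cdot 1_G$, or a twisted Steinberg $\alpha\cdot\St_G$ with $\alpha\in\Irr(K^{\times})$; and since $G$ is finite, $\pi$ is semisimple, so it suffices to compute the multiplicity $\dim_{\C}\Hom_G(\pi,\Pi)$ of each of these in $\pi$ and match it with the base-change list.

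For principal series, Proposition~\ref{demp3pro4}(1) gives the answer immediately: $\Pi_{\alpha,\beta}\simeq\Ind_B^G(\alpha\otimes\beta)$ occurs in $\pi$ with multiplicity exactly one when $\alpha=\chi_1\circ\nnn_{K/F}$ and $\beta=\chi_2\circ\nnn_{K/F}$ for some distinct $\chi_1,\chi_2\in\Irr(F^{\times})$, and does not occur otherwise. For the one-dimensional and Steinberg twists, Corollary~\ref{demp3cor2} shows that whenever $\alpha=\chi\circ\nnn_{K/F}$, each of $\alpha\cdot 1_G$ and $\alpha\cdot\St_G$ appears with multiplicity exactly one; when $\alpha$ does not factor through $\nnn_{K/F}$, the vanishing clause of Proposition~\ref{demp3pro4}(2) applied to $\Ind_B^G(\alpha\otimes\alpha)\simeq(\alpha\cdot 1_G)\oplus(\alpha\cdot\St_G)$ forces both summands to contribute nothing to $\pi$.

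Combining these multiplicities, the non-cuspidal part of $\pi$ is, up to isomorphism, the direct sum of $\Pi_{\chi_1\circ\nnn_{K/F},\chi_2\circ\nnn_{K/F}}$ (one copy for each unordered pair of distinct $\chi_1,\chi_2\in\Irr(F^{\times})$), of $(\chi\circ\nnn_{K/F})\cdot 1_G$ and of $(\chi\circ\nnn_{K/F})\cdot\St_G$ (one copy each for every $\chi\in\Irr(F^{\times})$). By Theorem~\ref{basechangeGL2}(2) these are exactly $\Bc_{K/F}(\pi_{\chi_1,\chi_2})$, $\Bc_{K/F}(\chi\cdot 1_{\GL_2(F)})$ and $\Bc_{K/F}(\chi\cdot\St_{\GL_2(F)})$ respectively. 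Surjectivity of $\nnn_{K/F}\colon K^{\times}\to F^{\times}$ (automatic for finite-field norms) makes $\chi\mapsto\chi\circ\nnn_{K/F}$ injective on $\Irr(F^{\times})$, so these base changes are pairwise distinct and enumerate $\Bc_{K/F}(\sigma)$ as $\sigma$ ranges over $\Irr_{non-cusp}(\GL_2(F))$ without repetition, matching the two sides term by term. The only genuine work, already carried out in the table of Section~\ref{table} and in Corollary~\ref{demp3cor2}, is to separate the $\alpha\cdot 1_G$ and $\alpha\cdot\St_G$ contributions inside the two-dimensional Hom space $\Hom_G(\pi,\Ind_B^G(\alpha\otimes\alpha))$; once that split is in hand, the proposition itself is pure bookkeeping against the base-change list.
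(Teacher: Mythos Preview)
Your proof is correct and follows exactly the paper's approach: the paper's proof is the one-line citation ``It follows from Theorem~\ref{basechangeGL2}(2), Proposition~\ref{demp3pro4} and Corollary~\ref{demp3cor2},'' and you have spelled out precisely how these three ingredients combine. Your additional remarks on injectivity of $\chi\mapsto\chi\circ\nnn_{K/F}$ and on the vanishing case when $\alpha$ does not factor through the norm are helpful clarifications that the paper leaves implicit.
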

\begin{proof}
It follows from Theorem \ref{basechangeGL2}(2), Proposition \ref{demp3pro4} and Corollary \ref{demp3cor2}.
\end{proof}
\begin{corollary}\label{thedimofcus}
The total dimension of the cuspidal part of $\pi$  equals $\frac{(q-1)q(q^3-1)}{2}$.
\end{corollary}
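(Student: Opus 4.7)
The plan is to deduce this corollary from Proposition~\ref{noncusp} by an elementary dimension count: first read off $\dim_{\C}\pi$ from its explicit realization, then subtract the total dimension of the non-cuspidal summands listed in that proposition.

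First I would compute $\dim_{\C}\pi$ directly. The representation $\pi$ is realized on $V=\C[Y_0\times X_F]$, where $Y_0$ is parameterized by pairs $(\beta,y)\in K\times F$, so $|Y_0|=q^4$, while $|X_F|=q-1$. Hence $\dim_{\C}\pi = q^4(q-1)$.

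Next I would tabulate the non-cuspidal part using Proposition~\ref{noncusp} together with Theorem~\ref{basechangeGL2}(2). The non-cuspidal irreducible representations of $\GL_2(F)$ fall into three families, and for each the base change to $\GL_2(K)$ is given explicitly: the principal series $\pi_{\chi_1,\chi_2}$ with $\chi_1\neq\chi_2$ lift to $\Pi_{\chi_1\circ\nnn_{K/F},\,\chi_2\circ\nnn_{K/F}}$ of dimension $q^3+1$; the characters $\psi\cdot 1_G$ lift to $(\psi\circ\nnn_{K/F})\cdot 1_{\GL_2(K)}$ of dimension $1$; and the twisted Steinbergs $\psi\cdot\St_G$ lift to $(\psi\circ\nnn_{K/F})\cdot\St_{\GL_2(K)}$ of dimension $q^3$. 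Since $\nnn_{K/F}\colon K^{\times}\to F^{\times}$ is surjective, the pullback $\chi\mapsto\chi\circ\nnn_{K/F}$ is injective on character groups, so the listed base-changes are pairwise non-isomorphic and no collapsing occurs. Counting the three families ($\binom{q-1}{2}$ unordered pairs $\{\chi_1,\chi_2\}$, $q-1$ characters $\psi$, $q-1$ twisted Steinbergs) I would obtain
\begin{align*}
\dim_{\C}\pi_{non-cusp}
&= \tbinom{q-1}{2}(q^3+1) + (q-1) + (q-1)q^3 \\
&= (q-1)(q^3+1)\!\left(\tfrac{q-2}{2}+1\right)
 = \tfrac{q(q-1)(q^3+1)}{2}.
\end{align*}

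Subtracting then gives
\begin{align*}
\dim_{\C}\pi_{cusp}
&= q^4(q-1) - \tfrac{q(q-1)(q^3+1)}{2}
 = q(q-1)\cdot\tfrac{2q^3-(q^3+1)}{2}
 = \tfrac{q(q-1)(q^3-1)}{2},
\end{align*}
which is the claimed formula. There is no substantive obstacle here: the argument is mere bookkeeping once Proposition~\ref{noncusp} and the explicit base-change formulas of Theorem~\ref{basechangeGL2}(2) are in hand. The only subtlety worth checking is that the three non-cuspidal families of $\Irr(\GL_2(F))$ remain distinct after base change, which follows from the surjectivity of $\nnn_{K/F}$.
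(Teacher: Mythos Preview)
Your proof is correct and follows exactly the same approach as the paper: compute $\dim_{\C}\pi=(q-1)q^4$ from the Schr\"odinger model, sum the dimensions of the base-changed non-cuspidal representations from Proposition~\ref{noncusp} to get $\tfrac{(q-1)q(q^3+1)}{2}$, and subtract. One minor remark: your check that the base-changes are pairwise non-isomorphic is unnecessary here, since Proposition~\ref{noncusp} already expresses $\pi_{non-cusp}$ as a direct sum indexed by $\sigma\in\Irr_{non-cusp}(\GL_2(F))$, so the dimension is simply the sum of $\dim\Bc_{K/F}(\sigma)$ regardless of any coincidences.
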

\begin{proof}
By Proposition \ref{noncusp}, the dimension of the  non-cuspidal part of $\pi$ equals
$$(q^3+1) \cdot \frac{(q-1)(q-2)}{2} + 1 \cdot (q-1) + q^3  \cdot (q-1) = \frac{(q-1)q(q^3+1)}{2};$$
the dimension of $\pi$ is $(q-1)q^4$, and $(q-1)q^4- \frac{(q-1)q(q^3+1)}{2}=\frac{(q-1)q(q^3-1)}{2}$.
\end{proof}
\subsection{} We continue the above discussion and  determine the cuspidal part of $\pi$.

Now let $K_1$ (resp. $F_1$) be a quadratic field extension of $K$ (resp. $F$). Assume $K_1 \supset F_1$.  Let $\rho$(resp. $\rho_1$) denote the Weil representation of $\textbf{GSp}_{W_0}(F)$(resp. $\textbf{GSp}_{W_0}(F_1)$). Denote by $\pi=\rho|_{\GL_2(K)}$ and $\pi_1=\rho_1|_{\GL_2(K_1)}$.
By Proposition \ref{liftingGSp} in Section \ref{shintanilift}, there exists a unique representation $\widetilde{\rho_1}$ of the group
$\textbf{GSp}_{W_0}(F_1)\rtimes \Gal(F_1/F) $ such that $0-\res(\widetilde{\rho_1})=\rho_1$,   and $1-\res(\widetilde{\rho_1})=\rho$.
By the result in Section \ref{Weildescent1}, there exists a morphism from $\Res_{K/F}(\textbf{GL}_2)$ to $\textbf{GSp}_{W_0}$, which induces a map $\widetilde{p_1}: \textbf{GL}_2(K_1) \rtimes \Gal(K_1/K)  \simeq \Res_{K/F}(\textbf{GL}_2)(F_1)\rtimes  \Gal(F_1/F)  \longrightarrow  \textbf{GSp}_{W_0}(F_1)\rtimes \Gal(F_1/F)$.
 Via the map $\widetilde{p_1}$, we let $\widetilde{\pi_1}=\widetilde{\rho_1}|_{ \textbf{GL}_2(K)\rtimes \Gal(K_1/K) }$. By Lemma \ref{Indresnorm}, one sees $0-\res(\widetilde{\pi_1})=\pi_1 \textrm{  and } 1-\res(\widetilde{\pi_1})=\pi.$ For a cuspidal representation $\Pi_{\Lambda}$ of $\GL_2(K)$, by Theorem \ref{basechangeGL2} we know $\Bc_{K_1/K}(\Pi_{\Lambda})=\Pi_{\Lambda, \Lambda^{q^3}}$. Let $\widetilde{\Pi_{\Lambda, \Lambda^{q^3}}}$ denote the unique representation of the group $\textbf{GL}_2(K_1)\rtimes \Gal(K_1/K) $ such that
$0-\res (\widetilde{\Pi_{\Lambda, \Lambda^{q^3}}})= \Pi_{\Lambda, \Lambda^{q^3}} \textrm{  and  }1-\res (\widetilde{\Pi_{\Lambda, \Lambda^{q^3}}})=\Pi_{\Lambda}$.
By Proposition \ref{demp3pro4},  $\langle\pi_1,\Pi_{\Lambda, \Lambda^{q^3}} \rangle_{\GL_2(K_1)}=1$ for  $\Lambda=\lambda\circ \nnn_{K_1/F_1}$, where $\lambda$ is a regular character of $F_1^{\times}$. By Lemma \ref{basenorm}(i), we have
$$\langle\tr \widetilde{ \pi_1},\tr\widetilde{\Pi_{\Lambda, \Lambda^{q^3}}}\rangle_{  \textbf{GL}_2(K_1)\rtimes \Gal(K_1/K)}$$
$$= \frac{1}{|\GL_2(K_1)\rtimes \Gal(K/F)|} \Bigg( \sum_{g\in \GL_2(K_1)}  \tr\widetilde{\pi_1}\Big( (1, g)\Big) \overline{\tr \widetilde{\Pi_{\Lambda, \Lambda^{q^3}}}\Big( (1,g)\Big)} + \sum_{g\in \GL_2(K_1)}   \tr\widetilde{\pi_1}\Big( (\sigma, g)\Big) \overline{\tr\widetilde{\Pi_{\Lambda, \Lambda^{q^3}}}\Big( (\sigma,g)\Big) }\Bigg)$$
$$=\frac{|\GL_2(K_1)|}{ | \GL_2(K_1)\rtimes \Gal(K_1/K) |} \langle\tr \pi_1, \tr \Pi_{\Lambda, \Lambda^{q^3}}\rangle_{\GL_2(K_1)} + \frac{|\GL_2(K_1)|}{ |\GL_2(K_1) \rtimes \Gal(K_1/K) |} \langle\tr \pi, \tr\Pi_{\Lambda} \rangle_{\GL_2(K)}$$
$$=\frac{1}{2}\bigg(\langle \tr \pi_1, \tr \Pi_{\Lambda, \Lambda^{q^3}} \rangle_{\GL_2(K_1)}+\langle\tr \pi, \tr \Pi_{\Lambda} \rangle_{\GL_2(K)}\bigg)=\frac{1}{2}\bigg(1+\langle \tr \pi, \tr\Pi_{\Lambda} \rangle_{\GL_2(K)}\bigg)$$
for $\Lambda=\lambda\circ \nnn_{K_1/F_1}$. It follows that for such $\Lambda$, $\langle\pi,\Pi_{\Lambda} \rangle_{\GL_2(K)}\geq 1$.  By Corollary \ref{thedimofcus}, we see $\langle\pi,\Pi_{\Lambda} \rangle_{\GL_2(K)}= 1$ and it will also turn out that there are no other kind of cuspidal sub-representations of $\pi$. Finally we achieve the main theorem in this section:
\subsection{}
\begin{theorem}\label{mainth4}
The representation  $(\pi,V )$ has the following decomposition:
\begin{displaymath}
\pi\simeq \bigoplus_{\sigma \in \Irr(\GL_2(F))} \Bc_{K/F}(\sigma),
\end{displaymath}
where $\Irr(\GL_2(F))$ is the set of the classes of the irreducible representations of $\GL_2(F)$, and $Bc_{K/F}$ is the base change   from $\Irr(\GL_2(F))$ to $\Irr(\GL_2(K))$.
\end{theorem}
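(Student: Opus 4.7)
The plan is to assemble this theorem from the three strands already developed in Section \ref{demp3}: the non-cuspidal decomposition in Proposition \ref{noncusp}, the Shintani-lifting inequality for cuspidal constituents established just before the theorem statement, and the dimension count in Corollary \ref{thedimofcus}. Since Proposition \ref{noncusp} already identifies all the non-cuspidal isotypic components of $\pi$ as $\bigoplus_{\sigma \in \Irr_{non\text{-}cusp}(\GL_2(F))} \Bc_{K/F}(\sigma)$, what remains is to prove that the cuspidal part of $\pi$ is $\bigoplus_{\theta} \Bc_{K/F}(\pi_\theta)$, where $\theta$ runs over the regular characters of $F_1^\times$ modulo the action $\theta\mapsto \theta^q$ (with $F_1$ the quadratic extension of $F$).

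First I would invoke the Shintani-lifting argument already written out above. For each regular $\theta\in\Irr(F_1^\times)$, set $K_1=K\cdot F_1$ and $\Lambda=\theta\circ\nnn_{K_1/F_1}$, so that $\Bc_{K/F}(\pi_\theta)=\Pi_\Lambda$ by Theorem \ref{basechangeGL2}(2)(iv). The computation using Lemma \ref{basenorm}(i), Proposition \ref{liftingGSp}, and the already-known multiplicity $\langle\pi_1,\Pi_{\Lambda,\Lambda^{q^3}}\rangle_{\GL_2(K_1)}=1$ from Proposition \ref{demp3pro4} yields $\langle\pi,\Pi_\Lambda\rangle_{\GL_2(K)}\geq 1$ for every such $\Lambda$.

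Next I would close the argument by a dimension match. The regular characters of $F_1^\times$ split into $q(q-1)/2$ orbits under $\theta\mapsto\theta^q$, each contributing a cuspidal $\Bc_{K/F}(\pi_\theta)=\Pi_\Lambda$ of $\GL_2(K)$ of dimension $q^3-1$; moreover the representations $\Pi_\Lambda$ obtained from distinct orbits are pairwise non-isomorphic since $\Bc_{K/F}$ is injective on $\Irr(\GL_2(F))$. Their total dimension is therefore $\tfrac{q(q-1)(q^3-1)}{2}$, which coincides exactly with the dimension of the cuspidal part of $\pi$ computed in Corollary \ref{thedimofcus}. Combined with the lower bound $\langle\pi,\Pi_\Lambda\rangle\geq 1$, this forces each such $\Pi_\Lambda$ to appear with multiplicity exactly $1$ and leaves no room for any other cuspidal constituent, completing the proof.

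The genuinely delicate step is the Shintani-lifting comparison in the preceding subsection, which transports the known multiplicity over $K_1$ down to $K$; the present theorem itself is essentially the bookkeeping that follows once that inequality and the dimension of Corollary \ref{thedimofcus} are both in hand.
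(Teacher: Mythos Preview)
Your proposal is correct and follows essentially the same approach as the paper: the non-cuspidal part is handled by Proposition \ref{noncusp}, and for the cuspidal part you combine the Shintani-lifting inequality $\langle\pi,\Pi_\Lambda\rangle_{\GL_2(K)}\geq 1$ with the dimension count of Corollary \ref{thedimofcus} to force equality and exclude any further cuspidal constituents. You even make explicit the dimension bookkeeping (counting the $q(q-1)/2$ Galois orbits of regular characters of $F_1^\times$ and noting injectivity of $\Bc_{K/F}$) that the paper leaves implicit in its one-line appeal to Corollary \ref{thedimofcus}.
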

\begin{proof}
It follows from  Proposition \ref{noncusp}  for  non-cuspidal representations and the above discussion for  cuspidal representations.
\end{proof}
\subsection{ Appendix 1} In the following, we explain  how to get  the formulas  in Proposition \ref{demp3pro2}.

(1):
$$[\pi\big( u(b)F\big)]\bigg( \begin{pmatrix}
  0& 0 \\
 \beta & y
\end{pmatrix}, \psi\bigg)$$
$$= [\rho\bigg( i\Big(u(b)\Big)F\bigg)]\bigg( \begin{pmatrix}
  0& 0 \\
 \beta & y
\end{pmatrix}, \psi\bigg)$$
$$=[\rho\bigg( \begin{pmatrix}
  m& 0 \\
 0 & m^{\vee}
\end{pmatrix}\begin{pmatrix}
  1& m^{-1}n \\
 0 & 1
\end{pmatrix}\bigg)F]\bigg( \begin{pmatrix}
  0& 0 \\
 \beta & y
\end{pmatrix}, \psi\bigg)$$
$$=\chi_q^+(det_{X_0}m)[\rho\bigg(\begin{pmatrix}
  1& m^{-1}n \\
 0 & 1
\end{pmatrix}\bigg)F]\Bigg( m^{\vee-1} \begin{pmatrix}
  0& 0 \\
 \beta & y
\end{pmatrix}, \psi\Bigg)$$
$$=[\rho\bigg(\begin{pmatrix}
  1& m^{-1}n\\
 0 & 1
\end{pmatrix}\bigg)F]\Bigg( \begin{pmatrix}
  0& 0 \\
 \beta-b^{\sigma^2}y & y
\end{pmatrix}, \psi\Bigg)$$
$$=\psi\Bigg(\frac{1}{2} \langle m^{-1}n \begin{pmatrix}
  0& 0 \\
 \beta-b^{\sigma^2}y & y
\end{pmatrix},\quad \begin{pmatrix}
  0& 0 \\
 \beta-b^{\sigma^2}y & y
\end{pmatrix}\rangle \Bigg) F\bigg( \begin{pmatrix}
  0& 0 \\
 \beta-b^{\sigma}y & y
\end{pmatrix}, \psi\bigg)$$
$$=\psi\bigg( \frac{1}{2}\langle m^{-1}\begin{pmatrix}
  \tr_{E/F}(bb^{\sigma}\beta)-2\nnn_{K/F}(b)y& b^{\sigma}\beta^{\sigma}+ b\beta^{\sigma^2}-bb^{\sigma}y \\
 0& 0
\end{pmatrix}, \quad \begin{pmatrix}
  0& 0 \\
 \beta-b^{\sigma^2}y & y
\end{pmatrix} \rangle\bigg) F\Bigg( \begin{pmatrix}
  0& 0 \\
 \beta-b^{\sigma^2}y & y
\end{pmatrix}, \psi\Bigg) $$
$$=\psi\bigg( \frac{1}{2} \langle \begin{pmatrix}
  \nnn_{K/F}(b)y-\tr_{K/F}(bb^{\sigma}\beta)& b^{\sigma}\beta^{\sigma}+b\beta^{\sigma^2}-bb^{\sigma}y \\
 0 & 0
\end{pmatrix}, \quad \begin{pmatrix}
  0& 0 \\
 \beta-b^{\sigma^2}y & y
\end{pmatrix}\rangle \bigg)F\Bigg( \begin{pmatrix}
  0& 0 \\
 \beta-b^{\sigma^2}y & y
\end{pmatrix}, \psi\Bigg)$$
$$=\psi\bigg( \tr_{K/F}(bb^{\sigma}\beta y)-\nnn_{K/F}(b)y^{2}-\tr_{K/F}(b\beta\beta^{\sigma^2})\bigg) F\Bigg( \begin{pmatrix}
  0& 0 \\
 \beta-b^{\sigma^2}y & y
\end{pmatrix}, \psi\Bigg).$$
(2):
$$[\pi(h(a,d))F]\bigg( \begin{pmatrix}
  0& 0 \\
 \beta & y
\end{pmatrix}, \psi\bigg)$$
$$= [\rho\bigg( i\Big(h(a,d)\Big)\bigg)F]\Bigg( \begin{pmatrix}
  0& 0 \\
 \beta & y
\end{pmatrix}, \psi\Bigg)$$
$$=[\rho\bigg( \begin{pmatrix}
  m& 0 \\
 0 & n
\end{pmatrix} \begin{pmatrix}
  1& 0 \\
 0& \nnn_{K/F}(ad)^{-1}
\end{pmatrix}  \begin{pmatrix}
  1& 0 \\
 0& \nnn_{K/F}(ad)
\end{pmatrix}\bigg)F] \Bigg(  \begin{pmatrix}  0& 0 \\ \beta& y \end{pmatrix}, \psi\Bigg)$$
$$=\chi_q^+(det_{X_0}(m))[\rho\bigg(  \begin{pmatrix}
  1& 0 \\
 0& \nnn_{K/F}(ad)
\end{pmatrix}\bigg) F]\Bigg( n^{-1}  \begin{pmatrix}
  0& 0 \\
 \nnn_{K/F}(ad)\beta& \nnn_{K/F}(ad)y
\end{pmatrix}, \psi\Bigg)$$
$$=\chi_q^+(\nnn_{K/F}(ad))[\rho\bigg(  \begin{pmatrix}
  1& 0 \\
 0& \nnn_{K/F}(ad)
\end{pmatrix}\bigg) F]\Bigg(  \begin{pmatrix}
  0& 0 \\
 \frac{\nnn_{K/F}(ad)}{dd^{\sigma}a^{\sigma^2}}\beta& \nnn_{K/F}(a)y
\end{pmatrix}, \psi\Bigg)$$
$$=\chi_q^+(\nnn_{K/F}(ad))F\Bigg(  \begin{pmatrix}
  0& 0 \\
 \frac{\nnn_{K/F}(ad)}{dd^{\sigma}a^{\sigma^2}}\beta& \nnn_{K/F}(a)y
\end{pmatrix}, \psi^{\nnn_{K/F}(ad)^{-1}}\Bigg).$$
(3): Assume $K=F(\xi)$. For a matrix $X$, we denote its transpose by ${}^t X$. Choose a basis $\mathcal{A}=\{ m_0=e_1\otimes e_1 \otimes e_1, m_1=\xi e_1\otimes e_1 \otimes e_2 + \xi^{\sigma} e_2 \otimes e_1 \otimes e_1 + \xi^{\sigma^2} e_1 \otimes e_2 \otimes e_1, m_2=\xi^{\sigma} e_1\otimes e_1 \otimes e_2 + \xi^{\sigma^2} e_2 \otimes e_1 \otimes e_1 + \xi e_1 \otimes e_2 \otimes e_1, m_3=\xi^{\sigma^2} e_1\otimes e_1 \otimes e_2 + \xi e_2 \otimes e_1 \otimes e_1 + \xi^{\sigma} e_1 \otimes e_2 \otimes e_1; n_0=-e_2\otimes e_2 \otimes e_2, n_1=\xi e_2\otimes e_2 \otimes e_1+ \xi^{\sigma} e_1 \otimes e_2 \otimes e_2 + \xi^{\sigma^2} e_2 \otimes e_1 \otimes e_2,
n_2=\xi^{\sigma} e_2\otimes e_2 \otimes e_1+ \xi^{\sigma^2} e_1 \otimes e_2 \otimes e_2 + \xi e_2 \otimes e_1 \otimes e_2,  n_3=\xi^{\sigma^2} e_2\otimes e_2 \otimes e_1+ \xi e_1 \otimes e_2 \otimes e_2 + \xi^{\sigma} e_2 \otimes e_1 \otimes e_2\}$ in $W_0$. Then by Corollary \ref{demp3cor1} (3), we know $i(\omega)(m_i)=n_i$ and $i(\omega)(n_i)=-m_i$ for $0 \leq i \leq 3.$ By calculation,  we obtain
\begin{displaymath}
\left( \begin{array}{cccccccc}
\langle m_0, m_0\rangle   &\ldots    &\langle m_0, m_3\rangle   &\langle m_0, n_0 \rangle  &\ldots    &\langle m_0, n_3 \rangle \\
\vdots                    &\ddots    &  \vdots                  &  \vdots                  &\ddots    & \vdots        \\
\langle m_3, m_0\rangle   &\ldots    &\langle m_3, m_3\rangle   &\langle m_3, n_0 \rangle  &\ldots    &\langle m_3, n_3 \rangle \\
\langle n_0, m_0\rangle   &\ldots    &\langle n_0, m_3\rangle   &\langle n_0, n_0 \rangle  &\ldots    &\langle n_0, n_3 \rangle \\
\vdots                    &\ddots    &  \vdots                  &  \vdots                  &\ddots    & \vdots        \\
\langle n_3, m_0\rangle   &\ldots    &\langle n_3, m_3\rangle   &\langle n_3, n_0 \rangle  &\ldots    &\langle n_3, n_3 \rangle \\
\end{array} \right)=
\left( \begin{array}{cccc}
0&  A\\
-A & 0
\end{array} \right)
\end{displaymath}
where  $A=\left( \begin{array}{cccccccc}
-1      & 0                            & 0                              &0                                   \\
0       & -\tr_{K/F}(\xi^2)            & -\tr_{K/F}(\xi\xi^{\sigma})    & -\tr_{K/F}(\xi\xi^{\sigma})         \\
0       & -\tr_{K/F}(\xi\xi^{\sigma})  & -\tr_{K/F}(\xi^2)              & -\tr_{K/F}(\xi\xi^{\sigma})         \\
0       & -\tr_{K/F}(\xi\xi^{\sigma})  & -\tr_{K/F}(\xi\xi^{\sigma})    & -\tr_{K/F}(\xi^2)
\end{array} \right). $\\
Suppose $A={}^t P_1 P_1$ and $(g_0, \cdots , g_3; h_0, \cdots , h_3)= (m_0, \cdots, m_3; n_0, \cdots, n_3) P$ for some $P= \left( \begin{array}{cccc}
P_1^{-1}& 0 \\
0     & P_1^{-1}
\end{array} \right)$.  Then:
\begin{displaymath}
\left( \begin{array}{cccccccc}
\langle g_0, g_0\rangle   &\ldots    &\langle g_0, g_3\rangle   &\langle g_0, h_0 \rangle  &\ldots    &\langle g_0, h_3 \rangle \\
\vdots                    &\ddots    &  \vdots                  &  \vdots                  &\ddots    & \vdots        \\
\langle g_3,  g_0\rangle   &\ldots    &\langle g_3, g_3\rangle   &\langle g_3, h_0 \rangle  &\ldots    &\langle g_3, h_3 \rangle \\
\langle h_0, g_0\rangle   &\ldots    &\langle h_0, g_3\rangle   &\langle h_0, h_0 \rangle  &\ldots    &\langle h_0, h_3 \rangle \\
\vdots                    &\ddots    &  \vdots                  &  \vdots                  &\ddots    & \vdots        \\
\langle h_3, g_0\rangle   &\ldots    &\langle h_3, g_3\rangle   &\langle h_3, h_0 \rangle  &\ldots    &\langle h_3, h_3 \rangle \\
\end{array} \right)={}^tP\left( \begin{array}{cccccccc}
\langle m_0, m_0\rangle   &\ldots    &\langle m_0, m_3\rangle   &\langle m_0, n_0 \rangle  &\ldots    &\langle m_0, n_3 \rangle \\
\vdots                    &\ddots    &  \vdots                  &  \vdots                  &\ddots    & \vdots        \\
\langle m_3, m_0\rangle   &\ldots    &\langle m_3, m_3\rangle   &\langle m_3, n_0 \rangle  &\ldots    &\langle m_3, n_3 \rangle \\
\langle n_0, m_0\rangle   &\ldots    &\langle n_0, m_3\rangle   &\langle n_0, n_0 \rangle  &\ldots    &\langle n_0, n_3 \rangle \\
\vdots                    &\ddots    &  \vdots                  &  \vdots                  &\ddots    & \vdots        \\
\langle n_3, m_0\rangle   &\ldots    &\langle n_3, m_3\rangle   &\langle n_3, n_0 \rangle  &\ldots    &\langle n_3, n_3 \rangle \\
\end{array} \right)P
\end{displaymath}

\[= \left( \begin{array}{cccc}
{}^tP_1^{-1}& 0\\
0& {}^tP_1^{-1}
\end{array} \right)\left( \begin{array}{cccc}
0& A \\
 -A & 0
\end{array} \right) \left( \begin{array}{cccc}
P_1^{-1}&0\\
0  & P_1^{-1}
\end{array} \right)=\left( \begin{array}{cccc}
0& I \\
-I& 0
\end{array} \right).\]
i.e. the set $\{ g_0, \cdots, g_3; h_0, \cdots, h_3\}$ is a symplectic basis of $W_0$. Moreover $i(\omega) \big( g_0, \cdots, g_3; h_0, \cdots, h_3\big)=
\big( g_0, \cdots, g_3; h_0, \cdots, h_3) P^{-1} \left(\begin{array}{cccc}
0& -I \\
I& 0
\end{array} \right)P$. And $ P^{-1} \left(\begin{array}{cccc}
0& -I \\
I& 0
\end{array} \right)P= \left(\begin{array}{cccc}
0& -I \\
I& 0
\end{array}\right)=\omega_{\GSp(W_0)}^{-1} \in \GSp(W_0)$ with respect to the symplectic basis $\{ g_0, \cdots, g_3; h_0, \cdots, h_3\}$.
Now let $\alpha=a_1 \xi + a_2 \xi^{\sigma} + a_3 \xi^{\sigma^2} $, $\beta=b_1\xi+ b_2\xi^{\sigma} + b_3 \xi^{\sigma^2} $ $\in K$. Put $b=\big( b_0, \cdots, b_3\big)$ and $a=\big( a_0, \cdots, a_3\big)$. Then
$$[\pi(\omega)F]\bigg( \left(\begin{array}{cccc}
0& 0 \\
\beta& -b_0
\end{array}\right), \psi\bigg)= \rho[ i(\omega) F]\Big( (n_0, \cdots, n_3) {}^tb, \psi\Big)$$
$$=q^{-2} \sum_{(n_0, \cdots, n_3){}^ta \in Y_0} F\big( (n_0, \cdots, n_3){}^ta, \psi\big) \psi\Big( \langle (n_0, \cdots, n_3){}^ta, \omega_{\GSp(W_0)}\big( (n_0, \cdots, n_3){}^tb\big)\rangle \Big)$$
$$=q^{-2} \sum_{(n_0, \cdots, n_3){}^ta \in Y_0} F\big( (n_0, \cdots, n_3) {}^ta, \psi\big) \psi\Big( \langle (n_0, \cdots, n_3) {}^ta, i(\omega^{-1})[(n_0, \cdots, n_3){}^tb]\rangle\Big)$$
$$= q^{-2} \sum_{(n_0, \cdots, n_3) {}^ta \in Y_0} F\Big( -a_0 e_2 \otimes e_2 \otimes e_2 + \alpha e_2 \otimes e_2 \otimes e_1  +  \alpha^{\sigma} e_1 \otimes e_2 \otimes e_2 + \alpha^{\sigma^2} e_2 \otimes e_1 \otimes e_2, \psi\Big) $$
$$\psi(\langle -a_0e_2 \otimes e_2 \otimes e_2 + \alpha e_2 \otimes e_2 \otimes e_1 + \alpha^{\sigma} e_1 \otimes e_2 \otimes e_2 + \alpha^{\sigma^2} e_2 \otimes e_1 \otimes e_2,$$
$$b_0 e_1 \otimes e_1 \otimes e_1 + \beta e_1 \otimes e_1 \otimes e_2 + \beta^{\sigma} e_2 \otimes e_1 \otimes e_1 + \beta^{\sigma^2} e_1 \otimes e_2 \otimes e_1\rangle )$$
$$=q^{-2} \sum_{a_0 \in F, \alpha \in K} F(-a_0 e_2 \otimes e_2 \otimes e_2 + \alpha e_2 \otimes e_2 \otimes e_1  +  \alpha^{\sigma} e_1 \otimes e_2 \otimes e_2 + \alpha^{\sigma^2} e_2 \otimes e_1 \otimes e_2, \psi) \psi (a_0 b_0 + \tr_{K/F}( \alpha \beta))$$
$$=q^{-2}  \sum_{a_0 \in F, \alpha \in K}  F\bigg(\begin{pmatrix}
  0& 0 \\
 \alpha& -a_0
\end{pmatrix},\psi\bigg)\psi (a_0 b_0 + \tr_{K/F}( \alpha \beta)).$$
Finally, we obtain
$$[\pi(\omega) F]\bigg(\begin{pmatrix}
  0& 0 \\
 \beta & y
\end{pmatrix},\psi\bigg)= q^{-2} \sum_{
 \beta' \in K,  y'\in F} F\bigg(\begin{pmatrix}
  0& 0 \\
 \beta' & y'
\end{pmatrix},\psi\bigg)\psi(yy'+ \tr_{K/F}(\beta\beta')).$$
\subsection{ Appendix 2} We put the calculations for the table in Section \ref{table} in this appendix.
From the definition, we see:
 $$A \bigg( \begin{pmatrix}
 0& 0\\
  \beta & y\end{pmatrix}, \phi^k\bigg)= \sum_{t\in T} [\alpha\cdot \pi](t)F_{(0,0; \phi)} \bigg( \begin{pmatrix}
 0& 0\\
  \beta & y\end{pmatrix}, \phi^k\bigg)$$
 $$= \chi\chi_q^+(\nnn_{K/F}(ad)) \sum_{a, d\in K^{\times}} F_{(0,0; \phi)}  \bigg( \begin{pmatrix}
 0& 0\\
  \frac{\nnn_{K/F}(ad)}{dd^{\sigma}a^{\sigma^2}}\beta & \nnn_{K/F}(a) y\end{pmatrix}, \phi^{\nnn_{K/F}(ad)^{-1}k}\bigg)$$
 $$=  \sum_{a, d\in K^{\times}, \nnn_{K/F}(ad)=k} \chi\chi_q^+(k) F_{(0,0; \phi)} \bigg( \begin{pmatrix}
 0& 0\\
  \frac{\nnn_{K/F}(ad)}{dd^{\sigma}a^{\sigma^2}}\beta & \nnn_{K/F}(a) y\end{pmatrix}, \phi\bigg);$$
  \ \\
$$B \bigg( \begin{pmatrix}
 0& 0\\
  \beta & y\end{pmatrix}, \phi^k\bigg)=  \sum_{t\in T} [\alpha\cdot \pi](t) G_{(0,1; \phi)} \bigg( \begin{pmatrix}
 0& 0\\
  \beta & y\end{pmatrix}, \phi^k\bigg)$$
$$= \sum_{a, d\in K^{\times}} \chi\chi_q^+(\nnn_{K/F}(ad)) G_{(0,1; \phi)} \bigg( \begin{pmatrix}
 0& 0\\
  \frac{\nnn_{K/F}(ad)}{dd^{\sigma}a^{\sigma^2}}\beta & \nnn_{K/F}(a) y\end{pmatrix}, \phi^{\nnn_{K/F}(ad)^{-1}k}\bigg)$$
 $$=  \sum_{a, d\in K^{\times}, \nnn_{K/F}(ad)=k}  \chi\chi_q^+(k)G_{(0,1; \phi)} \bigg( \begin{pmatrix}
 0& 0\\
  \frac{\nnn_{K/F}(ad)}{dd^{\sigma}a^{\sigma^2}}\beta & \nnn_{K/F}(a) y\end{pmatrix}, \phi\bigg);$$
\ \\
$$ \big( [\alpha \cdot \pi](\omega) A\big) \bigg( \begin{pmatrix}
 0& 0\\
  \beta & y\end{pmatrix}, \phi^k\bigg) $$
  $$= \sum_{t\in T} [\alpha \cdot \pi](t) \Big( [\alpha \cdot \pi](\omega) F_{(0,0; \phi)}\Big) \bigg( \begin{pmatrix}
  0& 0\\
  \beta & y\end{pmatrix}, \phi^k\bigg)$$
  $$= \sum_{a,d \in K^{\times}} \chi \chi_q^+ (\nnn_{K/F}(ad) ) \big( [\alpha \cdot \pi](\omega) F_{(0,0; \phi)}\big) \bigg(\begin{pmatrix}
  0& 0\\
  \frac{\nnn_{K/F}(ad)}{dd^{\sigma}a^{\sigma^2}}  \beta &\nnn_{K/F}(a) y\end{pmatrix}, \phi^{k\nnn_{K/F}(ad)^{-1}}\bigg)$$
  $$= q^{-2} \sum_{a,d \in K^{\times}}\chi\chi_q^+(\nnn_{K/F}(ad)) \sum_{ \begin{pmatrix}
  0& 0\\
   \beta' & y'\end{pmatrix} \in Y_0} F_{(0,0; \phi)} \bigg( \begin{pmatrix}
  0& 0\\
   \beta' & y'\end{pmatrix}, \phi^{k\nnn_{K/F}(ad)^{-1}}\bigg) \phi^{k\nnn_{K/F}(ad)^{-1}} \Big( \nnn_{K/F}(a) y y' + \tr_{K/F}(\frac{\nnn_{K/F}(ad)}{dd^{\sigma}a^{\sigma^2}} \beta \beta')\Big)$$
  $$= q^{-2} \sum_{a,d\in K^{\times}, \nnn_{K/F}(ad)=k} \chi \chi_q^+(k)=q^{-2}\chi \chi_q^+(k) (q^3-1)(q^2+q+1);$$
  \ \\
 Notice: $G_{(0,1; \phi)}(\begin{pmatrix}
  0& 0\\
   \beta & 1\end{pmatrix}, \phi)=\phi(-\nnn_{K/F}(\beta))$ by the formula (\ref{demp3eq1}).

$$(\alpha \cdot \pi(\omega) B) \bigg( \begin{pmatrix}
  0& 0\\
   \beta & y\end{pmatrix}, \phi^k\bigg) =\sum_{t\in T} \alpha\cdot \pi(t) [\alpha\cdot \pi(\omega)] G_{(0,1; \phi)} ( \begin{pmatrix}
  0& 0\\
   \beta & y\end{pmatrix}, \phi^k)$$
$$= \sum_{a,d\in K^{\times}} \chi \chi_q^+(\nnn_{K/F}(ad)) [\alpha \cdot \pi](\omega) G_{(0,1;\phi)} \bigg(\begin{pmatrix}
  0& 0\\
  \frac{\nnn_{K/F}(ad)}{dd^{\sigma}a^{\sigma^2}} \beta & \nnn_{K/F}(a) y\end{pmatrix}, \phi^{k\nnn_{K/F}(ad)^{-1}}\bigg)$$
$$= q^{-2} \sum_{a, d\in K^{\times}} \chi \chi_q^+(\nnn_{K/F}(ad)) \sum_{\begin{pmatrix}
  0& 0\\
   \beta' & y'\end{pmatrix} \in Y_0} G_{(0,1; \phi)} \bigg( \begin{pmatrix}
  0& 0\\
   \beta' & y'\end{pmatrix}, \phi^{k \nnn_{K/F}(ad)^{-1}}\bigg) \phi^{k\nnn_{K/F}(ad)^{-1}} \Big(\nnn_{K/F}(a)yy' + \tr_{K/F}(\frac{\nnn_{K/F}(ad)}{dd^{\sigma}a^{\sigma^2}} \beta \beta')\Big)$$
$$= q^{-2} \sum_{a,d \in K^{\times}, \nnn_{K/F}(ad)=k} \chi \chi_q^+(k) \sum_{\beta'\in K} \phi(-\nnn_{K/F}(\beta')) \phi\bigg(\nnn_{K/F}(a)y + \tr_{K/F}(aa^{\sigma} d^{\sigma^2} \beta \beta')\bigg);$$
\ \\
(1) $$A(x_{00})=\sum_{a, d \in K^{\times}, \nnn_{K/F}(ad)=1} F_{(0,0, \phi)} (\begin{pmatrix}
  0& 0\\
   0&0\end{pmatrix}, \phi)$$
$$= \sum_{a, d \in K^{\times}, \nnn_{K/F}(ad)=1} 1= (q^3-1)(q^2+q+1).$$
$$A(x_{10})=A(x_{01})=A(y_k)=0.$$
(2) $$B(x_{00})=\sum_{a, d\in K^{\times}, \nnn_{K/F}(ad)=1}  G_{(0,1;\phi)} (\begin{pmatrix}
  0& 0\\
   0&0\end{pmatrix}, \phi)=0=B(x_{10}).$$
$$B(x_{01})=\sum_{a,d \in K^{\times}, \nnn_{K/F}(ad)=1} G_{(0,1; \phi)}\bigg( \begin{pmatrix}
  0& 0\\
   0&\nnn_{K/F}(a)\end{pmatrix}, \phi\bigg)$$
$$= \sum_{a,d\in K^{\times}, \nnn_{K/F}(a)=\nnn_{K/F}(d)=1} 1=(q^2+q +1)^2.$$
$$B(y_k)= \sum_{t\in T} [\alpha \cdot \pi](t) G_{(0,1, \phi)}(y_k)$$
$$= \sum_{a,d\in K^{\times}, \nnn_{K/F}(ad)=k } \chi \chi_q^+ (k) G_{(0,1;\phi)} \bigg( \begin{pmatrix}
  0& 0\\
  \frac{\nnn_{K/F}(ad)}{dd^{\sigma} a^{\sigma^2}}& \nnn_{K/F}(a)\end{pmatrix}, \phi\bigg)$$
$$= \sum_{a,d\in K^{\times}, \nnn_{K/F}(a)=1, \nnn_{K/F}(d)=k} \chi\chi_q^+(k) G_{(0,1; \phi)} \bigg(  \begin{pmatrix}
  0& 0\\
   aa^{\sigma} d^{\sigma^2} &1\end{pmatrix}, \phi\bigg)$$
$$\stackrel{ \textrm{ the equality }(\ref{demp3eq1}) }{=} \sum_{a,d\in K^{\times}, \nnn_{K/F}(a)=1,\nnn_{K/F}(d)=k} \chi \chi_q^+(k) \phi\Big(-\nnn_{K/F}(aa^{\sigma} d^{\sigma^2})\Big)$$
$$= \sum_{a,d \in K^{\times}, \nnn_{K/F}(a)=1, \nnn_{K/F}(d)=k} \chi \chi_q^+(k) \phi(-k)$$
$$= \phi(-k) \chi \chi_q^+(k)(q^2 + q+ 1)^2.$$
(3) $$ [\alpha \cdot \pi](\omega) A (x_{00})= [\alpha \cdot \pi](\omega) A(X_{10})= [\alpha\cdot \pi](\omega) A(x_{01})=q^{-2} (q^3-1)(q^2+ q+1).$$
$$ [\alpha \cdot \pi](\omega)A(y_k)=\chi\chi_q^+(k) q^{-2} (q^3-1)(q^2+q+1)=\chi\chi_q^+(k) q^{-2}(q-1)(q^2+q+1)^2.$$
(4) $$\alpha \cdot \pi(\omega) B(x_{00})=q^{-2}\sum_{a,d \in K^{\times}, \nnn_{K/F}(ad)=1, \beta'\in K} \phi(-\nnn_{K/F}(\beta'))=q^{-2}(q^3-1) (q^2+q+1) (-q^2-q)=-q^{-1}(q-1)(q+1)(q^2+q+1)^2.$$
(Since $\sum_{\beta' \neq 0} \phi(-\nnn_{K/F}(\beta')) + q^2+q +1 =0$, we have $\sum_{\beta'\in K} \phi(-\nnn_{K/F}(\beta')) =-q^2-q$).
$$ \alpha \cdot \pi(\omega) B(x_{10}) = q^{-2} \sum_{a,d \in K^{\times}, \nnn_{K/F}(ad)=1} \sum_{\beta'\in K} \phi(-\nnn_{K/F}(\beta')) \phi( \tr_{K/F}( \frac{1}{dd^{\sigma}a^{\sigma^2}} \beta')$$
$$= q^{-2} \sum_{a, d \in K^{\times}, \nnn_{K/F}(ad)=1, \beta' \in K} \phi(-\nnn_{K/F}(dd^{\sigma} a^{\sigma^2} \beta')) \phi( \tr_{K/F}(\beta'))$$
$$=q^{-2} \sum_{\beta' \in K} \sum_{ a, d\in K^{\times}, \nnn_{K/F}(ad)=1} \phi(-\nnn_{K/F}(d)\nnn_{K/F}(\beta')) \phi(\tr_{K/F}(\beta')) \cdots (\star)$$
(i) If $\beta'=0$, $\sum_{a, d\in K^{\times}, \nnn_{K/F}(ad)=1} \phi(-\nnn_{K/F}(d) \nnn_{K/F}(\beta')+ \tr_{K/F}(\beta'))=\sum_{a,d \in K^{\times} , \nnn_{K/F}(ad)=1} 1= (q^3-1) (q^2+q+1)$;\\
(ii) If $\beta'\neq 0$, $$\sum_{a, d\in K^{\times}, \nnn_{K/F}(ad)=1}  \phi\big(-\nnn_{K/F}(d)\nnn_{K/F}(\beta')\big)$$
$$= \sum_{l\in K^{\times}, \nnn_{K/F}(l)=1} \sum_{d\in K^{\times}} \phi(-\nnn_{K/F}(d) \nnn_{K/F}(\beta'))$$
$$= (q^2+q+ 1) (-q^2-q-1) = -(q^2+q+1)^2,$$
so $$(\star)= q^{-2}[ (q^3-1) (q^2+q+1) + \sum_{\beta' \in K^{\times}} - (q^2+q+1)^2 \phi(\tr_{K/F}(\beta'))]$$
$$= q^{-2} [ (q^2+q+1)(q^3-1) + (q^2+q +1)^2]= q^{-2}(q^2+q+1)^2q =q^{-1}(q^2+q+1)^2.$$
\ \\
$$\alpha \cdot \pi(\omega) B(x_{01})=q^{-2} \sum_{a, d\in K^{\times}, \nnn_{K/F}(ad)=1} \sum_{\beta' \in K}\phi(-\nnn_{K/F}(\beta')) \phi(\nnn_{K/F}(a))$$
$$= q^{-2} \sum_{a,d \in K^{\times}, \nnn_{K/F}(ad)=1} \phi(\nnn_{K/F}(a)) (-q^2-q)$$
$$=q^{-2} (q^2+ q+1) (-q^2-q-1)(-q^2-q)=q^{-1}(q+1)(q^2+q+1)^2.$$

\end{document}